\newtheorem{lemma}{Lemma}[section]
\newtheorem{proposition}{Proposition}[section]
\newtheorem{thm}{Theorem}[section]
\newtheorem{definition}{Definition}[section]
\def\text#1{\mbox{\rm #1}}
\DeclarePairedDelimiter{\ceil}{\lceil}{\rceil}
\newcommand{\argmax}{\mathop{\rm argmax}}
\newcommand{\norm}[1]{\|{#1} \|}
\newcommand{\wh}{\widehat}
\newcommand{\fnorm}[1]{\|#1\|_{\rm F}}
\newcommand{\nunorm}[1]{\|#1\|_{\rm N}}
\newcommand{\opnorm}[1]{\|#1\|_{\rm op}}
\newcommand{\rank}{\mathop{\sf rank}}
\newcommand{\Tr}{\mathop{\sf Tr}}
\newcommand{\supp}{{\rm supp}}
\newcommand{\iprod}[2]{\left \langle #1, #2 \right\rangle}
\newcommand{\D}{\mathcal{D}}
\newcommand{\U}{\mathcal{U}}
\newcommand{\TV}{{\sf TV}}
\title{Robust Regression via Mutivariate Regression Depth
}
\author{Chao Gao}
\affil{
University of Chicago
}
\begin{document}
\maketitle

\begin{abstract}
This paper studies robust regression in the settings of Huber's $\epsilon$-contamination models. We consider estimators that are maximizers of multivariate regression depth functions. These estimators are shown to achieve minimax rates in the settings of $\epsilon$-contamination models for various regression problems including nonparametric regression, sparse linear regression, reduced rank regression, etc. We also discuss a general notion of depth function for linear operators that has potential applications in robust functional linear regression.
\smallskip

\textbf{Keywords:} robust statistics, minimax rate, data depth, contamination mode, high-dimensional regression.
\end{abstract}


\section{Introduction}

Regression is probably one of the most important subjects in statistics. The goal is to learn the conditional mean or median of a response $Y\in\mathbb{R}^m$ given a covariate $X\in\mathbb{R}^p$. Its form ranges from classical low-dimensional linear regression to modern nonparametric and high-dimensional models. In this paper, we study robust regression in the setting of Huber's $\epsilon$-contamination model \cite{huber1964robust}. Namely, consider i.i.d. observations
\begin{equation}
(X_1,Y_1),...,(X_n,Y_n)\sim (1-\epsilon)P_B+\epsilon Q.\label{eq:ecm}
\end{equation}
The distribution $P_B$ models the relation between $X$ and $Y$ via the regression parameter $B$, and $Q$ is an unknown contamination distribution. We need to learn the regression parameter $B$. In this setting, there are approximately $\epsilon n$ observations sampled from $Q$ that do not carry any information about $B$. Since we do not know which observation is contaminated or not, a procedure to recover $B$ must be robust. To be specific, this paper covers the following list of robust regression problems:
\begin{enumerate}
\item \textit{Nonparametric Regression.} The relation between $x$ and $y$ is characterized by $y|x\sim N(f(x),1)$ with some nonparametric function $f$. The goal is to estimate $f$ using data sampled from $(1-\epsilon)P_f+\epsilon Q$.
\item \textit{Sparse Linear Regression.} For a scalar response $y$ and a vector covariate $X$, a linear model is specified by $y|X\sim N(\beta^TX,\sigma^2)$, with some regression vector $\beta$ assumed to be sparse. The goal is to estimate $\beta$ with samples from $(1-\epsilon)P_{\beta}+\epsilon Q$.
\item \textit{Gaussian Graphical Model.} In this setting, we observe i.i.d. samples from $(1-\epsilon)N(0,\Omega^{-1})+\epsilon Q$. The goal is to estimate the sparse precision matrix $\Omega$. The sparsity pattern of $\Omega$ characterizes the graphical model of conditional dependence. The Gaussian graphical model is closely related and can be solved by sparse linear regression \citep{meinshausen2006high}.
\item \textit{Low-Rank Trace Regression.} For a scalar response $y$ and a matrix covariate $X$, a linear model is specified by $y|X\sim N(\Tr(B^TX),\sigma^2)$. The regression matrix $B$ is assumed to be low-rank, and the goal is to estimate it with samples from $(1-\epsilon)P_B+\epsilon Q$.
\item \textit{Multivariate Linear Regression}. In this setting, the response is also multivariate. The linear model is specified by $Y|X\sim N(B^TX,\sigma^2I_m)$. The problem is also termed as multi-task learning. We will show that even there is no relation between the $m$ univariate linear models, estimation of the $m$ columns of $B$ must be done in a joint fashion once the samples are from $(1-\epsilon)P_B+\epsilon Q$.
\item \textit{Multivariate Linear Regression with Group Sparsity.} We consider the same model in the last item, and assume that only a subset of the rows of the regression matrix $B$ are nonzero.
\item \textit{Reduced Rank Regression.} In the same setting of multivariate linear regression, we further assume the regression matrix $B$ is low-rank.
\end{enumerate}

Though the seven problems listed are very different, and the regression parameter we want to recover can be a vector, a matrix or even a function, we consider a unified robust estimation procedure in this paper. In the setting of multivariate linear regression, we use $\mathbb{P}$ to denote the joint distribution of $X\in\mathbb{R}^p$ and $Y\in\mathbb{R}^m$. The multivariate regression depth of $B\in\mathbb{R}^{p\times m}$ is defined as
\begin{equation}
\D_{\U}(B,\mathbb{P})=\inf_{U\in\U}\mathbb{P}\left\{\iprod{U^TX}{Y-B^TX}\geq 0\right\},\label{eq:depth}
\end{equation}
for some subset $\U\subset\mathbb{R}^{p\times m}\backslash\{0\}$. The definition of multivariate regression depth in the form of (\ref{eq:depth}) first appeared in \cite{mizera2002depth}. A very similar but earlier definition was proposed in \cite{bern2000multivariate}. When $m=1$, this is reduced to the univariate regression depth in \cite{rousseeuw1999regression}. When observations are sampled from (\ref{eq:ecm}), a robust estimator for $B$ is defined as the maximizer of the empirical depth function. That is, $\wh{B}=\argmax_{B\in\mathcal{B}}\D_{\U}(B,\mathbb{P}_n)$, where $\mathbb{P}_n$ is the empirical measure of (\ref{eq:ecm}). With various choices of $\mathcal{B}$ and $\mathcal{U}$, we are able to estimate the regression parameters of all the seven problems listed above. The error rates are proved to be minimax optimal under the $\epsilon$-contamination model.

The $\epsilon$-contamination model was first proposed by Peter Huber \citep{huber1964robust}. Its properties have been studied by \cite{huber1965robust,huber1973minimax,bickel1984robust,donoho2015variance} among others. Most early works studied $Q$ with some assumptions. Some recent papers considered the $\epsilon$-contamination model with $Q$ allowed to be any distribution. To be specific, \cite{chen2015robust,chen2015general} showed that the minimax rate of recovering a parameter under the $\epsilon$-contamination model takes a unified formula $\mathcal{R}(\epsilon)\asymp \mathcal{R}(0)\vee \omega(\epsilon,\Theta,L)$. In other words, the minimax rate is determined by two terms. The first term $\mathcal{R}(0)$ is the minimax rate without contamination, and $\omega(\epsilon,\Theta,L)$ is an extra term caused by contamination, where $\epsilon$ is the contamination proportion, $\Theta$ is the parameter space, and $L$ is the loss function of the problem. Despite the progress of fundamental limits, efficient algorithms of estimation in $\epsilon$-contamination models are usually very hard to find. A very recent paper \cite{lai2016agnostic} proposed an algorithm for estimating multivariate mean. The error rate is nearly minimax only when the covariance matrix is known. Given the hardness of computational issues, we will study computationally efficient robust regression algorithms under $\epsilon$-contamination models in a separate paper.

Robust regression is a popular subject in statistics. However, most papers studied robust regression without considering an $\epsilon$ fraction of contamination \citep{huber1973robust,siegel1982robust,rousseeuw1984robust,leroy1987robust,fan2014robust}. The paper \cite{loh2015high} considered contamination, but in a different form from (\ref{eq:ecm}). Thus, the performance of many proposed procedures in the literature have not been tested under (\ref{eq:ecm}). An example in \cite{chen2015robust} shows that even procedures with high breakdown points may not achieve the optimal rate of the $\epsilon$-contamination model. Conversely, \cite{chen2015robust} also shows that a good performance under the $\epsilon$-contamination model must imply a high breakdown point. This serves as the main motivation to study robust regression using $\epsilon$-contamination models. Though sparse linear regression and low-rank trace regression have already been studied in \cite{chen2015general} under the $\epsilon$-contamination model, the proposed procedure of \cite{chen2015general} is based on robust testing and thus requires the assumption that the regression vector or matrix must have bounded $\ell_2$ or Frobenius norm. In contrast, the estimator obtained by maximizing the regression depth does not require this assumption to achieve rate-optimality.

The rest of the paper is organized as follows. Section \ref{sec:review} reviews the definition and properties of the multivariate regression depth function. The applications in robust regression with one response variable are studied in Section \ref{sec:uni}. The applications in multivariate robust regression are studied in Section \ref{sec:multi}. Section \ref{sec:disc} discusses some extensions of the results for elliptical distributions. A general notion of regression depth for learning linear operators is also discussed in that section. All technical proofs are given in Section \ref{sec:pf}.

We close this section by introducing the notation used in the paper. For $a,b\in\mathbb{R}$, let $a\vee b=\max(a,b)$ and $a\wedge b=\min(a,b)$. For an integer $m$, $[m]$ denotes the set $\{1,2,...,m\}$. Given a set $S$, $|S|$ denotes its cardinality, and $\mathbb{I}_S$ is the associated indicator function. For two positive sequences $\{a_n\}$ and $\{b_n\}$, the relation $a_n\lesssim b_n$ means that $a_n\leq Cb_n$ for some constant $C>0$, and $a_n\asymp b_n$ if both $a_n\lesssim b_n$ and $b_n\lesssim a_n$ hold. For a vector $v\in\mathbb{R}^p$, $\norm{v}$ denotes the $\ell_2$ norm, $\|v\|_1$ the $\ell_1$ norm and $\supp(v)=\{j\in[p]:v_j\neq 0\}$ is its support. For a matrix $A\in\mathbb{R}^{d_1\times d_2}$, $\rank(A)$ denotes its rank, $\text{vec}(A)$ is its vectorization, $\fnorm{A}=\norm{\text{vec}(A)}$ is the matrix Frobenius norm, $\|A\|_{\ell_1}=\max_{1\leq j\leq d_2}\sum_{i=1}^{d_1}|A_{ij}|$ is the matrix $\ell_1$ norm, and the nuclear norm $\|A\|_{\rm N}$ is its largest singular value. When $A$ is an squared matrix, $\Tr(A)$ denotes its trace. For two matrices $A,B\in\mathbb{R}^{d_1\times d_2}$, their trace inner product is $\iprod{A}{B}=\Tr(AB^T)$. For two probability distributions $P_1$ and $P_2$, their total variation distance is $\TV(P_1,P_2)=\sup_B|P_1(B)-P_2(B)|$. We use $\mathbb{P}$ and $\mathbb{E}$ to denote generic probability and expectation whose distribution is determined from the context.

\section{The Multivariate Regression Depth}\label{sec:review}

For a joint probability distribution $\mathbb{P}$ of $X\in\mathbb{R}^p$ and $Y\in\mathbb{R}^m$, the multivariate regression depth of $B\in\mathbb{R}^{p\times m}$ is define in (\ref{eq:depth}). Even for $m$ independent univariate regression problems, the multivariate regression depth treats the $m$ regression problems in a joint way. Later we will see this is essential to achieve optimal rates in Huber's $\epsilon$-contamination models.

The multivariate regression depth function is a special case of tangent depth defined by \cite{mizera2002depth}. A very closely related definition was considered in \cite{bern2000multivariate}. Many important properties of the multivariate regression depth are discussed in \cite{mizera2002depth}. For example, it is invariant with respect to linear transformation when $\U=\mathbb{R}^{p\times m}\backslash\{0\}$ in the sense that for any invertible $G\in\mathbb{R}^{p\times p}$ and $H\in\mathbb{R}^{m\times m}$,
$$\D_{\U}\Big(B,\mathcal{L}(X,Y)\Big)=\D_{\U}\Big(G^{-1}BH^T,\mathcal{L}(GX,HY)\Big),$$
where $\mathcal{L}(\cdot)$ denotes the law. We refer the readers to \cite{mizera2002depth,bern2000multivariate,rousseeuw1999regression,struyf1999halfspace,amenta2000regression} for other important properties.

The general multivariate regression depth function covers some important cases. 
When $m=1$, it is Rousseeuw and Hubert's univariate regression depth \citep{rousseeuw1999regression},
\begin{equation}
\D_{\U}(\beta,\mathbb{P})=\inf_{u\in\U}\mathbb{P}\left\{u^TX(y-X^T\beta)\geq 0\right\}.\label{eq:uni-depth}
\end{equation}
When $p=1$ and the covariate is $1$, it is Tukey's half-space depth \citep{tukey1975mathematics} for multivariate location estimation,
\begin{equation}
\D_{\U}(\theta,\mathbb{P})=\inf_{u\in\U}\mathbb{P}\left\{u^T(Y-\theta)\geq 0\right\}.\label{eq:tukey-depth}
\end{equation}
The error rate of maximizing Tukey's depth under the $\epsilon$-contamination model was studied by \cite{chen2015general}. Our main results for multivariate regression not only cover univariate regression depth, but also reproduce the result of \cite{chen2015general} for Tukey's depth.

Section \ref{sec:uni} and Section \ref{sec:multi} study the error rates of the estimator
\begin{equation}
\wh{B}=\argmax_{B\in\mathcal{B}}\D_{\U}(B,\mathbb{P}_n)\label{eq:general-estimator}
\end{equation}
for univariate and multivariate regression, respectively. To benchmark our main results, we need to introduce the general minimax lower bound for $\epsilon$-contamination models obtained by \cite{chen2015robust}.
\begin{thm}[Chen, Gao \& Ren (2015) \citep{chen2015robust}]\label{thm:lower}
Given a statistical experiment $\{P_{\theta}:\theta\in\Theta\}$ and a loss function $L(\cdot,\cdot)$,
define
$$\omega(\epsilon,\Theta,L)=\sup\left\{L(\theta_1,\theta_2): \TV(P_{\theta_1},P_{\theta_2})\leq \epsilon/(1-\epsilon); \theta_1,\theta_2\in\Theta\right\}.$$
Suppose there is some $\mathcal{R}(0)$ such that
\begin{equation}
\inf_{\hat{\theta}}\sup_{\theta\in\Theta,Q}\mathbb{P}_{(\epsilon,\theta,Q)}\left\{L(\hat{\theta},\theta)\geq\mathcal{R}(\epsilon)\right\}\geq c\label{eq:lower}
\end{equation}
holds for $\epsilon=0$. Then, (\ref{eq:lower}) also holds for any $\epsilon\in(0,1)$ with $\mathcal{R}(\epsilon)\asymp\mathcal{R}(0)\vee\omega(\epsilon,\Theta)$. The notation $\mathbb{P}_{(\epsilon,\theta,Q)}$ stands for $(1-\epsilon)P_{\theta}+\epsilon Q$.
\end{thm}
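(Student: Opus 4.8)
The plan is to establish the two directions of the rate $\mathcal{R}(\epsilon)\asymp\mathcal{R}(0)\vee\omega(\epsilon,\Theta,L)$ separately, using a reduction-to-testing argument for each. For the $\mathcal{R}(0)$ part, note that the family $\{(1-\epsilon)P_\theta+\epsilon Q:\theta\in\Theta,Q\}$ contains, by taking $Q=P_\theta$, every $P_\theta$ with $\theta\in\Theta$; hence any estimator in the contamination model also works against the uncontaminated model, and the assumed lower bound $\inf_{\hat\theta}\sup_{\theta}\mathbb{P}_\theta\{L(\hat\theta,\theta)\ge\mathcal{R}(0)\}\ge c$ transfers verbatim, giving $\mathcal{R}(\epsilon)\gtrsim\mathcal{R}(0)$. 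The upper-bound direction (that $\mathcal{R}(\epsilon)\lesssim\mathcal{R}(0)\vee\omega$) is not part of this statement — only the lower bound on what any estimator must pay — so I would focus entirely on the $\omega(\epsilon,\Theta,L)$ term.

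For the $\omega$ part, the key observation is a total-variation inequality: if $\TV(P_{\theta_1},P_{\theta_2})\le\epsilon/(1-\epsilon)$, then the contaminated distributions $(1-\epsilon)P_{\theta_1}+\epsilon Q_1$ and $(1-\epsilon)P_{\theta_2}+\epsilon Q_2$ can be made \emph{identical} for suitable choices of $Q_1,Q_2$. Concretely, write $(1-\epsilon)P_{\theta_1}-(1-\epsilon)P_{\theta_2}$ via its Jordan decomposition; since the total mass of the positive part is at most $(1-\epsilon)\cdot\frac{\epsilon}{1-\epsilon}=\epsilon$, one can absorb the discrepancy into the $\epsilon$-budget on each side. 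That is, there exist probability measures $Q_1,Q_2$ with $(1-\epsilon)P_{\theta_1}+\epsilon Q_1=(1-\epsilon)P_{\theta_2}+\epsilon Q_2$. Then the $n$-fold product distributions coincide as well, so no test — and hence no estimator — can distinguish $\theta_1$ from $\theta_2$ in the contamination model. A standard two-point (Le Cam) argument then forces any estimator to suffer loss at least of order $L(\theta_1,\theta_2)$ with constant probability; taking the supremum over all admissible pairs gives the $\omega(\epsilon,\Theta,L)$ lower bound, up to the usual constant factor from the triangle-type inequality on $L$ (or a quasi-metric/approximate-triangle assumption, which I would state explicitly).

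Combining the two directions and using $a\vee b\asymp a+b$ for nonnegative reals yields $\mathcal{R}(\epsilon)\asymp\mathcal{R}(0)\vee\omega(\epsilon,\Theta,L)$, with the hidden constants depending only on $c$ and the triangle-inequality constant of $L$. I would also remark that the modulus $\omega$ is defined with the ``inflated'' radius $\epsilon/(1-\epsilon)$ rather than $\epsilon$ precisely because the contamination acts multiplicatively on $P_\theta$ with weight $1-\epsilon$; this is what makes the matching-measures construction tight.

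I expect the main obstacle to be the matching-measures step — verifying that the TV bound $\epsilon/(1-\epsilon)$ is exactly the threshold at which $Q_1,Q_2$ can be chosen to equalize the two mixtures, and handling the edge cases where $P_{\theta_1},P_{\theta_2}$ are mutually singular or where $\epsilon/(1-\epsilon)\ge 1$. The rest (Le Cam's method, the $\vee$-vs-$+$ equivalence, transferring $\mathcal{R}(0)$) is routine. Since this theorem is quoted from \citep{chen2015robust}, in the paper itself I would simply cite it; the sketch above is what the underlying argument looks like.
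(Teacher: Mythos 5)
Your proposal is correct: the paper itself does not prove Theorem~\ref{thm:lower} but imports it from \citep{chen2015robust}, and your sketch reproduces the argument of that source — embedding the uncontaminated model via $Q=P_\theta$ to transfer $\mathcal{R}(0)$, and the matching-mixtures construction (absorbing the Jordan decomposition of $(1-\epsilon)(P_{\theta_1}-P_{\theta_2})$ into the $\epsilon$-budget when $\TV(P_{\theta_1},P_{\theta_2})\leq\epsilon/(1-\epsilon)$) followed by a two-point Le Cam bound for the $\omega(\epsilon,\Theta,L)$ term. Your caveats (quasi-triangle condition on $L$, the edge case $\epsilon/(1-\epsilon)\geq 1$) are the right ones, and citing the result, as you suggest, is exactly what the paper does.
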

Theorem \ref{thm:lower} gives a general minimax lower bound for parameter estimation in the settings of $\epsilon$-contamination models. The quantity $\omega(\epsilon,\Theta,L)$ is called modulus of continuity \citep{donoho1991geometrizing}, which characterizes the ability of a loss function $L(\cdot,\cdot)$ to distinguish between two parameters whose corresponding probability distributions are $\epsilon/(1-\epsilon)$ close in total variation distance. The rate $\mathcal{R}(\epsilon)\asymp\mathcal{R}(0)\vee\omega(\epsilon,\Theta,L)$ is the best possible one that can be achieved by any procedure. For many loss functions, $\omega(\epsilon,\Theta,L)$ is at the order of $\epsilon^2$. We will show that the estimator induced by the multivariate depth function is able to achieve the rate $\mathcal{R}(\epsilon)\asymp\mathcal{R}(0)\vee\omega(\epsilon,\Theta,L)$ for all the seven regression problems considered in the paper.

\section{Applications of Regression Depth} \label{sec:uni}

\subsection{Nonparametric Regression}

Consider the nonparametric regression model $y=f(x)+z$. To be specific, we use the distribution $P_f$ to denote the sampling process that first sample $x\sim\text{Unif}[0,1]$ and then sample $y|x\sim N(f(x),1)$.
 The regression function admits the expansion $f(x)=\sum_{j=1}^{\infty}\beta_j\phi_j(x)$, where $\{\phi_j\}_{j=1}^{\infty}$ is the Fourier basis on $L^2[0,1]$. We assume the true regression function belongs to the following Sobolev ball:
$$S_{\alpha}(M)=\left\{f=\sum_{j=1}^{\infty}\beta_j\phi_j:\sum_{j=1}^{\infty}j^{2\alpha}\beta_j^2\leq M^2\right\}.$$
The smoothness parameter $\alpha>0$ and radius $M>0$ are assumed as constants throughout the section.

Define the vector of infinite size $X=\{\phi_j(x)\}_{j\in[\infty]}\in\mathbb{R}^{\infty}$. Then, the model becomes $y=\beta^TX+z$. Recovery of $f$ is equivalent to recovery of $\beta\in\mathbb{R}^{\infty}$. Define
$$\U_k=\left\{u\in\mathbb{R}^{\infty}\backslash\{0\}: u_j=0\text{ for all }j>k\right\}.$$
We use the univariate regression depth (\ref{eq:uni-depth}) to estimate the Fourier coefficients $\beta$ by
\begin{equation}
\hat{\beta}=\argmax_{\beta\in\U_k}\D_{\U_{k}}(\beta,\{(X_i,y_i)\}_{i=1}^n).\label{eq:nonpar-est}
\end{equation}
To be specific, the empirical regression depth for this problem is
$$\D_{\U_{k}}(\beta,\{(X_i,y_i)\}_{i=1}^n)=\inf_{u\in\U_k}\frac{1}{n}\sum_{i=1}^n\mathbb{I}\left\{\left(\sum_{j=1}^{\infty}u_j\phi_j(x_i)\right)\left(y_i-\sum_{j=1}^{\infty}\beta_j\phi_j(x_i)\right)\geq 0\right\}.$$
Since the regression function is in the space $S_{\alpha}(M)$, we expect that $\beta_j$'s are negligible for high frequencies, and thus the regression depth does not need to involve frequencies after some level $k$.

We first give a result for the uniform convergence of the empirical regression depth.

\begin{proposition}\label{prop:nonpar}
For any probability measure $\mathbb{P}$ and its associated empirical measure $\mathbb{P}_n$, we have for any $\delta>0$,
$$\sup_{\beta\in\U_k}\left|\D_{\U_{k}}(\beta,\mathbb{P}_n)-\D_{\U_{k}}(\beta,\mathbb{P})\right|\leq C\sqrt{\frac{k}{n}}+\sqrt{\frac{\log(1/\delta)}{2n}},$$
with probability at least $1-2\delta$, where $C>0$ is some absolute constant.
\end{proposition}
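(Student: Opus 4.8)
The plan is to bound the supremum (over $\beta$) of a difference of two infima (over $u$) by a single uniform deviation of the empirical measure over one fixed class of events, and then feed that class into the classical VC/empirical‑process machinery together with a bounded‑differences concentration step. Note that nothing about $\mathbb{P}$ will be used, which matches the statement being for an arbitrary probability measure. Concretely, for $\mathbb{Q}\in\{\mathbb{P},\mathbb{P}_n\}$ write $\D_{\U_k}(\beta,\mathbb{Q})=\inf_{u\in\U_k}\mathbb{Q}(E_{\beta,u})$ with $E_{\beta,u}=\{(X,y):u^TX(y-X^T\beta)\ge 0\}$. Using the elementary inequality $|\inf_u f(u)-\inf_u g(u)|\le\sup_u|f(u)-g(u)|$ pointwise in $\beta$ gives
$$\sup_{\beta\in\U_k}\big|\D_{\U_k}(\beta,\mathbb{P}_n)-\D_{\U_k}(\beta,\mathbb{P})\big|\ \le\ \sup_{E\in\mathcal{E}}\big|\mathbb{P}_n(E)-\mathbb{P}(E)\big|,\qquad \mathcal{E}=\{E_{\beta,u}:\beta,u\in\U_k\}.$$
Since $\beta,u\in\U_k$ have only their first $k$ coordinates free and $\phi_j$ enters the indicator only for $j\le k$, each $E_{\beta,u}$ depends on the data point only through the finite vector $\tilde X=(\phi_1(x),\dots,\phi_k(x))\in\mathbb{R}^k$ and $y\in\mathbb{R}$, so $\mathcal{E}$ is a class of subsets of $\mathbb{R}^{k+1}$.

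The heart of the argument is to show $\mathcal{E}$ has VC dimension $O(k)$. Writing $a=u^T\tilde X$ and $b=y-\beta^T\tilde X$, one has $E_{\beta,u}=\{a\ge 0,\,b\ge 0\}\cup\{a\le 0,\,b\le 0\}$, i.e.\ a \emph{fixed} Boolean combination of at most four sets taken from two families: the homogeneous half‑spaces $\{\tilde X:u^T\tilde X\ge 0\}$ in $\mathbb{R}^k$ (VC dimension $\le k$), and the half‑spaces $\{(\tilde X,y):y\ge\beta^T\tilde X\}$ in $\mathbb{R}^{k+1}$ (a subfamily of homogeneous half‑spaces in $\mathbb{R}^{k+1}$, so VC dimension $\le k+1$). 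By the standard bound on the VC dimension of a class obtained by applying a fixed Boolean formula to a bounded number of VC classes, $\mathcal{E}$ has VC dimension at most $C_0 k$ for an absolute constant $C_0$. This step is where the rate is won or lost: the crude observation that $(X,y)\mapsto u^TX(y-X^T\beta)$ ranges over a linear space of functions of dimension $O(k^2)$ would only give VC dimension $O(k^2)$ and hence the suboptimal rate $k/\sqrt n$; recognizing $E_{\beta,u}$ as a Boolean combination of \emph{half-spaces} (VC dimension linear in $k$) is precisely what yields the $\sqrt{k/n}$ rate. I expect this to be the only genuinely delicate point.

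It then remains to combine a VC expectation bound with concentration. By the Vapnik–Chervonenkis bound on the expected supremum of an empirical process indexed by a VC class (symmetrization plus Dudley's entropy integral), $\mathbb{E}\,\sup_{E\in\mathcal{E}}|\mathbb{P}_n(E)-\mathbb{P}(E)|\le C\sqrt{k/n}$ for an absolute $C$. Replacing a single one of the $n$ observations changes each of the one‑sided quantities $\sup_{E\in\mathcal{E}}(\mathbb{P}_n(E)-\mathbb{P}(E))$ and $\sup_{E\in\mathcal{E}}(\mathbb{P}(E)-\mathbb{P}_n(E))$ by at most $1/n$, so McDiarmid's bounded‑differences inequality gives, for each of these two suprema and any $\delta>0$, a deviation above its mean of at most $\sqrt{\log(1/\delta)/(2n)}$ with probability $\ge 1-\delta$. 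A union bound over the two signs, together with the expectation bound and Step 1, yields the claimed inequality with probability $\ge 1-2\delta$. Steps 1 and 3 are routine; essentially all the content is in the linear‑in‑$k$ VC bound of the middle step.
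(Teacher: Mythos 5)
Your argument is correct, and its skeleton matches the paper's: the $\lvert\inf_u-\inf_u\rvert\le\sup_u$ reduction to a single uniform deviation $\sup_{E\in\mathcal{E}}\lvert\mathbb{P}_n(E)-\mathbb{P}(E)\rvert$, then a VC expectation bound (Lemma~\ref{lem:VC}) plus bounded differences (Lemma~\ref{lem:bdd}); your two one-sided applications of McDiarmid with a union bound deliver exactly the stated $1-2\delta$. Where you genuinely diverge is the linear-in-$k$ VC bound. The paper writes $u^TX(y-X^T\beta)=\Tr(WZ^T)$ with a block matrix $W=W(u,\beta)$ of low rank as in (\ref{eq:vector-W}) and invokes Lemma~\ref{lem:VCrank}, a Warren-type bound on sign patterns of degree-two polynomials in $r(d_1+d_2)$ variables, to conclude the class (\ref{eq:A}) has VC dimension $O(k)$. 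You instead use that, for univariate response, the defining inequality factors as a product of two linear forms, so $E_{\beta,u}=\{u^T\tilde X\ge 0,\,y-\beta^T\tilde X\ge 0\}\cup\{u^T\tilde X\le 0,\,y-\beta^T\tilde X\le 0\}$ is a fixed Boolean combination of half-spaces in $\mathbb{R}^k$ and $\mathbb{R}^{k+1}$, and then apply the standard bound that such combinations of finitely many VC classes have VC dimension of order the sum of the dimensions. Both routes give $O(k)$ and hence the $\sqrt{k/n}$ rate; yours is more elementary and self-contained for this proposition, while the paper's low-rank trace representation is what extends uniformly to the multivariate depth ($m>1$), where $\iprod{U^TX}{Y-B^TX}$ no longer factors into two linear forms and your half-space decomposition would not apply (see Propositions~\ref{prop:multiple}, \ref{prop:group} and \ref{prop:reduced}). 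In that sense the paper's heavier Lemma~\ref{lem:VCrank} buys generality across all seven problems, whereas your argument buys simplicity for the univariate-response cases.
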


Using this result, we can study the convergence rate of the estimator (\ref{eq:nonpar-est}) in the setting of the $\epsilon$-contamination model. Namely, consider i.i.d. observations from $\mathbb{P}_{(\epsilon,f,Q)}=(1-\epsilon)P_f+\epsilon Q$.

\begin{thm}\label{thm:nonpar}
Consider the estimator $\hat{f}=\sum_j\hat{\beta}_j\phi_j$ with $k=\ceil{n^{\frac{1}{2\alpha+1}}}$. Assume that $\epsilon^2+n^{-\frac{2\alpha}{2\alpha+1}}$ is sufficiently small. Then, we have
$$\|\hat{f}-f\|^2=\int_0^1(\hat{f}(x)-f(x))^2dx\leq C\left(n^{-\frac{2\alpha}{2\alpha+1}}\vee\epsilon^2\right),$$
with $\mathbb{P}_{(\epsilon,f,Q)}$-probability at least $1-\exp\left(-C'(n^{\frac{1}{2\alpha+1}}+n\epsilon^2)\right)$ uniformly over all $Q$ and $f\in S_{\alpha}(M)$, where $C,C'$ are some absolute constants.
\end{thm}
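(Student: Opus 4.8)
The plan is to follow the standard two-step template for analyzing depth-based estimators under $\epsilon$-contamination. First I would establish a population-level identification result: for the uncontaminated model $P_f$ with $x\sim\text{Unif}[0,1]$ and $y\mid x\sim N(f(x),1)$, the true projection $\beta^{(k)}=(\beta_1,\dots,\beta_k,0,\dots)$ nearly maximizes $\D_{\U_k}(\cdot,P_f)$, and moreover any $\gamma\in\U_k$ whose depth is close to the maximal value must be close to $\beta^{(k)}$ in the relevant $\ell_2$ sense. Concretely, for $\gamma\in\U_k$ write $v=\gamma-\beta^{(k)}\in\U_k$; then $y-\gamma^T X = z - v^T X + r(x)$ where $z\sim N(0,1)$ is independent of $x$ and $r(x)=f(x)-\sum_{j\le k}\beta_j\phi_j(x)$ is the tail of the Sobolev expansion, with $\|r\|^2\le M^2 k^{-2\alpha}\asymp n^{-2\alpha/(2\alpha+1)}$. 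Taking $u=v$ in the infimum defining the depth, one gets $\D_{\U_k}(\gamma,P_f)\le P_f\{ (v^TX)(z - v^TX + r(x))\ge 0\}$, and I would lower-bound the "deficiency" $\tfrac12 - \D_{\U_k}(\gamma,P_f)$ from below by a quantity comparable to $\|v\|\wedge \|v\|^2$ (after controlling the contribution of $r$), up to the approximation error. The key analytic input here is that for fixed direction $v$, conditionally on $x$ the sign of $z - v^TX + r(x)$ is governed by a Gaussian, so $P\{(v^TX)(z-v^TX+r(x))\ge 0\mid x\}=\tfrac12 - \text{(something monotone in } |v^TX|/1)$ roughly; integrating over $x$ and using that $\Expect (v^TX)^2=\|v\|^2$ (orthonormality of $\{\phi_j\}$) yields the curvature bound. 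This is essentially the regression-depth analogue of the Tukey-depth argument in \cite{chen2015general}, adapted to the presence of the bias term $r$.

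Second, I would transfer this to the empirical contaminated measure. By Proposition \ref{prop:nonpar}, with probability at least $1-\exp(-C'(k+n\epsilon^2))$ — choosing $\delta$ so that $\log(1/\delta)\asymp k+n\epsilon^2$, which makes both terms $\sqrt{k/n}$ and $\sqrt{\log(1/\delta)/(2n)}$ of order $\sqrt{(k+n\epsilon^2)/n}\asymp \sqrt{k/n}\vee\epsilon$ — we have $\sup_{\beta\in\U_k}|\D_{\U_k}(\beta,\mathbb{P}_n)-\D_{\U_k}(\beta,\mathbb{P}_{(\epsilon,f,Q)})|\le C(\sqrt{k/n}+\epsilon)$. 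Combined with the elementary inequality $|\D_{\U_k}(\beta,\mathbb{P}_{(\epsilon,f,Q)})-\D_{\U_k}(\beta,P_f)|\le\epsilon$ (since $\D_{\U_k}$ is an infimum of probabilities and $\TV(\mathbb{P}_{(\epsilon,f,Q)},P_f)\le\epsilon$), the empirical depth is uniformly within $C(\sqrt{k/n}+\epsilon)$ of the population depth $\D_{\U_k}(\cdot,P_f)$. Since $\hat\beta$ maximizes the empirical depth and $\beta^{(k)}$ is feasible, a chaining of these inequalities gives $\D_{\U_k}(\hat\beta,P_f)\ge \D_{\U_k}(\beta^{(k)},P_f)-C(\sqrt{k/n}+\epsilon)\ge \tfrac12 - C'(\sqrt{k/n}+\epsilon+\|r\|)$. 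Feeding this into the curvature bound from step one yields $\|\hat\beta-\beta^{(k)}\|\wedge\|\hat\beta-\beta^{(k)}\|^2 \lesssim \sqrt{k/n}+\epsilon+\|r\|$. With $k\asymp n^{1/(2\alpha+1)}$ we have $\sqrt{k/n}\asymp n^{-\alpha/(2\alpha+1)}$ and $\|r\|\lesssim n^{-\alpha/(2\alpha+1)}$, so the right side is $\lesssim n^{-\alpha/(2\alpha+1)}\vee\epsilon$; under the smallness assumption on $\epsilon^2+n^{-2\alpha/(2\alpha+1)}$ this forces the quadratic branch, giving $\|\hat\beta-\beta^{(k)}\|^2\lesssim n^{-2\alpha/(2\alpha+1)}\vee\epsilon^2$. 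Finally $\|\hat f - f\|^2 = \|\hat\beta-\beta^{(k)}\|^2 + \|r\|^2 \lesssim n^{-2\alpha/(2\alpha+1)}\vee\epsilon^2$ by Parseval, which is the claimed bound, and the probability statement carries over from Proposition \ref{prop:nonpar} with the above choice of $\delta$.

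I expect the main obstacle to be step one — establishing the quadratic curvature of the population depth around the truth while simultaneously accounting for the approximation bias $r(x)$. Without the bias term this is a clean Gaussian computation, but $r$ enters the sign event $(v^TX)(z-v^TX+r(x))\ge 0$ in a way that is correlated with $v^TX$ through $x$; the argument needs $\|r\|_\infty$ or at least $\|r\|_2$ to be small enough that its effect on the depth is a lower-order additive perturbation of size $O(\|r\|)$, and one must be careful that the "$\wedge$" structure (linear-vs-quadratic behavior of the depth deficiency in $\|v\|$) is not destroyed. A secondary, more routine point is verifying that Proposition \ref{prop:nonpar} applies uniformly over the infinite-dimensional $\U_k$ despite the $\argmax$ in \eqref{eq:nonpar-est} being over $\U_k$ as well — but since only the first $k$ coordinates matter for any $\beta\in\U_k$, the effective dimension is $k$ and the proposition is stated exactly for this situation.
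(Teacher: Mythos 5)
Your overall architecture matches the paper's proof: truncate to the projection $\beta_{[k]}$ with bias $\|r\|\lesssim k^{-\alpha}$, apply Proposition \ref{prop:nonpar} with $\log(1/\delta)\asymp k+n\epsilon^2$, conclude that the depth deficiency $\tfrac12-\D_{\U_k}(\hat\beta,P_f)$ is of order $\epsilon+\sqrt{k/n}+k^{-\alpha}$, convert this to a parameter bound by a population curvature argument, and finish with Parseval and $k=\ceil{n^{1/(2\alpha+1)}}$. Your handling of contamination via $|\D_{\U_k}(\beta,\mathbb{P}_{(\epsilon,f,Q)})-\D_{\U_k}(\beta,P_f)|\leq\epsilon$ together with uniform convergence to the contaminated measure is a legitimate (arguably cleaner) substitute for the paper's sample-splitting and counting argument based on Lemma \ref{lem:ratio}. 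The genuine gap is in the curvature step, which determines the rate. You assert that the deficiency is bounded below by a quantity ``comparable to $\|v\|\wedge\|v\|^2$'' and then claim the ``quadratic branch'' yields $\|\hat\beta-\beta^{(k)}\|^2\lesssim n^{-2\alpha/(2\alpha+1)}\vee\epsilon^2$. That deduction fails: from $\|v\|\wedge\|v\|^2\lesssim\delta_n$ with $\delta_n\asymp n^{-\alpha/(2\alpha+1)}\vee\epsilon$, the quadratic branch gives only $\|v\|^2\lesssim\delta_n$, i.e.\ the square root of the target rate. What is actually needed, and what the paper proves (Lemma \ref{lem:curve}, applied to $\hat\beta-\beta_{[k]}\in\U_k$ after the bias-transfer step around (\ref{eq:usesob})), is a \emph{linear} curvature: once the deficiency $\eta$ is below a fixed constant, one has $\|v\|\leq C\eta$; then $\|v\|\lesssim\delta_n$ and squaring gives $n^{-2\alpha/(2\alpha+1)}\vee\epsilon^2$.

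Relatedly, your sketch of how to establish the curvature---``integrating over $x$ and using that $\mathbb{E}(v^TX)^2=\|v\|^2$''---is not sufficient. After conditioning on $x$ and using the Gaussian noise, the deficiency behaves like a constant times $\mathbb{E}\min\{c,|v^TX|\}$, and a second-moment identity alone cannot lower bound this linearly in $\|v\|$; one needs some anti-concentration or higher-moment control of $v^TX$ uniformly over directions $v\in\U_k$. This is precisely what the paper supplies: Gaussianity of the design in Lemma \ref{lem:curve}, and a Paley--Zygmund-type fourth-moment argument in Lemma \ref{lem:curve-m}. You correctly identified this curvature-with-bias step as the main obstacle, but the proposal leaves it unproved (and, as written, with the wrong functional form), so the argument as it stands does not deliver the claimed bound.
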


The rate consists of two terms. The first term $n^{-\frac{2\alpha}{2\alpha+1}}$ is the classical minimax rate for nonparametric function estimation in the space $S_{\alpha}(M)$. See \cite{tsybakov2008introduction,johnstone2011gaussian} for details. The second term $\epsilon^2$ characterizes the influence of contamination. It is not hard to check that the modulus of continuity for the loss $\|\cdot\|^2$ is of order $\epsilon^2$. Thus, the rate $n^{-\frac{2\alpha}{2\alpha+1}}\vee\epsilon^2$ is minimax optimal by Theorem \ref{thm:lower}.

Given that the minimax rate is $n^{-\frac{2\alpha}{2\alpha+1}}\vee\epsilon^2$, a necessary and sufficient condition to achieve the rate $n^{-\frac{2\alpha}{2\alpha+1}}$ as if there is no contamination is $\epsilon\lesssim n^{-\frac{\alpha}{2\alpha+1}}$. Hence, in order to achieve the minimax rate for $\epsilon=0$, a rate-optimal robust estimator can tolerate at most $n\epsilon\lesssim n^{\frac{\alpha+1}{2\alpha+1}}$ contaminated observations. The number $n^{-\frac{\alpha}{2\alpha+1}}$ can be interpreted as the order of the minimax-rate breakdown point, because the minimax rate will change from $n^{-\frac{2\alpha}{2\alpha+1}}$ to $\epsilon^2$ as soon as $\epsilon\gtrsim n^{-\frac{\alpha}{2\alpha+1}}$. It is interesting to note that a larger $\alpha$ implies a smaller order of $n^{-\frac{\alpha}{2\alpha+1}}$.

\subsection{Sparse Linear Regression}\label{sec:sparse}

Consider the sparse linear regression model, where the response and covariate are linked by the equation $y=\beta^TX+\sigma z$.
The regression vector $\beta$ is assumed to belong to the sparse set:
\begin{equation}
\Theta_s=\left\{\beta\in\mathbb{R}^p\backslash\{0\}: \sum_{j=1}^p\mathbb{I}\{\beta_j\neq 0\}\leq s\right\}.\label{eq:sparse-set}
\end{equation}
The joint distribution $(X,y)\sim P_{\beta}$ is specified by the sampling process $X\sim N(0,\Sigma)$ and $y|X\sim N(\beta^TX,\sigma^2)$. For simplicity of notation, we suppress the dependence on $\Sigma$ and $\sigma^2$ for $P_{\beta}$.

Using the univariate regression depth function (\ref{eq:uni-depth}), we define a sparse estimator by
\begin{equation}
\hat{\beta}=\argmax_{\beta\in\Theta_s}\D_{\Theta_{2s}}(\beta,\{(X_i,y_i)\}_{i=1}^n).\label{eq:sparse-est}
\end{equation}
We take advantage of the sparsity of the problem by setting $\U=\Theta_{2s}$ and $\mathcal{B}=\Theta_s$ in (\ref{eq:general-estimator}). For this sparse regression depth, its uniform convergence property is given by the following proposition.

\begin{proposition}\label{prop:sparse}
For any probability measure $\mathbb{P}$ and its associated empirical measure $\mathbb{P}_n$, we have for any $\delta>0$,
$$\sup_{\beta\in\Theta_s}\left|\D_{\Theta_{2s}}(\beta,\mathbb{P}_n)-\D_{\Theta_{2s}}(\beta,\mathbb{P})\right|\leq C\sqrt{\frac{s\log\left(\frac{ep}{s}\right)}{n}}+\sqrt{\frac{\log(1/\delta)}{2n}},$$
with probability at least $1-2\delta$, where $C>0$ is some absolute constant.
\end{proposition}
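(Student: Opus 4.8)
The first move is to replace the statement about depth functions by a single uniform deviation bound for an empirical process. For a probability measure $\mathbb{Q}$ and for $u,\beta$, set $A_{u,\beta}=\{(x,y):u^Tx\,(y-x^T\beta)\ge 0\}$, so that $\D_{\Theta_{2s}}(\beta,\mathbb{Q})=\inf_{u\in\Theta_{2s}}\mathbb{Q}(A_{u,\beta})$. Applying the elementary inequality $|\inf_u f(u)-\inf_u g(u)|\le\sup_u|f(u)-g(u)|$ with $f(u)=\mathbb{P}_n(A_{u,\beta})$, $g(u)=\mathbb{P}(A_{u,\beta})$, and then taking the supremum over $\beta\in\Theta_s$, one gets
$$\sup_{\beta\in\Theta_s}\left|\D_{\Theta_{2s}}(\beta,\mathbb{P}_n)-\D_{\Theta_{2s}}(\beta,\mathbb{P})\right|\ \le\ \sup_{A\in\mathcal{A}}\left|\mathbb{P}_n(A)-\mathbb{P}(A)\right|\ =:\ W,$$
with $\mathcal{A}=\{A_{u,\beta}:u\in\Theta_{2s},\,\beta\in\Theta_s\}$. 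It therefore suffices to control the empirical process $W$ over $\mathcal{A}$.

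The key structural observation is that $A_{u,\beta}$ depends on $(x,y)$ only through the coordinates of $x$ in $\supp(u)\cup\supp(\beta)$, a set of cardinality at most $3s$. Hence $\mathcal{A}=\bigcup_{S}\mathcal{A}_S$, the union running over the $\binom{p}{3s}$ index sets $S\subseteq[p]$ with $|S|=3s$, where $\mathcal{A}_S$ collects those $A_{u,\beta}$ supported on $S$. Fix such an $S$. On $\mathbb{R}^{|S|}\times\mathbb{R}$, a space of dimension $3s+1$, the inequality $u^Tx(y-x^T\beta)\ge 0$ asserts that the product of the two linear forms $x\mapsto u^Tx$ and $(x,y)\mapsto y-x^T\beta$ is nonnegative, so $A_{u,\beta}=(H_1\cap H_2)\cup(H_1^c\cap H_2^c)$ for two homogeneous halfspaces $H_1,H_2$. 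The class of homogeneous halfspaces in $\mathbb{R}^{3s+1}$ has VC dimension at most $3s+1$, and a \emph{fixed} Boolean combination of a bounded number of members of VC classes is again a VC class whose dimension is of the same order — with no extra logarithmic factor, since the number of halfspaces combined is a constant. Thus $\mathcal{A}_S$ is a VC class of dimension $O(s)$. By symmetrization and the entropy bound for VC classes, $\mathbb{E}\,Z_S\le C_0\sqrt{s/n}$ for $Z_S:=\sup_{A\in\mathcal{A}_S}|\mathbb{P}_n(A)-\mathbb{P}(A)|$; moreover $Z_S$ changes by at most $1/n$ when a single observation is modified, so McDiarmid's inequality gives $Z_S\le \mathbb{E}\,Z_S+\sqrt{\log(1/\delta')/(2n)}$ with probability at least $1-\delta'$.

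It remains to recombine the subclasses. Taking $\delta'=\delta/\binom{p}{3s}$ and a union bound over the $\binom{p}{3s}$ sets $S$ shows that with probability at least $1-\delta$,
$$W=\max_S Z_S\ \le\ C_0\sqrt{\frac{s}{n}}+\sqrt{\frac{\log\binom{p}{3s}+\log(1/\delta)}{2n}}.$$
Since $\log\binom{p}{3s}\le 3s\log\!\big(ep/(3s)\big)\le 3s\log(ep/s)$ and $\sqrt{a+b}\le\sqrt a+\sqrt b$, the right-hand side is at most $C\sqrt{s\log(ep/s)/n}+\sqrt{\log(1/\delta)/(2n)}$, which is (at least as strong as) the asserted bound; a slightly lossier routing — McDiarmid applied directly to $W$ together with a separate bound on $\mathbb{E} W$ via the same max-over-$S$ argument — reproduces exactly the stated $1-2\delta$ form.

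I expect the only genuinely nonroutine point to be the VC-dimension estimate for $\mathcal{A}_S$, i.e. checking that the "product of two linear forms" sets do not have VC dimension growing faster than $O(s)$; this is precisely where it matters that only two halfspaces are intersected, so that no spurious $\log s$ factor appears. The rest — the reduction to an empirical process, the symmetrization/chaining that yields the $\sqrt{s/n}$ term, and the union over coordinate subsets that yields the sparsity factor $s\log(ep/s)$ — is standard bookkeeping.
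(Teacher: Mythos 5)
Your proof is correct, and its skeleton is the same as the paper's: reduce the depth deviation to an empirical process over the sets $\{u^Tx(y-x^T\beta)\ge 0\}$ via $|\inf_u f-\inf_u g|\le\sup_u|f-g|$, decompose by sparse supports, bound the per-support VC dimension by $O(s)$, control the expectation by the standard VC bound, apply McDiarmid, and union bound over the $\binom{p}{O(s)}$ supports to pick up the $s\log(ep/s)$ factor (the paper indexes supports by a pair $(S_1,S_2)$ with $|S_1|=s,|S_2|=2s$ rather than a single set of size $3s$, which is immaterial). The one step you handle genuinely differently is the VC-dimension estimate, which you correctly identify as the only nonroutine point: the paper writes the indicator as $\mathbb{I}\{\Tr(WZ^T)\ge 0\}$ with $W=W(u,\beta)$ of bounded rank and invokes Warren's bound on sign patterns of low-degree polynomials (its Lemma 6.3), whereas you view the event as a fixed Boolean combination of two homogeneous halfspaces in $\mathbb{R}^{3s+1}$ and use that a constant-size Boolean combination of VC classes has VC dimension of the same order (via the product bound on growth functions, so indeed no extra logarithm). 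Your route is more elementary and entirely adequate here, because with a univariate response the relevant quantity is literally a product of two linear forms; the paper's trace/Warren formulation is what lets the same lemma cover the multivariate and low-rank cases (Propositions 4.1--4.3 and 3.3), where $\iprod{U^TX}{Y-B^TX}$ is a low-rank bilinear form rather than a product of two linear forms, so your argument would not transfer there without modification. One cosmetic fix: $\{uv\ge 0\}$ is $\{u\ge0,v\ge0\}\cup\{u\le 0,v\le 0\}$ rather than $(H_1\cap H_2)\cup(H_1^c\cap H_2^c)$ (the open complements miss boundary points such as $u^Tx=0$, $y-x^T\beta<0$); writing the second piece with the reflected closed halfspaces leaves your VC argument, and hence the conclusion, unchanged.
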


Before giving the convergence rate of (\ref{eq:sparse-est}), we need to define the following quantity:
$$\kappa=\inf_{|\supp(v)|=2s}\frac{\|\Sigma^{1/2}v\|}{\|v\|}.$$
This is called restricted eigenvalue in sparse linear regression literature. Now we are ready for the main results. Consider i.i.d. observations from $\mathbb{P}_{(\epsilon,\beta,Q)}=(1-\epsilon)P_{\beta}+\epsilon Q$.

\begin{thm}\label{thm:sparse}
Consider the estimator $\hat{\beta}$.
Assume that $\epsilon^2+\frac{s\log\left(\frac{ep}{s}\right)}{n}$ is sufficiently small. Then, we have
\begin{eqnarray}
\label{eq:sparse-pred}\|\hat{\beta}-\beta\|_{\Sigma}^2=\|\Sigma^{1/2}(\hat{\beta}-\beta)\|^2 &\leq& C\sigma^2\left(\frac{s\log\left(\frac{ep}{s}\right)}{n}\vee\epsilon^2\right), \\
\label{eq:sparse-l2}\|\hat{\beta}-\beta\|^2 &\leq& C\frac{\sigma^2}{\kappa^2}\left(\frac{s\log\left(\frac{ep}{s}\right)}{n}\vee\epsilon^2\right), \\
\label{eq:sparse-l1}\|\hat{\beta}-\beta\|_1^2 &\leq& C\frac{\sigma^2}{\kappa^2}\left(\frac{s^2\log\left(\frac{ep}{s}\right)}{n}\vee s\epsilon^2\right),
\end{eqnarray}
with $\mathbb{P}_{(\epsilon,\beta,Q)}$-probability at least $1-\exp\left(-C'\left(s\log\left(\frac{ep}{s}\right)+n\epsilon^2\right)\right)$ uniformly over all $Q$ and $\beta\in\Theta_s$, where $C,C'$ are some absolute constants.
\end{thm}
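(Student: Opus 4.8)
The plan is to follow the standard two-step recipe for analyzing depth-based estimators under Huber contamination: first establish a deterministic separation inequality that controls the estimation error in terms of how well the empirical depth is maximized, then combine it with the uniform convergence bound from Proposition \ref{prop:sparse} and a lower bound on the population depth at the true parameter. Concretely, write $\mathbb{P}=\mathbb{P}_{(\epsilon,\beta,Q)}$ and let $\wh\beta$ be the maximizer. By definition $\D_{\Theta_{2s}}(\wh\beta,\mathbb{P}_n)\geq \D_{\Theta_{2s}}(\beta,\mathbb{P}_n)$, so applying Proposition \ref{prop:sparse} twice gives $\D_{\Theta_{2s}}(\wh\beta,\mathbb{P})\geq \D_{\Theta_{2s}}(\beta,\mathbb{P})-2\gamma_n$ where $\gamma_n=C\sqrt{s\log(ep/s)/n}+\sqrt{\log(1/\delta)/2n}$. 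The first key step is to lower bound $\D_{\Theta_{2s}}(\beta,\mathbb{P})$: since under $P_\beta$ we have $u^TX(y-X^T\beta)=u^TX\cdot\sigma z$ with $z\sim N(0,1)$ independent of $X$, the probability that this is nonnegative is exactly $1/2$ for every $u$, so $\D_{\Theta_{2s}}(\beta,P_\beta)=1/2$; the contamination can reduce this by at most $\epsilon$, giving $\D_{\Theta_{2s}}(\beta,\mathbb{P})\geq 1/2-\epsilon$. Hence $\D_{\Theta_{2s}}(\wh\beta,\mathbb{P})\geq 1/2-\epsilon-2\gamma_n$.

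The second and main step is the converse direction: show that if $\beta'\in\Theta_s$ is such that $\|\Sigma^{1/2}(\beta'-\beta)\|$ is large, then $\D_{\Theta_{2s}}(\beta',\mathbb{P})$ must be strictly less than $1/2-\epsilon-2\gamma_n$, so that $\wh\beta$ cannot be far from $\beta$. For this I would pick a specific witness direction $u\in\Theta_{2s}$: the natural choice is $u=\beta'-\beta$, which lies in $\Theta_{2s}$ because $\beta,\beta'\in\Theta_s$. Then $u^TX(y-X^T\beta')=u^TX(\sigma z - u^TX)$ where now $u^TX\sim N(0,\|\Sigma^{1/2}u\|^2)$, and I need to bound $P_\beta\{u^TX(\sigma z-u^TX)\geq 0\}$ from above by $1/2-\psi(\|\Sigma^{1/2}u\|/\sigma)$ for some increasing function $\psi$. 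Writing $W=u^TX/\|\Sigma^{1/2}u\|\sim N(0,1)$ and $r=\|\Sigma^{1/2}u\|/\sigma$, this is $\Prob\{W(z-rW)\geq 0\}=\Prob\{zW\geq rW^2\}$; a direct Gaussian computation (conditioning on $W$ and using $\Prob(zW\geq rW^2\mid W)=\Phi(-r|W|)$) shows this equals $\Expect\Phi(-r|W|)$, which is $1/2-\Theta(r)$ for small $r$ and bounded away from $1/2$ for large $r$ — in all cases it is at most $1/2-c\min(r,1)$, hence at most $1/2-c'r$ when $r\lesssim 1$ (the regime forced by the smallness assumption). Subtracting $\epsilon$ for the contamination, $\D_{\Theta_{2s}}(\beta',\mathbb{P})\leq 1/2-c'r-\epsilon+\epsilon$... more carefully, $\D_{\Theta_{2s}}(\beta',\mathbb{P})\leq (1-\epsilon)(1/2-c' r)+\epsilon \le 1/2 - c'r(1-\epsilon)+\epsilon$. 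Combining with the lower bound on $\D_{\Theta_{2s}}(\wh\beta,\mathbb{P})$ and rearranging yields $c'(1-\epsilon)\|\Sigma^{1/2}(\wh\beta-\beta)\|/\sigma \lesssim \epsilon + \gamma_n$, i.e. the prediction-error bound \eqref{eq:sparse-pred} after squaring, with the choice $\delta$ giving the stated exponential probability.

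The remaining two bounds follow by translating the $\Sigma$-norm control. For \eqref{eq:sparse-l2}, since $\wh\beta-\beta$ has support of size at most $2s$, the restricted eigenvalue definition gives $\|\wh\beta-\beta\|\leq \kappa^{-1}\|\Sigma^{1/2}(\wh\beta-\beta)\|$, so \eqref{eq:sparse-l2} is immediate from \eqref{eq:sparse-pred}. For \eqref{eq:sparse-l1}, use Cauchy--Schwarz on the $2s$-sparse vector $\wh\beta-\beta$: $\|\wh\beta-\beta\|_1\leq \sqrt{2s}\,\|\wh\beta-\beta\|$, and square. The probability statement is obtained by choosing $\log(1/\delta)\asymp s\log(ep/s)+n\epsilon^2$ so that $\gamma_n\asymp \sqrt{s\log(ep/s)/n}\vee \epsilon$, which is exactly the order appearing in the bounds; one checks the smallness hypothesis on $\epsilon^2+s\log(ep/s)/n$ is what guarantees $r\lesssim 1$ and that $1-\epsilon$ is bounded below, so all the $O(\cdot)$ constants can be absorbed.

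The step I expect to be the main obstacle is the converse separation inequality, specifically getting a clean linear-in-$r$ lower bound $1/2-\D_{\Theta_{2s}}(\beta',P_\beta)\gtrsim r$ that is uniform over the relevant range of $\|\Sigma^{1/2}(\beta'-\beta)\|$ and does not degrade as the dimension grows. The subtlety is that the witness $u=\beta'-\beta$ must be checked to lie in $\U=\Theta_{2s}$ (it does, by the sparsity bookkeeping), and that the Gaussian quantity $\Expect\Phi(-r|W|)$ must be handled on both the small-$r$ and moderate-$r$ regimes — the smallness assumption on $\epsilon^2+s\log(ep/s)/n$ is precisely what confines us to small $r$, where $1/2-\Expect\Phi(-r|W|)\geq c r$ by a first-order Taylor expansion of $\Phi$ around $0$ together with $\Expect|W|=\sqrt{2/\pi}$. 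Everything else is bookkeeping: two invocations of Proposition \ref{prop:sparse}, the trivial $\D=1/2$ computation at the truth, the contamination perturbation by $\epsilon$, and the norm conversions via $\kappa$ and Cauchy--Schwarz.
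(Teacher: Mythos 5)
Your proof is correct, and its two main ingredients are the same as the paper's: the VC-based uniform convergence of the sparse depth (Proposition \ref{prop:sparse}) and a curvature/separation bound obtained from the witness $u=\hat\beta-\beta\in\Theta_{2s}$, which is exactly the content of the paper's Lemma \ref{lem:curve} (your quantity $\Expect\Phi(-r|W|)$ is the paper's $1-g(t)$, and your ``$\min(r,1)$ then bootstrap $r\lesssim 1$'' step mirrors the paper's $g(4)>11/12$, $\eta<5/12$ device). Where you genuinely diverge is in how the contamination is absorbed. The paper conditions on the clean/contaminated counts $n_1,n_2$, compares empirical depths on the clean subsample versus the full sample through the chain (\ref{eq:dU1})--(\ref{eq:dU2}), and invokes a binomial concentration bound (Lemma \ref{lem:ratio}) to control $n_2/n_1\le \epsilon/(1-\epsilon)+C\sqrt{\log(1/\delta)/n}$; the uniform convergence is then applied to the empirical measure of the clean points against $P_\beta$. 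You instead apply Proposition \ref{prop:sparse} directly to the contaminated mixture $\mathbb{P}_{(\epsilon,\beta,Q)}$ (legitimate, since the proposition is distribution-free) and handle contamination deterministically at the population level via $(1-\epsilon)P_\beta\{A\}\le\mathbb{P}\{A\}\le(1-\epsilon)P_\beta\{A\}+\epsilon$, i.e.\ $\D(\beta,\mathbb{P})\ge(1-\epsilon)/2$ and $\D(\beta',\mathbb{P})\le(1-\epsilon)\D(\beta',P_\beta)+\epsilon$. This buys you a cleaner argument: no sample splitting into clean/contaminated parts, no Lemma \ref{lem:ratio}, and the same rate $\sigma(\epsilon+\sqrt{s\log(ep/s)/n}+\sqrt{\log(1/\delta)/n})$ with the stated probability after choosing $\log(1/\delta)\asymp s\log(ep/s)+n\epsilon^2$; the paper's subsample route is slightly more mechanical but is the one it reuses verbatim for the other theorems. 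Your final conversions, $\|\hat\beta-\beta\|\le\kappa^{-1}\|\Sigma^{1/2}(\hat\beta-\beta)\|$ from $2s$-sparsity of the error and $\|\hat\beta-\beta\|_1\le\sqrt{2s}\,\|\hat\beta-\beta\|$ by Cauchy--Schwarz, are exactly how the paper deduces (\ref{eq:sparse-l2}) and (\ref{eq:sparse-l1}) from (\ref{eq:sparse-pred}). One small polish: justify the linear-in-$r$ lower bound not by a bare Taylor expansion but as in the paper, e.g.\ $\Expect[\Phi(r|W|)]-\tfrac12\ge r\,\phi(4)\,\Expect\min\{1,|W|\}$ once $r\le 4$, which is what makes the constant uniform.
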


The rates are given in prediction loss, squared $\ell_2$ loss and squared $\ell_1$ loss, respectively. The rate for the prediction loss does not depend on the covariance of the covariates $\Sigma$. On the other hand, the rates for the squared $\ell_2$ loss and the squared $\ell_1$ loss depend on $\Sigma$ through a $\kappa^{-2}$ factor.

These rates were also obtained by \cite{chen2015general} under the $\epsilon$-contamination model with a testing-based estimator. However, their results only hold for a subset of $\Theta_s$. In particular, they need to further impose two extra assumptions that $\|\beta\|$ is bounded by the order of $\sigma/\kappa$ and the largest $2s$-sparse eigenvalue of $\Sigma$ is at the order of $\kappa$. In contrast, Theorem \ref{thm:sparse} removes these two assumptions and the convergence rates hold uniformly for all $\beta\in\Theta_s$.

When $\epsilon=0$, the rates obtained in Theorem \ref{thm:sparse} are all minimax optimal by \cite{ye2010rate,raskutti2011minimax,verzelen2012minimax}. Though most lower bound results in the literature for sparse linear regression are for fixed design. They can be easily modified into the random design setting considered here. The details are referred to the related discussion in \cite{raskutti2011minimax,chen2015general}.

For a general $\epsilon>0$, it is direct to check that
\begin{eqnarray*}
\omega\left(\epsilon,\Theta_s,\|\cdot\|^2_{\Sigma}\right) &\asymp& \sigma^2\epsilon^2, \\
\omega\left(\epsilon,\Theta_s,\|\cdot\|^2\right) &\asymp& \frac{\sigma^2\epsilon^2}{\kappa^2}, \\
\omega\left(\epsilon,\Theta_s,\|\cdot\|^2_1\right) &\asymp& \frac{s\sigma^2\epsilon^2}{\kappa^2}.
\end{eqnarray*}
Thus, by Theorem \ref{thm:lower}, the rates are also minimax optimal for $\epsilon>0$.

Theorem \ref{thm:sparse} and the minimax lower bound of the problem shows that the minimax rates are determined by the trade-off between $\frac{s\log\left(\frac{ep}{s}\right)}{n}$ and $\epsilon^2$. When $\epsilon^2\lesssim\frac{s\log\left(\frac{ep}{s}\right)}{n}$, the term $\frac{s\log\left(\frac{ep}{s}\right)}{n}$ dominates, and the minimax rates are the same as those for $\epsilon=0$. In this regime, the contamination has no effect on the minimax rates. Note that $\epsilon\lesssim\frac{s\log\left(\frac{ep}{s}\right)}{n}$ means that a rate-optimal estimator is able to tolerate at most $n\epsilon\lesssim \sqrt{ns\log\left(\frac{ep}{s}\right)}$ contaminated observations before the minimax rate is changed. It is interesting that $\sqrt{ns\log\left(\frac{ep}{s}\right)}$ is an increasing function of the sparsity level $s$. Similar remarks also apply to the other regression problems considered in the paper.

\subsection{Gaussian Graphical Model}\label{sec:GGM}

In this section, we consider the Gaussian graphical model $P_{\Omega}= N(0,\Omega^{-1})$. The precision matrix $\Omega$ belongs to the following sparse class:
$$\mathcal{F}_s(M)=\left\{\Omega=\Omega^T\in\mathbb{R}^{p\times p}:M^{-1}\leq \lambda_{\min}(\Omega)\leq\lambda_{\max}(\Omega)\leq M, \max_{1\leq i\leq p}\sum_{j=1}^p\mathbb{I}\{\Omega_{ij}\neq 0\}\leq s\right\}.$$
The notation $\lambda_{\min}(\cdot)$ and $\lambda_{\max}(\cdot)$ stand for the smallest and the largest eigenvalues.
This class was previously considered in \cite{ren2015asymptotic}. We assume the number $M$ is a constant throughout this section.

For a random vector $X\sim N(0,\Omega^{-1})$, the sparsity pattern of $\Omega$ characterizes the graphical model of conditional dependence. In particular, $\Omega_{ij}=0$ if and only if $X_i$ is independent of $X_j$ given all remaining variables.

Moreover, there is simple linear model that links $X_j$ and $X_{-j}$, where we use $X_{-j}$ to denote the $(p-1)$-dimensional subvector of $X$ excluding the $j$th variable.
Define $\beta_{(j)}=-\Omega_{jj}^{-1}\Omega_{-j,j}$, then
\begin{equation}
X_j=\beta_{(j)}^TX_{-j}+\xi_j,\label{eq:ggm-lm}
\end{equation}
where the noise has distribution $\xi_j\sim N(0,\Omega_{jj}^{-1})$ and is independent of $X_{-j}$. Methods based on (\ref{eq:ggm-lm}) are proposed in the literature to estimator $\Omega$. See \cite{meinshausen2006high,yuan2010high} for some examples.

With i.i.d. observations from $\mathbb{P}_{(\epsilon,\Omega,Q)}$, we discuss how to explore the linear model (\ref{eq:ggm-lm}) to estimate the precision matrix $\Omega$ in a robust way. For each $j\in[n]$, we need to estimate $\beta^{(j)}$ and the variance of $\xi_j$, which is $\Omega_{jj}^{-1}$, respectively. Without loss of generality, assume the sample size $n$ is even. We split the data into two halves. We use the first half to estimate $\beta_{(j)}$ by
$$\hat{\beta}_{(j)}=\argmax_{\beta\in\Theta_s}\D_{\Theta_{2s}}(\beta,\{(X_{-j,i},X_{ji})\}_{i=1}^{n/2}).$$
The set $\Theta_s$ is defined in (\ref{eq:sparse-set}) with the dimension $p$ replaced by $p-1$. The convergence rate of $\hat{\beta}_{(j)}$ is given by Theorem \ref{thm:sparse}.
We then use the second half of the data together with $\hat{\beta}_{(j)}$ to estimate the variance of $\xi_j$. For each $i=n/2+1,...,n$, define the residue
$$w_{ji}=(X_{ji}-\hat{\beta}_{(j)}^TX_{-j,i})^2.$$
Then, we estimate $\Omega_{jj}^{-1}$ by median absolute deviation,
$$\wh{\Omega}_{jj}^{-1}=\frac{\text{Median}(\{w_{ji}\}_{i=n/2+1}^n)}{[\Phi^{-1}(3/4)]^2},$$
where $\Phi(\cdot)$ is the cumulative distribution function of $N(0,1)$.
The convergence rate of $\wh{\Omega}_{jj}^{-1}$ is given by \cite{chen2015robust}.

Now we are ready to define the estimator of the precision matrix $\Omega$ by assembling all pieces. For the $j$th column, its $j$th entry is estimated by $\wh{\Omega}_{jj}$. The remaining entries are estimated by $\wh{\Omega}_{-j,j}=-\wh{\Omega}_{jj}\hat{\beta}_{(j)}$. The convergence rate of the estimator $\wh{\Omega}$ is given by the following theorem.

\begin{thm}\label{thm:GGM}
Consider the estimator $\wh{\Omega}$.
Assume that $\epsilon^2+\frac{s\log\left(\frac{ep}{s}\right)}{n}$ is sufficiently small. Then, we have
$$\|\wh{\Omega}-\Omega\|_{\ell_1}^2\leq C\left(\frac{s^2\log\left(\frac{ep}{s}\right)}{n}\vee s\epsilon^2\right),$$
with $\mathbb{P}_{(\epsilon,\Omega,Q)}$-probability at least $1-\exp\left(-C'\left(s\log\left(\frac{ep}{s}\right)+n\epsilon^2\right)\right)$ uniformly over all $Q$ and $\Omega\in\mathcal{F}_s(M)$, where $C,C'$ are some absolute constants.
\end{thm}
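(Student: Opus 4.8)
The plan is to bound the error of each column of $\wh\Omega$ separately and then take a union bound over $j\in[p]$, exploiting that $\|\wh\Omega-\Omega\|_{\ell_1}=\max_{j\in[p]}\|\wh\Omega_{\cdot,j}-\Omega_{\cdot,j}\|_1$. Fix a column $j$. Since $\beta_{(j)}=-\Omega_{jj}^{-1}\Omega_{-j,j}$ and $\wh\Omega_{-j,j}=-\wh\Omega_{jj}\hat\beta_{(j)}$, I would use the identity
\[
\wh\Omega_{-j,j}-\Omega_{-j,j}=-\wh\Omega_{jj}\big(\hat\beta_{(j)}-\beta_{(j)}\big)-\big(\wh\Omega_{jj}-\Omega_{jj}\big)\beta_{(j)},
\]
so that, splitting the column $\ell_1$ norm into its diagonal and off-diagonal parts,
\[
\|\wh\Omega_{\cdot,j}-\Omega_{\cdot,j}\|_1\le |\wh\Omega_{jj}-\Omega_{jj}|\big(1+\|\beta_{(j)}\|_1\big)+|\wh\Omega_{jj}|\,\|\hat\beta_{(j)}-\beta_{(j)}\|_1.
\]
It then suffices to control $\|\hat\beta_{(j)}-\beta_{(j)}\|_1$, $|\wh\Omega_{jj}-\Omega_{jj}|$, and the deterministic quantities $\|\beta_{(j)}\|_1$ and $|\wh\Omega_{jj}|$. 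For the latter: as $\Omega\in\mathcal{F}_s(M)$ is symmetric with at most $s$ nonzeros per column, $\beta_{(j)}$ is $(s-1)$-sparse and $\|\beta_{(j)}\|\le\Omega_{jj}^{-1}\|\Omega_{-j,j}\|\le M^2$, hence $\|\beta_{(j)}\|_1\le\sqrt{s}\,\|\beta_{(j)}\|\lesssim\sqrt{s}$; and once $|\wh\Omega_{jj}^{-1}-\Omega_{jj}^{-1}|$ is shown to be $o(M^{-1})$ we also get $|\wh\Omega_{jj}|\le 2M$.

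For $\|\hat\beta_{(j)}-\beta_{(j)}\|_1$ I would apply Theorem \ref{thm:sparse} to the linear model (\ref{eq:ggm-lm}): the pair $(X_{-j,i},X_{ji})$ follows $(1-\epsilon)P_{\beta_{(j)}}+\epsilon Q_j$ in exactly the form of Section \ref{sec:sparse}, with noise variance $\sigma^2=\Omega_{jj}^{-1}\le M$ and covariate covariance $\Sigma_{-j,-j}$ a principal submatrix of $\Omega^{-1}$, whose restricted eigenvalue satisfies $\kappa^2\ge\lambda_{\min}(\Omega^{-1})\ge M^{-1}$ by Cauchy interlacing, so $\sigma^2/\kappa^2\le M^2$ is a constant. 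With $p$ replaced by $p-1$ (and $\log(e(p-1)/s)\asymp\log(ep/s)$), Theorem \ref{thm:sparse} gives, with probability at least $1-\exp(-C'(s\log(ep/s)+n\epsilon^2))$, both $\|\hat\beta_{(j)}-\beta_{(j)}\|_1^2\lesssim\frac{s^2\log(ep/s)}{n}\vee s\epsilon^2$ and $\tau_j^2:=\|\hat\beta_{(j)}-\beta_{(j)}\|_\Sigma^2\lesssim\frac{s\log(ep/s)}{n}\vee\epsilon^2$.

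For $|\wh\Omega_{jj}-\Omega_{jj}|$ I would use the data splitting. Conditionally on the first half, $\hat\beta_{(j)}$ is fixed and independent of the second half; on its clean part, $X_{ji}-\hat\beta_{(j)}^TX_{-j,i}=\xi_{ji}+(\beta_{(j)}-\hat\beta_{(j)})^TX_{-j,i}$ is a sum of two independent Gaussians (as $\xi_{ji}\perp X_{-j,i}$), hence $N(0,\Omega_{jj}^{-1}+\tau_j^2)$, so $w_{ji}$ is, up to the $\epsilon$-fraction of contamination, an i.i.d.\ sample of $(\Omega_{jj}^{-1}+\tau_j^2)$ times a $\chi^2_1$ variable, with median $(\Omega_{jj}^{-1}+\tau_j^2)[\Phi^{-1}(3/4)]^2$. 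The median-absolute-deviation analysis of \cite{chen2015robust} then yields $|\wh\Omega_{jj}^{-1}-(\Omega_{jj}^{-1}+\tau_j^2)|\lesssim\sqrt{\log(1/\delta)/n}\vee\epsilon$, and choosing $\delta$ to accommodate the union bound over $j$ (which costs only an extra $\log p\lesssim s\log(ep/s)$ in the exponent) gives $|\wh\Omega_{jj}^{-1}-\Omega_{jj}^{-1}|\lesssim\tau_j^2+\sqrt{\log p/n}\vee\epsilon$; since $\Omega_{jj}^{-1}\ge M^{-1}$ and this is $o(1)$ under the smallness hypothesis, inverting gives $|\wh\Omega_{jj}-\Omega_{jj}|\lesssim\tau_j^2+\sqrt{\log p/n}\vee\epsilon$. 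Combining, the dominant term $|\wh\Omega_{jj}|\,\|\hat\beta_{(j)}-\beta_{(j)}\|_1\lesssim\sqrt{\frac{s^2\log(ep/s)}{n}\vee s\epsilon^2}$, while $|\wh\Omega_{jj}-\Omega_{jj}|(1+\|\beta_{(j)}\|_1)\lesssim\sqrt{s}\big(\tau_j^2+\sqrt{\log p/n}\vee\epsilon\big)$, and a short computation using $\log p\le s\log(ep/s)$, the higher-order nature of $\tau_j^2$, and the smallness of $\frac{s\log(ep/s)}{n}$ and $\epsilon$ shows this is also $\lesssim\sqrt{\frac{s^2\log(ep/s)}{n}\vee s\epsilon^2}$. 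Squaring and maximizing over the $p$ columns, with the union-bounded probability, yields the theorem.

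\textbf{Main obstacle.} The crux is the $|\wh\Omega_{jj}-\Omega_{jj}|$ step: one must propagate the plug-in estimation error of $\hat\beta_{(j)}$ through the squared residuals $w_{ji}$ and show it perturbs the sample median by only the higher-order amount $\tau_j^2$. This relies on the independence from data splitting, the exact Gaussian conditional structure of (\ref{eq:ggm-lm}), and a Lipschitz bound for how the quantile of a centered Gaussian moves under a variance perturbation; one also has to keep $\wh\Omega_{jj}^{-1}$ bounded away from $0$ to legitimize the inversion $\wh\Omega_{jj}=(\wh\Omega_{jj}^{-1})^{-1}$, and verify that the union bound over all $p$ columns preserves the claimed probability $1-\exp(-C'(s\log(ep/s)+n\epsilon^2))$.
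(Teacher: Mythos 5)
Your proposal is correct and follows essentially the same route as the paper: apply Theorem \ref{thm:sparse} to the regressions (\ref{eq:ggm-lm}) on the first half of the data, use the data-splitting argument so that the clean residuals on the second half are $N(0,\Omega_{jj}^{-1}+\|\Sigma_{-j,-j}^{1/2}(\hat\beta_{(j)}-\beta_{(j)})\|^2)$ and invoke the median-absolute-deviation result of \cite{chen2015robust} for $\wh\Omega_{jj}^{-1}$, then combine via the same decomposition $\wh\Omega_{-j,j}-\Omega_{-j,j}=-\wh\Omega_{jj}(\hat\beta_{(j)}-\beta_{(j)})-(\wh\Omega_{jj}-\Omega_{jj})\beta_{(j)}$ and a union bound over the $p$ columns with $\delta=\exp(-C(s\log(ep/s)+n\epsilon^2))$. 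The only difference is bookkeeping: you make explicit the bounds $\|\beta_{(j)}\|_1\lesssim\sqrt{s}$, $|\wh\Omega_{jj}|\lesssim 1$, and the inversion step, which the paper leaves implicit.
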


Theorem \ref{thm:GGM} gives the error rate of $\wh{\Omega}$ in terms of squared matrix $\ell_1$ norm. Note that the estimator $\wh{\Omega}$ may not be positive semidefinite. A simple projection step discussed in \cite{yuan2010high} leads to a positive semidefinite estimator with the same error rate.

The minimax lower bound of the problem is given by the following theorem.
\begin{thm}\label{thm:lowerGMM}
Assume $p>c_1n^{\beta}$ for some constants $\beta>1$ and $c_1>0$, and $\frac{s^2(\log p)^3}{n}$ is sufficiently small. Then,
$$\inf_{\wh{\Omega}}\sup_{\Omega\in\mathcal{F}_s(M),Q}\mathbb{P}_{(\epsilon,\Omega,Q)}\left\{\|\wh{\Omega}-\Omega\|_{\ell_1}^2> C\left(\frac{s^2\log p}{n}\vee s\epsilon^2\right)\right\}\geq c,$$
for some constants $C,c>0$.
\end{thm}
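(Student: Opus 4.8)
The plan is to read off both terms of the rate from Theorem~\ref{thm:lower}: the term $s^2\log p/n$ is the uncontaminated ($\epsilon=0$) minimax rate for estimating a sparse precision matrix in the matrix $\ell_1$ loss, and the term $s\epsilon^2$ comes from a direct lower bound on the modulus of continuity $\omega(\epsilon,\mathcal{F}_s(M),\|\cdot\|_{\ell_1}^2)$. Concretely, I apply Theorem~\ref{thm:lower} to the experiment $\{N(0,\Omega^{-1}):\Omega\in\mathcal{F}_s(M)\}$ with loss $L(\Omega_1,\Omega_2)=\|\Omega_1-\Omega_2\|_{\ell_1}^2$. This reduces the statement to two claims: \emph{(a)} the bound (\ref{eq:lower}) holds at $\epsilon=0$ with $\mathcal{R}(0)\asymp s^2\log p/n$; and \emph{(b)} $\omega(\epsilon,\mathcal{F}_s(M),\|\cdot\|_{\ell_1}^2)\gtrsim s\epsilon^2$. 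Granting both, Theorem~\ref{thm:lower} gives (\ref{eq:lower}) for every $\epsilon\in(0,1)$ with $\mathcal{R}(\epsilon)\asymp\mathcal{R}(0)\vee\omega\gtrsim (s^2\log p/n)\vee(s\epsilon^2)$, which is exactly the assertion.

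Claim \emph{(a)} is the matrix-$\ell_1$ minimax lower bound of \cite{ren2015asymptotic}, established under precisely the stated hypotheses; I would cite it and recall the construction. Fix a small constant $c>0$, set $\delta^2=c\log p/n$, and to each $w\in\{0,1\}^{p-1}$ with $\|w\|_0=s-1$ associate $\Omega_w=I_p+\delta(e_1\tilde w^{T}+\tilde w e_1^{T})$, where $\tilde w\in\mathbb{R}^p$ embeds $w$ into coordinates $2,\dots,p$. The nonzero eigenvalues of the rank-two perturbation are $\pm\delta\sqrt{s-1}$, so $\Omega_w\in\mathcal{F}_s(M)$ once $s\log p/n$ is small (implied by $s^2(\log p)^3/n$ small), and $\Omega_w$ has maximal row sparsity $s$. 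A Varshamov--Gilbert packing yields a set $W$ of such vectors with $\log|W|\gtrsim s\log(ep/s)\asymp s\log p$ (the hypothesis $p>c_1n^{\beta}$ ensures $p\gg s$) and pairwise Hamming distance $\gtrsim s$, hence $\|\Omega_w-\Omega_{w'}\|_{\ell_1}=\delta\, d_H(w,w')\gtrsim s\delta\asymp s\sqrt{\log p/n}$ for distinct $w,w'\in W$ (the separation is concentrated in the first column). Since the $n$-fold KL divergence obeys $\mathrm{KL}(N(0,\Omega_w^{-1})^{\otimes n}\,\|\,N(0,\Omega_{w'}^{-1})^{\otimes n})\lesssim n\fnorm{\Omega_w-\Omega_{w'}}^2\lesssim ns\delta^2=cs\log p$, choosing $c$ small keeps this below a fixed fraction of $\log|W|$, and Fano's inequality gives $\mathcal{R}(0)\gtrsim (s\sqrt{\log p/n})^2=s^2\log p/n$. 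The stronger condition $s^2(\log p)^3/n$ small is what \cite{ren2015asymptotic} uses to keep the Gaussian log-likelihood ratios and the eigenvalue constraints under control along this construction.

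Claim \emph{(b)} is a two-point argument. Assume first $\epsilon<1/2$. Take $\Omega_1=I_p$ and $\Omega_2=I_p+\delta(e_1\tilde w^{T}+\tilde w e_1^{T})$ with $\|w\|_0=s-1$ and $\delta=c_0\epsilon/\sqrt{s-1}$ for a small constant $c_0$; then $\Omega_1,\Omega_2\in\mathcal{F}_s(M)$. The Gaussian KL formula gives $\mathrm{KL}(N(0,I_p)\,\|\,N(0,\Omega_2^{-1}))\asymp\fnorm{\Omega_2^{-1}-I_p}^2\lesssim\delta^2(s-1)=c_0^2\epsilon^2$, so Pinsker's inequality yields $\TV(N(0,\Omega_1^{-1}),N(0,\Omega_2^{-1}))\le c_0\epsilon\le\epsilon/(1-\epsilon)$ for $c_0$ small. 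On the other hand $\|\Omega_1-\Omega_2\|_{\ell_1}=\delta\,\|e_1\tilde w^{T}+\tilde w e_1^{T}\|_{\ell_1}=\delta(s-1)\asymp c_0\sqrt{s-1}\,\epsilon$, hence $\|\Omega_1-\Omega_2\|_{\ell_1}^2\gtrsim s\epsilon^2$. When $\epsilon\ge1/2$ the constraint $\TV\le\epsilon/(1-\epsilon)$ is vacuous and the same pair with $\delta$ taken at the largest scale allowed by the eigenvalue bound (so that $\delta\sqrt{s}\asymp1$) already gives $\|\Omega_1-\Omega_2\|_{\ell_1}^2\asymp s\gtrsim s\epsilon^2$. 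Either way $\omega(\epsilon,\mathcal{F}_s(M),\|\cdot\|_{\ell_1}^2)\gtrsim s\epsilon^2$.

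I expect the main obstacle to be the factor $s$ in claim \emph{(a)}: a naive Fano argument over sparsity patterns only certifies $\ell_1$-separation of a single entry, of order $\delta$, and recovers merely $\log p/n$. One instead needs a packing that is simultaneously exponentially large and pairwise well separated in Hamming distance, while still keeping every pairwise Gaussian KL divergence below $\log|W|$ and every $\Omega_w$ inside $\mathcal{F}_s(M)$; this balancing act — and the resulting restrictive conditions $p>c_1n^{\beta}$ and $s^2(\log p)^3/n$ small — is the delicate part, and it is exactly what is carried out in \cite{ren2015asymptotic}. Claim \emph{(b)} is routine once the Gaussian KL formula is in hand.
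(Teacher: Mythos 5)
Your proposal is correct and follows essentially the same route as the paper: invoke Theorem~\ref{thm:lower}, take $\mathcal{R}(0)\asymp s^2\log p/n$ from the known uncontaminated lower bound, and lower-bound the modulus $\omega\bigl(\epsilon,\mathcal{F}_s(M),\|\cdot\|_{\ell_1}^2\bigr)$ by a direct two-point calculation giving $s\epsilon^2$. The only differences are cosmetic: the paper cites \cite{cai2012estimating} (rather than \cite{ren2015asymptotic}) for the $\epsilon=0$ rate and leaves both the Fano construction and the modulus computation to the reader, whereas you spell them out.
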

\begin{proof}
By Theorem \ref{thm:lower}, the minimax lower is in the form of $\mathcal{R}(0)\vee\omega\left(\epsilon,\mathcal{F}_s(M),\|\cdot\|^2_{\ell_1}\right)$. The first term $\mathcal{R}(0)$ has order $\frac{s^2\log p}{n}$, which was proved in \cite{cai2012estimating}. Direct calculation gives the order of the second term $\omega\left(\epsilon,\mathcal{F}_s(M),\|\cdot\|^2_{\ell_1}\right)\asymp s\epsilon^2$.
\end{proof}
Combining the conclusions of Theorem \ref{thm:GGM} and Theorem \ref{thm:lowerGMM}, we conclude that the minimax rate for estimating $\Omega$ under the squared matrix $\ell_1$ norm in the setting of $\epsilon$-contamination model is $\frac{s^2\log p}{n}\vee s\epsilon^2$. Moreover, the estimator $\wh{\Omega}$ based on regression depth is able to achieve the minimax rate.

\subsection{Low-Rank Trace Regression}\label{sec:rank}

The goal of trace regression is to recover a low-rank matrix $B\in\mathbb{R}^{p_1\times p_2}$ from noisy linear observations specified by the model $y=\Tr(X^TB)+\sigma z$. We denote by $P_B$ the joint distribution of $X\in\mathbb{R}^{p_1\times p_2}$ and $y\in\mathbb{R}$ that follows $\text{vec}(X)\sim N(0,\Sigma)$ and $y|X\sim N(\Tr(X^TB),\sigma^2)$. Again, the dependence on $\Sigma$ and $\sigma^2$ is suppressed for the notation $P_B$.
The matrix $B$ is assumed to belong to the following set:
\begin{equation}
\mathcal{A}_r=\{B\in\mathbb{R}^{p_1\times p_2}\backslash\{0\}:\text{rank}(B)\leq r\}.\label{eq:lrset}
\end{equation}
The univariate regression depth (\ref{eq:uni-depth}) can be easily adapted to the trace regression problem. That is,
$$\D_{\U}(B,\mathbb{P})=\inf_{U\in\U}\mathbb{P}\left\{\iprod{U}{X}(y-\iprod{B}{X})\geq 0\right\},$$
where $\U\subset\mathbb{R}^{p_1\times p_2}$. We take advantage of the low-rank assumption, and define the estimator by
\begin{equation}
\wh{B}=\argmax_{B\in\mathcal{A}_r}\D_{\mathcal{A}_{2r}}(B,\{X_i,y_i\}_{i=1}^n).\label{eq:lr-est}
\end{equation}
We first present a uniform convergence result for regression depth with a low-rank structure.

\begin{proposition}\label{prop:rank}
For any probability measure $\mathbb{P}$ and its associated empirical measure $\mathbb{P}_n$, we have for any $\delta>0$,
$$\sup_{B\in\mathcal{A}_r}\left|\D_{\mathcal{A}_{2r}}(B,\mathbb{P}_n)-\D_{\mathcal{A}_{2r}}(B,\mathbb{P})\right|\leq C\sqrt{\frac{r(p_1+p_2)}{n}}+\sqrt{\frac{\log(1/\delta)}{2n}},$$
with probability at least $1-2\delta$, where $C>0$ is some absolute constant.
\end{proposition}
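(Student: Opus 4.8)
The plan is to reduce the left-hand side to a single uniform empirical-process deviation over a class of sets, bound the Vapnik--Chervonenkis dimension of that class by exploiting the low-rank parametrization, and then combine a VC entropy bound for the expected deviation with a bounded-differences argument for the fluctuation term; this follows the same two-part template as Propositions \ref{prop:nonpar} and \ref{prop:sparse}, the only genuinely new ingredient being the VC-dimension estimate. For $B\in\mathcal{A}_r$ and $U\in\mathcal{A}_{2r}$ write $A_{B,U}=\sth{(X,y):\iprod{U}{X}\pth{y-\iprod{B}{X}}\geq 0}$. Applying the elementary bound $\left|\inf_U f(U)-\inf_U g(U)\right|\leq\sup_U\left|f(U)-g(U)\right|$ with $f(U)=\mathbb{P}_n(A_{B,U})$ and $g(U)=\mathbb{P}(A_{B,U})$ gives
$$\sup_{B\in\mathcal{A}_r}\left|\D_{\mathcal{A}_{2r}}(B,\mathbb{P}_n)-\D_{\mathcal{A}_{2r}}(B,\mathbb{P})\right|\leq\sup_{A\in\mathcal{C}}\left|\mathbb{P}_n(A)-\mathbb{P}(A)\right|,\qquad \mathcal{C}=\sth{A_{B,U}:B\in\mathcal{A}_r,\ U\in\mathcal{A}_{2r}},$$
so it suffices to control the supremum of the empirical process indexed by the set class $\mathcal{C}$.

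The key step is to show $\mathrm{VC}(\mathcal{C})\lesssim r(p_1+p_2)$; this is where the rank constraint must be used, since the naive bound treating $B,U$ as arbitrary elements of $\mathbb{R}^{p_1\times p_2}$ would only give $p_1p_2$. Every matrix of rank at most $2r$ (respectively $r$) factors as $U_1U_2^T$ with $U_1\in\mathbb{R}^{p_1\times 2r}$, $U_2\in\mathbb{R}^{p_2\times 2r}$ (respectively $B_1B_2^T$ with $B_1\in\mathbb{R}^{p_1\times r}$, $B_2\in\mathbb{R}^{p_2\times r}$), so $\mathcal{C}$ is contained in the class of sets $\sth{(X,y):g_\theta(X,y)\geq 0}$ where $g_\theta(X,y)=\iprod{U_1U_2^T}{X}\pth{y-\iprod{B_1B_2^T}{X}}$ and $\theta=(B_1,B_2,U_1,U_2)$ ranges over $\mathbb{R}^D$ with $D=3r(p_1+p_2)$. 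For each fixed data point $(X,y)$, the entries of $U_1U_2^T$ and of $B_1B_2^T$ are quadratic in $\theta$, so $g_\theta(X,y)$ — a sum of such a quadratic times $y$ and a product of two such quadratics — is a polynomial in $\theta$ of degree at most $4$. By the classical bound on the number of sign patterns of a finite family of real polynomials (Warren's theorem / Milnor--Thom), any $m\geq D$ of the functions $\theta\mapsto g_\theta(X_i,y_i)$, $i=1,\dots,m$, realize at most $(Cm/D)^{D}$ distinct sign vectors; since $4$ is a constant this forces $\mathrm{VC}(\mathcal{C})\leq C'D\asymp r(p_1+p_2)$.

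Given the VC bound, the rest is standard. Symmetrization, Haussler's bound on the $L^2(\mathbb{P}_n)$ covering numbers of a VC class, and Dudley's entropy integral give $\mathbb{E}\sup_{A\in\mathcal{C}}\left|\mathbb{P}_n(A)-\mathbb{P}(A)\right|\leq C\sqrt{r(p_1+p_2)/n}$ with no logarithmic factor. Moving a single observation changes $\sup_{A\in\mathcal{C}}\left|\mathbb{P}_n(A)-\mathbb{P}(A)\right|$ by at most $1/n$, so McDiarmid's bounded-differences inequality shows this supremum exceeds its mean by at most $\sqrt{\log(1/\delta)/(2n)}$ with probability at least $1-2\delta$. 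Adding the two bounds yields the assertion.

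The one real obstacle is the VC-dimension estimate of the second paragraph — getting the effective dimension $r(p_1+p_2)$ rather than the ambient $p_1p_2$. The polynomial-parametrization route above is the cleanest; a more hands-on alternative, parallel to the support-enumeration step behind Proposition \ref{prop:sparse}, would be to put $\epsilon$-nets on the Grassmannians of $\leq r$- and $\leq 2r$-dimensional row and column spaces and union-bound over the resulting finitely many fixed-subspace subclasses (each a class of products of two linear thresholds in $O(r^2)$ coordinates), but since the indicator $A_{B,U}$ is discontinuous in those subspaces this approach requires extra care and I would not pursue it.
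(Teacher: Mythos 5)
Your proof is correct and follows essentially the same route as the paper: reduce the depth deviation to a uniform empirical-process bound over a class of sets, bound that class's VC dimension by $O(r(p_1+p_2))$ via a polynomial parametrization of the low-rank matrices together with Warren's sign-pattern theorem, and finish with the VC expectation bound plus McDiarmid's inequality. The only cosmetic difference is that the paper first rewrites $\iprod{U}{X}(y-\iprod{B}{X})=\Tr(WZ^T)$ with a block matrix $W$ of rank $O(r)$ so that its reusable degree-two VC lemma (Lemma~\ref{lem:VCrank}) applies, whereas you apply Warren's bound directly to the degree-four polynomial in the factors $(B_1,B_2,U_1,U_2)$ --- the same counting argument either way.
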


With the uniform convergence of empirical depth, we can determine the convergence rate of the estimator (\ref{eq:lr-est}). To facilitate the presentation, we define the following quantity:
$$\kappa=\inf_{\text{rank}(A)=2r}\frac{\|\Sigma^{1/2}\text{vec}(A)\|}{\fnorm{A}}.$$
Now, consider i.i.d. observations from $\mathbb{P}_{(\epsilon,B,Q)}=(1-\epsilon)P_B+\epsilon Q$.

\begin{thm}\label{thm:trace}
Consider the estimator $\wh{B}$.
Assume that $\epsilon^2+\frac{r(p_1+p_2)}{n}$ is sufficiently small. Then, we have
\begin{eqnarray*}
\|\Sigma^{1/2}(\text{vec}(\wh{B}-B))\|^2 &\leq& C\sigma^2\left(\frac{r(p_1+p_2)}{n}\vee\epsilon^2\right),\\
\fnorm{\wh{B}-B}^2 &\leq& C\frac{\sigma^2}{\kappa^2}\left(\frac{r(p_1+p_2)}{n}\vee\epsilon^2\right),\\
\nunorm{\wh{B}-B}^2 &\leq& C\frac{\sigma^2}{\kappa^2}\left(\frac{r^2(p_1+p_2)}{n}\vee r\epsilon^2\right),
\end{eqnarray*}
with $\mathbb{P}_{(\epsilon,B,Q)}$-probability at least $1-\exp\left(-C'\left(r(p_1+p_2)+n\epsilon^2\right)\right)$ uniformly over all $Q$ and $B\in\mathcal{A}_r$, where $C,C'$ are some absolute constants.
\end{thm}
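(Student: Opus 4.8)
The plan is to control the population depth $\D_{\mathcal{A}_{2r}}(\wh B,P_B)$ from both sides, where $B$ denotes the true parameter and $\wh B$ the maximizer. First I would show that near-maximality of the empirical depth, together with the uniform concentration of Proposition~\ref{prop:rank}, forces $\D_{\mathcal{A}_{2r}}(\wh B,P_B)$ to be close to $1/2$; then I would prove a purely deterministic inequality stating that $\D_{\mathcal{A}_{2r}}(B',P_B)$ decreases at least linearly in $\|\Sigma^{1/2}\text{vec}(B'-B)\|/\sigma$ as $B'$ leaves $B$. Combining the two pins down $\tau:=\|\Sigma^{1/2}\text{vec}(\wh B-B)\|$, after which the three stated rates follow by norm comparisons. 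One may assume $\wh B\neq B$ (otherwise all bounds are trivial); then $\Delta:=\wh B-B$ has $\text{rank}(\Delta)\le 2r$, so $\Delta,-\Delta\in\mathcal{A}_{2r}$.

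For the stochastic side, note that under $P_B$ the residual $y-\iprod{B}{X}=\sigma z$ is a symmetric noise independent of $X$, so conditioning on $X$ gives $P_B\{\iprod{U}{X}(y-\iprod{B}{X})\ge 0\}\ge 1/2$ for every $U$, hence $\D_{\mathcal{A}_{2r}}(B,P_B)\ge 1/2$; passing to the mixture costs at most a factor $1-\epsilon$, so $\D_{\mathcal{A}_{2r}}(B,\mathbb{P}_{(\epsilon,B,Q)})\ge 1/2-\epsilon$. Apply Proposition~\ref{prop:rank} to $\mathbb{P}=\mathbb{P}_{(\epsilon,B,Q)}$ with $\delta=\exp(-(r(p_1+p_2)+n\epsilon^2))$; on the resulting event of probability $1-2\delta$ the empirical and population depths differ by at most some $\gamma\lesssim\sqrt{r(p_1+p_2)/n}\vee\epsilon$, uniformly over $\mathcal{A}_r$. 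Since $\wh B$ maximizes the empirical depth over $\mathcal{A}_r\ni B$,
\[
\D_{\mathcal{A}_{2r}}(\wh B,\mathbb{P}_{(\epsilon,B,Q)})\ge\D_{\mathcal{A}_{2r}}(\wh B,\mathbb{P}_n)-\gamma\ge\D_{\mathcal{A}_{2r}}(B,\mathbb{P}_n)-\gamma\ge\D_{\mathcal{A}_{2r}}(B,\mathbb{P}_{(\epsilon,B,Q)})-2\gamma\ge\tfrac12-\epsilon-2\gamma,
\]
and peeling the contamination once more via $\mathbb{P}_{(\epsilon,B,Q)}(E)\le P_B(E)+\epsilon$ gives $\D_{\mathcal{A}_{2r}}(\wh B,P_B)\ge\tfrac12-2\epsilon-2\gamma$. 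All constants here are absolute, so the conclusion will be uniform in $B$ and $Q$.

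The crux is the deterministic upper bound, and I would obtain it from the single feasible test direction $U=-\Delta$: under $P_B$, $\iprod{-\Delta}{X}(y-\iprod{\wh B}{X})=-a(a+\sigma z)$ with $a:=\iprod{\Delta}{X}$, and since $\text{vec}(X)\sim N(0,\Sigma)$ we have $a\sim N(0,\tau^2)$ independent of $z\sim N(0,1)$. Hence
\[
\D_{\mathcal{A}_{2r}}(\wh B,P_B)\le\mathbb{P}\{a(a+\sigma z)\le 0\}=2\int_0^\infty\phi(u)\,\Phi\!\left(-\tfrac{\tau}{\sigma}u\right)du=:\psi(\tau/\sigma),
\]
and differentiating under the integral gives $\psi'(t)=-\tfrac{1}{\pi(1+t^2)}$, so $\psi(t)=\tfrac12-\tfrac1\pi\arctan t$. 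Using $\arctan t\ge\tfrac\pi4\min(t,1)$ on $[0,\infty)$ yields $\psi(t)\le\tfrac12-\tfrac14\min(t,1)$. Combining with the lower bound above forces $\min(\tau/\sigma,1)\le 8(\epsilon+\gamma)$; since $\epsilon^2+r(p_1+p_2)/n$ is assumed small, $\epsilon+\gamma$ is small, the minimum is attained by $\tau/\sigma$, and $\tau\le 8\sigma(\epsilon+\gamma)\lesssim\sigma\bigl(\sqrt{r(p_1+p_2)/n}\vee\epsilon\bigr)$.

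Finally I would translate $\tau$ into the three losses. The first bound is exactly $\tau^2\lesssim\sigma^2\bigl(r(p_1+p_2)/n\vee\epsilon^2\bigr)$. Because $\text{rank}(\wh B-B)\le 2r$, the definition of $\kappa$ gives $\fnorm{\wh B-B}\le\tau/\kappa$, hence the Frobenius bound; and $\nunorm{\wh B-B}\le\sqrt{2r}\,\fnorm{\wh B-B}\le\sqrt{2r}\,\tau/\kappa$ (Cauchy--Schwarz applied to the at most $2r$ nonzero singular values) gives the last bound, $\nunorm{\wh B-B}^2\lesssim\tfrac{\sigma^2}{\kappa^2}\bigl(r^2(p_1+p_2)/n\vee r\epsilon^2\bigr)$. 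The success probability is $1-2\delta\ge 1-\exp(-C'(r(p_1+p_2)+n\epsilon^2))$ after adjusting $C'$. I expect the only genuinely delicate point to be the deterministic step: recognizing that the direction $-(\wh B-B)$ is an effective choice for the infimum defining the depth, and that the induced Gaussian orthant probability has the closed form $\tfrac12-\tfrac1\pi\arctan(\tau/\sigma)$, which is precisely what converts near-maximality of the depth into a quantitative rate; the remainder is the chaining above plus the already-granted propositions.
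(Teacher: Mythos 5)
Your proposal is correct, and it reaches the stated rates by a route that differs from the paper's in two places. First, you handle the contamination entirely at the population level: you apply Proposition~\ref{prop:rank} directly to the mixture $\mathbb{P}_{(\epsilon,B,Q)}$ and then peel off the contamination twice through $(1-\epsilon)P_B+\epsilon Q\leq P_B+\epsilon$ (once to get $\D_{\mathcal{A}_{2r}}(B,\mathbb{P}_{(\epsilon,B,Q)})\geq\tfrac12-\epsilon$, once to pass from the mixture back to $P_B$ for $\wh B$). The paper instead splits the sample into the $n_1$ clean and $n_2$ contaminated points, applies the uniform convergence to the clean empirical measure, uses the add/remove-points sandwich for depths, and controls $n_2/n_1$ with Lemma~\ref{lem:ratio}; both give $\D_{\mathcal{A}_{2r}}(\wh B,P_B)\geq\tfrac12-O(\epsilon+\gamma)$ with the same probability, and your version is slightly leaner since it needs no binomial concentration. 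Second, where the paper invokes the curvature Lemma~\ref{lem:curve} (adapted to vectorized trace regression), whose proof lower-bounds $g(t)-\tfrac12$ with $g(t)=\mathbb{E}\Phi(t|Z|)$ by a linear function through the ad hoc bounds $g(4)>11/12$ and $g(t)-\tfrac12\geq t\phi(4)\mathbb{E}\min\{1,|Z|\}$, you compute the very same quantity in closed form, $\psi(t)=1-g(t)=\tfrac12-\tfrac1\pi\arctan t$, and use $\arctan t\geq\tfrac\pi4\min(t,1)$; this gives the identical linear-decay conclusion with explicit constants. The trade-off is that the paper's softer curvature argument is the one that extends to the multivariate Gaussian case (Lemma~\ref{lem:curve-m}) and to elliptical errors (Theorem~\ref{thm:EC}), where no arctan formula is available, whereas your computation is specific to univariate Gaussian noise — which is all that Theorem~\ref{thm:trace} needs. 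The final norm conversions (rank of $\wh B-B$ at most $2r$, the restricted eigenvalue $\kappa$, and the $\sqrt{2r}$ Cauchy--Schwarz step for the nuclear norm) coincide with the paper's.

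One small correction: with $a=\iprod{\Delta}{X}$ and $\Delta=\wh B-B$, the residual under $P_B$ is $y-\iprod{\wh B}{X}=\sigma z-a$, so the informative test direction is $U=+\Delta$, for which the event probability is $\mathbb{P}\{a(\sigma z-a)\geq0\}=\mathbb{E}\Phi(-|a|/\sigma)$; your choice $U=-\Delta$ combined with the written residual $a+\sigma z$ contains two sign slips that cancel, and your final expression $\mathbb{P}\{a(a+\sigma z)\leq0\}$ equals the correct one by symmetry of $z$. Since both $\pm\Delta$ lie in $\mathcal{A}_{2r}$ (after ruling out $\Delta=0$, as you do), this is cosmetic and does not affect the argument.
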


Similar to Theorem \ref{thm:sparse}, Theorem \ref{thm:trace} gives rates for prediction loss, squared Frobenius loss and squared nuclear loss, respectively. When $\epsilon=0$, the three rates are all minimax optimal by \cite{koltchinskii2011nuclear}. To see the optimality for $\epsilon>0$, note that
\begin{eqnarray*}
\omega\left(\epsilon,\mathcal{A}_r,\|\cdot\|^2_{\Sigma}\right) &\asymp& \sigma^2\epsilon^2, \\
\omega\left(\epsilon,\mathcal{A}_r,\|\cdot\|^2_{\rm F}\right) &\asymp& \frac{\sigma^2\epsilon^2}{\kappa^2}, \\
\omega\left(\epsilon,\mathcal{A}_r,\|\cdot\|^2_{\rm N}\right) &\asymp& \frac{r\sigma^2\epsilon^2}{\kappa^2}.
\end{eqnarray*}
Thus, by Theorem \ref{thm:lower}, the rates are all minimax optimal.

Results in \cite{chen2015general} gave the same rate for trace regression in the setting of $\epsilon$-contamination model. However, they required extra assumptions such as the boundedness of $\fnorm{B}$ and of $\opnorm{\Sigma}$. In contrast, Theorem \ref{thm:trace} achieves the minimax rate of the problem without these extra assumptions.

\section{Applications of Multivariate Regression Depth}\label{sec:multi}

\subsection{Multivariate Linear Regression}\label{sec:multiple}

Starting from this section, we consider regression problems with multiple responses in the setting of $\epsilon$-contamination model.
Consider the model $Y=B^TX+\sigma Z$, where $B\in\mathbb{R}^{p\times m}$. We use $P_B$ to denote the joint distribution of $X\in\mathbb{R}^p$ and $Y\in\mathbb{R}^m$ specified by $X\sim N(0,\Sigma)$ and $Y|X\sim N(B^TX,\sigma^2 I_m)$. Again, the dependence on $\Sigma$ and $\sigma^2$ is suppressed for the notation $P_B$. We use the multivariate regression depth (\ref{eq:depth}) for estimating $B$.
The estimator is defined as
\begin{equation}
\wh{B}=\argmax_{B\in\mathbb{R}^{p\times m}}\D_{\mathbb{R}^{p\times m}\backslash\{0\}}(B,\{X_i,Y_i\}_{i=1}^n).\label{eq:multiple-est}
\end{equation}
Intuitively, the $m$ univariate regression problems are independent, and one can estimate every column of $B$ separately. The rates are optimal when there is no contamination. However, we will show that this strategy does not lead to rate optimality in the setting of $\epsilon$-contamination model. 

The uniform convergence of the multivariate regression depth is given by the following proposition.

\begin{proposition}\label{prop:multiple}
For any probability measure $\mathbb{P}$ and its associated empirical measure $\mathbb{P}_n$, we have for any $\delta>0$,
$$\sup_{B\in\mathbb{R}^{p\times m}}\left|\D_{\mathbb{R}^{p\times m}\backslash\{0\}}(B,\mathbb{P}_n)-\D_{\mathbb{R}^{p\times m}\backslash\{0\}}(B,\mathbb{P})\right|\leq C\sqrt{\frac{pm}{n}}+\sqrt{\frac{\log(1/\delta)}{2n}},$$
with probability at least $1-2\delta$, where $C>0$ is some absolute constant.
\end{proposition}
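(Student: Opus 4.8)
The plan is to reduce the statement to a single Vapnik--Chervonenkis deviation bound. First, write $f_n(U;B)=\mathbb{P}_n\{\iprod{U^TX}{Y-B^TX}\geq 0\}$ and $f(U;B)=\mathbb{P}\{\iprod{U^TX}{Y-B^TX}\geq 0\}$, so that $\D_{\mathbb{R}^{p\times m}\backslash\{0\}}(B,\mathbb{P}_n)=\inf_{U\neq 0}f_n(U;B)$ and similarly for $\mathbb{P}$. Since $|\inf_\alpha a_\alpha-\inf_\alpha b_\alpha|\le\sup_\alpha|a_\alpha-b_\alpha|$, taking a supremum over $B$ gives
$$\sup_{B\in\mathbb{R}^{p\times m}}\left|\D_{\mathbb{R}^{p\times m}\backslash\{0\}}(B,\mathbb{P}_n)-\D_{\mathbb{R}^{p\times m}\backslash\{0\}}(B,\mathbb{P})\right|\ \le\ \sup_{U\neq 0,\,B}\left|(\mathbb{P}_n-\mathbb{P})(E_{U,B})\right|,$$
where $E_{U,B}=\{(x,y):\iprod{U^Tx}{y-B^Tx}\geq 0\}$. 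Thus it suffices to prove a uniform deviation bound of order $\sqrt{pm/n}+\sqrt{\log(1/\delta)/(2n)}$ over the set class $\mathcal{E}=\{E_{U,B}:U\in\mathbb{R}^{p\times m}\backslash\{0\},\ B\in\mathbb{R}^{p\times m}\}$.

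The crux is bounding $\mathrm{VCdim}(\mathcal{E})$ by a constant multiple of $pm$. Membership in $E_{U,B}$ is the sign of $q_{U,B}(x,y):=\iprod{U^Tx}{y-B^Tx}=\sum_{k=1}^m(u_k^Tx)(y_k-b_k^Tx)$, where $u_k,b_k$ denote the columns of $U,B$. For each fixed $(x,y)$ this is a polynomial in the parameter $(U,B)\in\mathbb{R}^{2pm}$ of degree at most $2$ (linear in $U$, bilinear in $(U,B)$). The temptation is to write $q_{U,B}(x,y)=\iprod{U}{xy^T}-\iprod{UB^T}{xx^T}$ and conclude that $\mathcal{E}$ lives in an ambient function space of dimension $pm+p^2$; the improvement comes precisely from noting that the coefficient matrix $UB^T$ of the quadratic-in-$x$ term has rank at most $m$, which is exactly the information retained by treating $q_{U,B}$ as a constant-degree polynomial in the $2pm$ true parameters rather than as a generic quadratic form in the $p^2$ entries. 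By the classical bound on the VC dimension of a class cut out by the sign of a polynomial of bounded degree in a $D$-dimensional parameter (Warren's theorem; see also Goldberg--Jerrum and Anthony--Bartlett), with $D=2pm$ and degree $2$, one obtains $\mathrm{VCdim}(\mathcal{E})\le C_0\,pm$ for an absolute constant $C_0$ (deleting the single point $U=0$ only shrinks the class).

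With the VC bound in hand the rest is the standard pipeline. Symmetrization gives $\mathbb{E}\sup_{C\in\mathcal{E}}|(\mathbb{P}_n-\mathbb{P})(C)|\le 2\,\mathbb{E}\sup_{C\in\mathcal{E}}\big|\tfrac1n\sum_{i=1}^n\sigma_i\mathbb{I}\{Z_i\in C\}\big|$ with $Z_i=(X_i,Y_i)$; Haussler's uniform covering-number bound $N(\eta,\mathcal{E},L_2(\mathbb{Q}))\le(c/\eta)^{C_0 pm}$ together with Dudley's entropy integral then yields $\mathbb{E}\sup_{C\in\mathcal{E}}|(\mathbb{P}_n-\mathbb{P})(C)|\le C\sqrt{pm/n}$ with no extra logarithmic factor (the entropy integral $\int_0^1\sqrt{\log(c/\eta)}\,d\eta$ converges). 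Finally, the map $(Z_1,\dots,Z_n)\mapsto\sup_{C\in\mathcal{E}}|(\mathbb{P}_n-\mathbb{P})(C)|$ has bounded differences $1/n$, so McDiarmid's inequality gives $\sup_{C\in\mathcal{E}}|(\mathbb{P}_n-\mathbb{P})(C)|\le\mathbb{E}\sup_{C\in\mathcal{E}}|(\mathbb{P}_n-\mathbb{P})(C)|+\sqrt{\log(1/\delta)/(2n)}$ with probability at least $1-\delta$; combining the three displays yields the proposition (the stated $1-2\delta$ is mild slack, e.g. from also retaining the lower tail or from rescaling $\delta$).

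The only genuinely nontrivial step is the VC bound of order $pm$: one must recognize the sets $E_{U,B}$ as bounded-degree polynomial-threshold sets in the $2pm$-dimensional parameter, so that the $p^2$ apparent degrees of freedom in $x^TUB^Tx$ collapse to the $pm$ that actually parametrize $\mathcal{E}$. Everything downstream --- symmetrization, Haussler/Dudley, McDiarmid --- is routine, and the same template (with an extra union bound over support patterns, or a restricted covering argument for low-rank parameter sets) underlies Propositions \ref{prop:nonpar}, \ref{prop:sparse} and \ref{prop:rank}.
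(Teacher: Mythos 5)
Your proposal is correct and follows essentially the same route as the paper: reduce via $|\inf_U a_U-\inf_U b_U|\le\sup_U|a_U-b_U|$ to a uniform deviation bound over the sets $\{\iprod{U^TX}{Y-B^TX}\geq 0\}$, bound their VC dimension by a constant times $pm$ using Warren's theorem on sign patterns of degree-$2$ polynomials in the true parameters, and finish with the $\sqrt{V/n}$ expectation bound plus McDiarmid. The only cosmetic difference is that the paper packages the Warren step as Lemma \ref{lem:VCrank}, rewriting $\iprod{U^TX}{Y-B^TX}=\Tr(WZ^T)$ with the low-rank block matrix $W(U,B)$ and counting $r(d_1+d_2)$ parameters from its rank decomposition, whereas you apply the polynomial-threshold bound directly to the $(U,B)$ parametrization in $\mathbb{R}^{2pm}$ --- the same idea in a different wrapper.
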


Then, define the quantity
\begin{equation}
\kappa=\inf_{v\neq 0}\frac{\|\Sigma^{1/2}v\|}{\|v\|}.\label{eq:multiple-kappa}
\end{equation}
With Proposition \ref{prop:multiple} and the definition of $\kappa$, we are ready to present the main result. Consider i.i.d. observations from $\mathbb{P}_{(\epsilon,B,Q)}=(1-\epsilon)P_B+\epsilon Q$.

\begin{thm}\label{thm:multiple}
Consider the estimator $\wh{B}$.
Assume that $\epsilon^2+\frac{pm}{n}$ is sufficiently small. Then, we have
\begin{eqnarray}
\label{eq:multiple-pred}\Tr((\wh{B}-B)^T\Sigma(\wh{B}-B)) &\leq& C\sigma^2\left(\frac{pm}{n}\vee\epsilon^2\right), \\
\label{eq:multiple-frob}\fnorm{\wh{B}-B}^2 &\leq& C\frac{\sigma^2}{\kappa^2}\left(\frac{pm}{n}\vee\epsilon^2\right),
\end{eqnarray}
with $\mathbb{P}_{(\epsilon,B,Q)}$-probability at least $1-\exp\left(-C'\left(pm+n\epsilon^2\right)\right)$ uniformly over all $Q$ and $B\in\mathbb{R}^{p\times m}$, where $C,C'$ are some absolute constants.
\end{thm}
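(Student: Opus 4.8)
The plan is to follow the same template that underlies Theorems~\ref{thm:nonpar}, \ref{thm:sparse} and \ref{thm:trace}: combine the uniform convergence bound of Proposition~\ref{prop:multiple} with a population-level identifiability argument that says two parameters whose depths are both large must be close. The first step is to establish the population inequality: for the true parameter $B$ under $P_B$, one has $\D_{\mathbb{R}^{p\times m}\backslash\{0\}}(B,P_B)=1/2$ (by the symmetry of the Gaussian noise $Z$ given $X$, each event $\iprod{U^TX}{Y-B^TX}\geq 0$ has probability exactly $1/2$). Conversely, I would show a modulus-type bound: there is a constant $c_0>0$ such that for any $B'\in\mathbb{R}^{p\times m}$,
\begin{equation*}
\D_{\mathbb{R}^{p\times m}\backslash\{0\}}(B',P_B)\leq \frac{1}{2}-c_0\left(\frac{\|\Sigma^{1/2}(B'-B)\|_{\rm F}}{\sigma}\wedge 1\right),
\end{equation*}
i.e. the depth of a bad parameter drops below $1/2$ by an amount proportional to the (truncated) prediction error. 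This is the key deterministic fact; I would prove it by choosing the witnessing direction $U$ adaptively. Writing $\Delta=B'-B$, pick $U$ so that $U^TX$ aligns with $\Delta^TX$ (e.g. $U=\Delta$, or a rank-one surrogate), so that $\iprod{U^TX}{Y-B'^TX}=\iprod{\Delta^TX}{\sigma Z}-\|\Delta^TX\|^2$; conditionally on $X$ this is a Gaussian with negative mean $-\|\Delta^TX\|^2$ and standard deviation $\sigma\|\Delta^TX\|$, so the probability it is nonnegative is $\Phi(-\|\Delta^TX\|/\sigma)\leq 1/2-c\,(\|\Delta^TX\|/\sigma\wedge 1)$. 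Taking expectation over $X\sim N(0,\Sigma)$ and using Jensen / a Paley--Zygmund-type anticoncentration argument on $\|\Delta^TX\|$ (whose second moment is $\Tr(\Delta^T\Sigma\Delta)$) converts this into the stated bound in terms of $\|\Sigma^{1/2}\Delta\|_{\rm F}$.

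With that in hand, the stochastic part is routine. Apply Proposition~\ref{prop:multiple} to the contaminated empirical measure $\mathbb{P}_n$ drawn from $\mathbb{P}_{(\epsilon,B,Q)}$: with probability at least $1-\exp(-C'(pm+n\epsilon^2))$ we have $\sup_{B'}|\D(B',\mathbb{P}_n)-\D(B',\mathbb{P}_{(\epsilon,B,Q)})|\lesssim \sqrt{pm/n}+\epsilon$ (choosing $\delta$ appropriately). Separately, since $\TV(\mathbb{P}_{(\epsilon,B,Q)},P_B)\leq\epsilon$, we have $|\D(B',\mathbb{P}_{(\epsilon,B,Q)})-\D(B',P_B)|\leq\epsilon$ for every $B'$, because the depth is an infimum of probabilities of fixed events and total variation controls each such probability uniformly. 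Combining, $|\D(B',\mathbb{P}_n)-\D(B',P_B)|\lesssim \sqrt{pm/n}+\epsilon=:\eta$ uniformly in $B'$. Now use optimality of $\wh B$: $\D(\wh B,\mathbb{P}_n)\geq \D(B,\mathbb{P}_n)\geq \D(B,P_B)-\eta=1/2-\eta$. Hence $\D(\wh B,P_B)\geq \D(\wh B,\mathbb{P}_n)-\eta\geq 1/2-2\eta$. Plugging into the population modulus bound gives $c_0\,(\|\Sigma^{1/2}(\wh B-B)\|_{\rm F}/\sigma\wedge 1)\leq 2\eta$, so as long as $\eta$ is smaller than a constant (which is exactly the hypothesis that $\epsilon^2+pm/n$ is sufficiently small, ensuring the truncation is inactive), $\|\Sigma^{1/2}(\wh B-B)\|_{\rm F}\lesssim \sigma\eta$, and squaring yields $\Tr((\wh B-B)^T\Sigma(\wh B-B))\lesssim \sigma^2(\eta^2)\asymp\sigma^2(pm/n\vee\epsilon^2)$, which is \eqref{eq:multiple-pred}. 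The Frobenius bound \eqref{eq:multiple-frob} then follows immediately from \eqref{eq:multiple-pred} and the definition \eqref{eq:multiple-kappa} of $\kappa$, since $\|\Sigma^{1/2}v\|\geq\kappa\|v\|$ applied with $v$ ranging over the columns of $\wh B-B$ gives $\|\Sigma^{1/2}(\wh B-B)\|_{\rm F}\geq\kappa\fnorm{\wh B-B}$.

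The main obstacle is the population modulus-of-continuity bound in the first paragraph, specifically getting the right linear-in-$\|\Sigma^{1/2}\Delta\|_{\rm F}$ dependence (not merely a bound vanishing as $\Delta\to 0$) uniformly over all $\Delta$, including the regime where $\|\Delta^TX\|/\sigma$ is large only on a small-probability set of $X$. The delicate point is the anticoncentration step: $\Phi(-\|\Delta^TX\|/\sigma)$ is a bounded concave-near-zero function of $\|\Delta^TX\|$, so $\mathbb{E}_X\Phi(-\|\Delta^TX\|/\sigma)$ need not be $\frac12$ minus something proportional to $\sqrt{\mathbb{E}\|\Delta^TX\|^2}$ unless $\|\Delta^TX\|^2$ has enough mass away from both $0$ and $\infty$; here Gaussianity of $X$ is what saves us, since $\|\Delta^TX\|^2/\Tr(\Delta^T\Sigma\Delta)$ is a fixed quadratic form with controlled lower tail. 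A secondary technical subtlety is making the choice of witnessing $U$ legitimately land in $\mathbb{R}^{p\times m}\backslash\{0\}$ and handle the multivariate inner product $\iprod{U^TX}{\cdot}$ rather than a scalar product — one either uses $U=\Delta$ directly, or, if a cleaner one-dimensional reduction is wanted, a rank-one $U=\xi(\Delta^T X)^\top$-type construction (made $X$-independent by a covering/discretization argument already subsumed in Proposition~\ref{prop:multiple}). I expect the rest to be bookkeeping.
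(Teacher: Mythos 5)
Your proposal is correct and, at its core, is the paper's own argument: uniform convergence of the empirical depth (Proposition \ref{prop:multiple}, with $\delta=\exp(-(pm+n\epsilon^2))$) plus a population curvature bound, applied through the optimality of $\wh B$ and the fact that $\D(B,P_B)\geq 1/2$. Your ``modulus'' inequality, including the witness $U=\Delta$, the reduction to $\mathbb{E}\Phi(t\sqrt{Y})$ with $Y=\|\Delta^TX\|^2/\Tr(\Delta^T\Sigma\Delta)$, and the Paley--Zygmund anticoncentration via $\mathbb{E}Y=1$, $\mathbb{E}Y^2\leq 3$, is precisely the paper's Lemma \ref{lem:curve-m} (stated there in contrapositive form: depth $\geq\tfrac12-\eta$ with $\eta<3/20$ forces $\sqrt{\Tr(\Delta^T\Sigma\Delta)}\leq C\sigma\eta$), and your identification of the anticoncentration step as the delicate point matches exactly how the paper resolves it. The one place you deviate is the treatment of contamination: the paper conditions on the binomial split of the sample into $n_1$ clean and $n_2$ contaminated points, bounds $n_2/n_1$ by Lemma \ref{lem:ratio}, and uses the counting inequality $n_1\D(\cdot,\text{clean})\geq n\D(\cdot,\text{all})-n_2$ to pass between the full-sample and clean-sample depths, as in the chain (\ref{eq:dU1})--(\ref{eq:dU2}) leading to (\ref{eq:interface}); you instead compare the empirical depth to the contaminated population $\mathbb{P}_{(\epsilon,B,Q)}$ and then use $\sup_{B'}|\D(B',\mathbb{P}_{(\epsilon,B,Q)})-\D(B',P_B)|\leq\TV(\mathbb{P}_{(\epsilon,B,Q)},P_B)\leq\epsilon$, which is valid since the depth is an infimum of probabilities of events. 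Your TV route is slightly cleaner (no binomial concentration needed and the $\epsilon/(1-\epsilon)$ bookkeeping disappears), while the paper's split is the device it reuses verbatim across all the theorems, including the nonparametric case where the bias term must be threaded through the same chain; both yield the same $\sqrt{pm/n}+\epsilon$ perturbation and hence the same rates.
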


We first remark that the rates for both prediction loss and squared Frobenius loss are minimax optimal. This can be easily seen from Theorem \ref{thm:lower} and classical multivariate regression results in the literature.

One can also use univariate regression depth to estimate each column of $B$ separately. This leads to the rates $\sigma^2\left(\frac{pm}{n}\vee(m\epsilon^2)\right)$ and $\frac{\sigma^2}{\kappa^2}\left(\frac{pm}{n}\vee(m\epsilon^2)\right)$ for the two loss functions, respectively. Both rates are clearly sub-optimal because of the extra factor of $m$ before $\epsilon^2$. Therefore, in the setting of $\epsilon$-contamination model, even when there is no structural dependence between the columns of $B$, the matrix $B$ needs to be estimated jointly. 

When $p=1$, the multivariate regression depth is closely related to Tukey's halfspace depth (\ref{eq:tukey-depth}). The rate of convergence of Tukey's median was studied by \cite{chen2015robust} in the setting of $\epsilon$-contamination model. Theorem \ref{thm:multiple} can be viewed as an extension of their result for $p>1$.

\subsection{Multivariate Linear Regression with Group Sparsity}

We extend the multivariate regression problem $Y=B^TX+\sigma Z$ to a group sparse setting. The regression matrix $B$ is assumed to belong to the following space
$$\Xi_s=\left\{B\in\mathbb{R}^{p\times m}\backslash\{0\}: \sum_{j=1}^p\mathbb{I}\{B_{j*}\neq 0\}\leq s\right\}.$$
We take advantage of the group sparse structure and define the estimator by
$$\wh{B}=\argmax_{B\in\Xi_s}\D_{\Xi_{2s}}(B,\{(X_i,Y_i)\}_{i=1}^n).$$

The uniform convergence of the multivariate regression depth with group sparse structure is given by the following proposition.
\begin{proposition}\label{prop:group}
For any probability measure $\mathbb{P}$ and its associated empirical measure $\mathbb{P}_n$, we have for any $\delta>0$,
$$\sup_{B\in\Xi_s}\left|\D_{\Xi_{2s}}(B,\mathbb{P}_n)-\D_{\Xi_{2s}}(B,\mathbb{P})\right|\leq C\sqrt{\frac{ms+s\log\left(\frac{ep}{s}\right)}{n}}+\sqrt{\frac{\log(1/\delta)}{2n}},$$
with probability at least $1-2\delta$, where $C>0$ is some absolute constant.
\end{proposition}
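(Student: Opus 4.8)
The plan is to remove the infimum over $U$ and then control a VC-type empirical process, exactly paralleling the proofs of Propositions~\ref{prop:sparse} and~\ref{prop:multiple}. Since $|\inf_U f(U)-\inf_U h(U)|\le\sup_U|f(U)-h(U)|$ for any two functions on a common index set, and both the empirical and population depths are such infima, we obtain
$$\sup_{B\in\Xi_s}\left|\D_{\Xi_{2s}}(B,\mathbb{P}_n)-\D_{\Xi_{2s}}(B,\mathbb{P})\right|\ \le\ \sup_{g\in\mathcal{G}}\left|(\mathbb{P}_n-\mathbb{P})g\right|=:Z,$$
where $\mathcal{G}=\{(x,y)\mapsto\mathbb{I}\{\iprod{U^Tx}{y-B^Tx}\ge0\}:B\in\Xi_s,\,U\in\Xi_{2s}\}$. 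It therefore suffices to bound $Z$.

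Each $g\in\mathcal{G}$ is $\{0,1\}$-valued, so replacing one of the $n$ observations changes $Z$ by at most $1/n$; McDiarmid's inequality gives $Z\le\Expect Z+\sqrt{\log(1/\delta)/(2n)}$ with probability at least $1-\delta$, and a routine union bound over the two one-sided estimates yields the stated probability $1-2\delta$. To bound $\Expect Z$, symmetrize: $\Expect Z\le 2\,\Expect\sup_{g\in\mathcal{G}}|n^{-1}\sum_{i=1}^n\sigma_i g(X_i,Y_i)|$, and since $\mathcal{G}$ is a $\{0,1\}$-valued class, this expected Rademacher complexity is $\lesssim\sqrt{\mathrm{VC}(\mathcal{G})/n}$ by Haussler's bound on the covering numbers of VC classes (or Dudley's entropy integral). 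So everything reduces to estimating $\mathrm{VC}(\mathcal{G})$.

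The crux is the bound $\mathrm{VC}(\mathcal{G})\le C(ms+s\log(ep/s))$. Writing $\iprod{U^Tx}{y-B^Tx}=\sum_{k=1}^m(u_k^Tx)(y_k-b_k^Tx)$ with $u_k,b_k$ the $k$-th columns of $U,B$, the expression inside the indicator is, for each fixed $(x,y)$, a polynomial of degree $2$ in the entries of the pair $(B,U)$ (its sign is unchanged under positive rescaling of $U$, so the cone structure of $\Xi_{2s}$ is harmless). Decompose $\mathcal{G}=\bigcup_{(S_1,S_2)}\mathcal{G}_{S_1,S_2}$ over the row-supports $S_1\subset[p]$ of $B$ with $|S_1|\le s$ and $S_2\subset[p]$ of $U$ with $|S_2|\le 2s$; there are at most $\binom{p}{s}\binom{p}{2s}\le(ep/s)^{3s}$ such pairs. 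On a fixed pair, $\mathcal{G}_{S_1,S_2}$ consists of indicators of the sign of a fixed-shape degree-$2$ polynomial in at most $3ms$ real parameters (the entries of $B$ on $S_1$ and of $U$ on $S_2$), so the classical VC bound for polynomial-threshold concept classes gives $\mathrm{VC}(\mathcal{G}_{S_1,S_2})\le Cms$. Using that the VC dimension of a union of $N$ classes of VC dimension $\le V$ is $O(V+\log N)$, we get $\mathrm{VC}(\mathcal{G})\le C(ms+s\log(ep/s))$; plugging this into the previous paragraph proves the proposition.

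The main obstacle is this last step: one has to count the free parameters correctly after fixing the supports — here $3ms$, obtained by exploiting that $B$ and $U$ are \emph{row}-sparse rather than entrywise sparse — and then combine the within-support VC estimate (the mechanism behind Proposition~\ref{prop:multiple}, which supplies the $ms$ term) with the entropy price of the union over supports (the mechanism behind Proposition~\ref{prop:sparse}, which supplies the $s\log(ep/s)$ term). The inf-removal, bounded-differences, and symmetrization steps are entirely routine.
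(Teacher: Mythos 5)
Your proposal is correct, and its skeleton (removing the infimum, McDiarmid's inequality, a VC bound on the expected supremum, Warren-type counting, decomposition over row supports) is the same as the paper's; the difference lies in where the union over supports is paid for. The paper never bounds the VC dimension of the full class $\mathcal{G}$: it fixes a support pair $(S_1,S_2)$, rewrites $\iprod{U^TX}{Y-B^TX}=\Tr(WZ^T)$ with the block matrix $W=W(U,B)$ of (\ref{eq:matrix-W}), applies its low-rank threshold bound (Lemma \ref{lem:VCrank}) to get a within-support VC dimension of order $ms$, and then runs McDiarmid separately on each of the at most $e^{4s\log\left(\frac{ep}{s}\right)}$ support pairs, so the $s\log\left(\frac{ep}{s}\right)$ term enters through the deviation probability $1-2e^{-2nt^2+4s\log\left(\frac{ep}{s}\right)}$ and the choice of $t$. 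You instead absorb the union into the combinatorial complexity itself: a direct parameter count on a fixed support pair ($3ms$ parameters, degree-$2$ polynomial thresholds) gives a within-support VC dimension of order $ms$, and the Sauer-lemma fact that a union of $N$ classes of VC dimension at most $V$ has VC dimension $O(V+\log N)$ yields a global VC dimension of order $ms+s\log\left(\frac{ep}{s}\right)$, after which a single application of McDiarmid suffices (and in fact gives probability $1-\delta$, slightly better than the stated $1-2\delta$, since only an upper deviation of the supremum is needed). Both routes are valid and give the same rate; yours needs the union-of-classes VC bound as an extra (true) ingredient but packages the support entropy more cleanly, while the paper's route reuses verbatim the low-rank machinery of Lemma \ref{lem:VCrank} that also drives Propositions \ref{prop:rank} and \ref{prop:reduced}, and mirrors the probability-level union bound already used for Proposition \ref{prop:sparse}.
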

Proposition \ref{prop:group} is an extension of both Proposition \ref{prop:sparse} and Proposition \ref{prop:multiple}. The rate consists of two parts. The first part $\frac{ms}{n}$ is determined by the number of parameters. Since there are only $s$ nonzero rows of $B$, the number of parameters is $ms$. The second part $\frac{s\log\left(\frac{ep}{s}\right)}{n}$ is determined by the model selection complexity. Given the sparsity level $s$, there are ${p\choose s}$ possible models with different row supports. This contributes to the rate $n^{-1}\log{p\choose s}\asymp \frac{s\log\left(\frac{ep}{s}\right)}{n}$.

Define the quantity
\begin{equation}
\kappa=\inf_{|\supp(v)|=2s}\frac{\|\Sigma^{1/2}v\|}{\|v\|}.\label{eq:group-est}
\end{equation}
We are now ready to give the main result. Consider i.i.d. observations from $\mathbb{P}_{(\epsilon,B,Q)}=(1-\epsilon)P_B+\epsilon Q$.

\begin{thm}\label{thm:group}
Consider the estimator $\wh{B}$.
Assume that $\epsilon^2+\frac{ms+s\log\left(\frac{ep}{s}\right)}{n}$ is sufficiently small. Then, we have
\begin{eqnarray*}
\Tr((\wh{B}-B)^T\Sigma(\wh{B}-B)) &\leq& C\sigma^2\left(\frac{ms+s\log\left(\frac{ep}{s}\right)}{n}\vee\epsilon^2\right), \\
\fnorm{\wh{B}-B}^2 &\leq& C\frac{\sigma^2}{\kappa^2}\left(\frac{ms+s\log\left(\frac{ep}{s}\right)}{n}\vee\epsilon^2\right),
\end{eqnarray*}
with $\mathbb{P}_{(\epsilon,B,Q)}$-probability at least $1-\exp\left(-C'\left(ms+s\log\left(\frac{ep}{s}\right)+n\epsilon^2\right)\right)$ uniformly over all $Q$ and $B\in\Xi_s$, where $C,C'$ are some absolute constants.
\end{thm}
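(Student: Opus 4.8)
The plan is to follow the template of the proofs of Theorems~\ref{thm:sparse} and \ref{thm:multiple}: pass from the empirical depth to the population depth under the clean model, evaluate the clean‑model depth at the truth, invoke optimality of $\wh B$ to show it has almost maximal depth, and then convert ``almost maximal depth'' into ``close to $B^*$'' through a one‑dimensional anti‑concentration estimate applied to the test direction $U=\wh B-B^*$.

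Fix the true parameter $B^*\in\Xi_s$ and write $\mathbb{P}=(1-\epsilon)P_{B^*}+\epsilon Q$. Choosing $\delta=\exp(-c(ms+s\log(ep/s)+n\epsilon^2))$ in Proposition~\ref{prop:group}, on an event of probability at least $1-\exp(-C'(ms+s\log(ep/s)+n\epsilon^2))$ we have $\sup_{B\in\Xi_s}|\D_{\Xi_{2s}}(B,\mathbb{P}_n)-\D_{\Xi_{2s}}(B,\mathbb{P})|\le\eta$ with $\eta^2\lesssim\frac{ms+s\log(ep/s)}{n}\vee\epsilon^2$. Since $|\mathbb{P}(A)-P_{B^*}(A)|\le\epsilon$ for every event $A$, also $|\D_{\Xi_{2s}}(B,\mathbb{P})-\D_{\Xi_{2s}}(B,P_{B^*})|\le\epsilon$ for all $B$. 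Under $P_{B^*}$ one has $Y-B^{*T}X=\sigma Z$ with $Z\sim N(0,I_m)$ independent of $X$, so conditionally on $X$ the quantity $\iprod{U^TX}{Y-B^{*T}X}$ is a centered Gaussian; hence $P_{B^*}\{\iprod{U^TX}{Y-B^{*T}X}\ge 0\}\ge 1/2$ for every $U$, giving $\D_{\Xi_{2s}}(B^*,P_{B^*})\ge 1/2$. Combining these with $\D_{\Xi_{2s}}(\wh B,\mathbb{P}_n)\ge\D_{\Xi_{2s}}(B^*,\mathbb{P}_n)$ yields $\D_{\Xi_{2s}}(\wh B,P_{B^*})\ge 1/2-2\eta-2\epsilon$.

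The key step is the matching upper bound. Assume $\wh B\ne B^*$ (otherwise there is nothing to prove) and set $\wh\Delta=\wh B-B^*$; since $\wh B$ and $B^*$ each have at most $s$ nonzero rows, $\wh\Delta\in\Xi_{2s}$ is an admissible test direction. Using $Y-\wh B^TX=-\wh\Delta^TX+\sigma Z$ and conditioning on $X$, $P_{B^*}\{\iprod{\wh\Delta^TX}{Y-\wh B^TX}\ge 0\}=\Expect\,\bar\Phi(\|\wh\Delta^TX\|/\sigma)$ where $\bar\Phi=1-\Phi$. Since $\bar\Phi(t)\le\tfrac12-\phi(1)(t\wedge 1)$ for $t\ge 0$ (with $\phi$ the standard normal density), we get $\D_{\Xi_{2s}}(\wh B,P_{B^*})\le\tfrac12-\phi(1)\,\Expect[(\|\wh\Delta^TX\|/\sigma)\wedge 1]$. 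The remaining, and most delicate, ingredient is an anti‑concentration bound for Gaussian vector norms: for any fixed $\Delta$, with $\rho^2=\Tr(\Delta^T\Sigma\Delta)=\Expect\|\Delta^TX\|^2$, one has $\Expect[(\|\Delta^TX\|/\sigma)\wedge 1]\ge c_2\min(\rho/\sigma,1)$; I would obtain this from the Paley--Zygmund inequality applied to $\|\Delta^TX\|^2$ together with the Gaussian fourth‑moment bound $\Expect\|\Delta^TX\|^4\le 3(\Expect\|\Delta^TX\|^2)^2$. Because this estimate is deterministic in $\Delta$, it applies to the data‑dependent $\wh\Delta$ with no additional union bound.

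Chaining the two bounds gives $\min(\rho/\sigma,1)\lesssim\eta+\epsilon$; since $\epsilon^2+\frac{ms+s\log(ep/s)}{n}$ is assumed small, the right‑hand side is $<1$, so $\Tr(\wh\Delta^T\Sigma\wh\Delta)=\rho^2\lesssim\sigma^2(\eta+\epsilon)^2\lesssim\sigma^2\big(\tfrac{ms+s\log(ep/s)}{n}\vee\epsilon^2\big)$, which is the prediction bound. For the Frobenius bound, $\wh\Delta$ has at most $2s$ nonzero rows, so each of its $m$ columns is a $2s$‑sparse vector in $\mathbb{R}^p$; applying $\|\Sigma^{1/2}v\|\ge\kappa\|v\|$ column by column and summing gives $\Tr(\wh\Delta^T\Sigma\wh\Delta)\ge\kappa^2\fnorm{\wh\Delta}^2$, whence $\fnorm{\wh B-B^*}^2\le C\tfrac{\sigma^2}{\kappa^2}\big(\tfrac{ms+s\log(ep/s)}{n}\vee\epsilon^2\big)$. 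The substantive difficulty is the anti‑concentration step; the rest is the argument for sparse linear regression (Theorem~\ref{thm:sparse}) merged with that for multivariate regression (Theorem~\ref{thm:multiple}), with Proposition~\ref{prop:group} supplying the combined complexity term.
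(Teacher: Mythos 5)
Your proof is correct and delivers the stated bounds, and its overall architecture (uniform convergence of the empirical depth, near-maximal depth of $\wh B$ under the clean model, a second-moment anti-concentration step to convert depth deficiency into a prediction-loss bound, then the restricted eigenvalue $\kappa$ for the Frobenius bound) matches the paper's. The genuine difference is in how the contamination is handled: the paper conditions on the clean/contaminated decomposition of the sample, controls $n_2/n_1$ via Lemma~\ref{lem:ratio}, and runs the chain (\ref{eq:dU1})--(\ref{eq:dU2}) by applying Proposition~\ref{prop:group} to the empirical measure of the clean subsample, arriving at the analogue of (\ref{eq:interface}); you instead apply Proposition~\ref{prop:group} directly to the empirical measure of the contaminated mixture and transfer to $P_{B}$ through $\TV\big((1-\epsilon)P_{B}+\epsilon Q,\,P_{B}\big)\le\epsilon$, which is applied uniformly over the test directions inside the infimum. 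This removes the need for Lemma~\ref{lem:ratio} and the binomial conditioning while yielding the same $\epsilon$-order penalty, so it is a mild simplification rather than a loss. Your ``delicate'' anti-concentration ingredient is precisely the paper's Lemma~\ref{lem:curve-m}: the Cauchy--Schwarz lower-tail inequality (\ref{eq:lower-tail}) there is the Paley--Zygmund inequality you invoke, used with the same Gaussian fourth-moment bound $\mathbb{E}\|\Delta^TX\|^4\le 3\big(\mathbb{E}\|\Delta^TX\|^2\big)^2$; your capped bound $1-\Phi(t)\le\tfrac12-\phi(1)(t\wedge 1)$ streamlines the paper's route through the monotone function $g$ and the ``$t\le 4$'' step, and, as you note, both arguments apply pointwise to the realized $\wh\Delta=\wh B-B\in\Xi_{2s}$ with no union bound. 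The column-by-column use of $\kappa$ for the Frobenius bound is also how the paper concludes. So: correct, same skeleton, with a cleaner population-level TV treatment of the contamination and an inline re-derivation of Lemma~\ref{lem:curve-m}.
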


Theorem \ref{thm:group} is an extension of both Theorem \ref{thm:sparse} and Theorem \ref{thm:multiple}. When $m=1$, the problem is reduced to sparse linear regression, and $\wh{B}$ in (\ref{eq:group-est}) is the same as $\hat{\beta}$ in (\ref{eq:sparse-est}). Thus, the rates given by Theorem \ref{thm:group} recovers those of Theorem \ref{thm:sparse}. When $s=1$, this is the setting of multivariate linear regression without the group sparse structure, and the rates of Theorem \ref{thm:group} recover those of Theorem \ref{thm:multiple}.

The rates given by Theorem \ref{thm:group} are minimax optimal by Theorem \ref{thm:lower} and \cite{lounici2011oracle}.

\subsection{Reduced Rank Regression}

The final application is for reduced rank regression. In the multivariate linear regression setting 
 $Y=B^TX+\sigma Z$, the regression matrix $B$ is assumed to be low-rank. In particular, $B\in\mathcal{A}_r$, where $\mathcal{A}_r$ is defined in (\ref{eq:lrset}), except that the dimension $p_1\times p_2$ is replaced by $p\times m$.
 Some recent progresses on this topic were made by \cite{bunea2011optimal,ma2014adaptive} and references therein.
 
Define the estimator by
\begin{equation}
\wh{B}=\argmax_{B\in\mathcal{A}_r}\D_{\mathcal{A}_{2r}}(B,\{(X_i,Y_i)\}_{i=1}^n).\label{eq:rrr-est}
\end{equation}

We give the uniform convergence of the empirical depth function.
\begin{proposition}\label{prop:reduced}
For any probability measure $\mathbb{P}$ and its associated empirical measure $\mathbb{P}_n$, we have for any $\delta>0$,
$$\sup_{B\in\mathcal{A}_r}\left|\D_{\mathcal{A}_{2r}}(B,\mathbb{P}_n)-\D_{\mathcal{A}_{2r}}(B,\mathbb{P})\right|\leq C\sqrt{\frac{r(p+m)}{n}}+\sqrt{\frac{\log(1/\delta)}{2n}},$$
with probability at least $1-2\delta$, where $C>0$ is some absolute constant.
\end{proposition}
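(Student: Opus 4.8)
I would argue exactly as for the uniform-convergence bounds for low-rank trace regression (Proposition \ref{prop:rank}) and multivariate linear regression (Proposition \ref{prop:multiple}), merging the two structures. The plan has four steps: (i) pass from the depth functions to a uniform deviation of $\mathbb{P}_n$ over a class of half-spaces; (ii) concentrate that deviation by bounded differences; (iii) bound its expectation by the VC dimension of the half-space class; and (iv) show this VC dimension is $\lesssim r(p+m)$.

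For step (i) I would use $|\inf_a f(a)-\inf_a g(a)|\le \sup_a|f(a)-g(a)|$ on the infimum over $U\in\mathcal{A}_{2r}$ in the definition of $\D_{\mathcal{A}_{2r}}$, which gives
$$\sup_{B\in\mathcal{A}_r}\left|\D_{\mathcal{A}_{2r}}(B,\mathbb{P}_n)-\D_{\mathcal{A}_{2r}}(B,\mathbb{P})\right|\;\le\;\sup_{H\in\mathcal{H}}\left|(\mathbb{P}_n-\mathbb{P})(H)\right|,$$
where $\mathcal{H}=\{H_{U,B}:U\in\mathcal{A}_{2r},\,B\in\mathcal{A}_r\}$ and $H_{U,B}=\{(x,y)\in\mathbb{R}^p\times\mathbb{R}^m:\iprod{U^Tx}{y-B^Tx}\ge 0\}$. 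For steps (ii)--(iii): replacing one observation among $(X_1,Y_1),\dots,(X_n,Y_n)$ changes $\sup_{H\in\mathcal{H}}(\mathbb{P}_n-\mathbb{P})(H)$ by at most $1/n$, so McDiarmid's bounded-differences inequality bounds it by its expectation plus $\sqrt{\log(1/\delta)/(2n)}$ with probability $1-\delta$; doing the same for $\sup_{H\in\mathcal{H}}(\mathbb{P}-\mathbb{P}_n)(H)$ and taking a union bound gives the two-sided statement with probability $1-2\delta$. Then symmetrization bounds each expectation by twice the Rademacher complexity of $\mathcal{H}$, which for a class of sets of VC dimension $d$ is at most $C\sqrt{d/n}$ over $n$ points (Dudley's entropy integral with Haussler's covering bound for VC classes). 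So everything reduces to bounding $\mathrm{VC}(\mathcal{H})$.

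For step (iv) I would factor $U=AC^T$ with $A\in\mathbb{R}^{p\times 2r}$, $C\in\mathbb{R}^{m\times 2r}$, and $B=ED^T$ with $E\in\mathbb{R}^{p\times r}$, $D\in\mathbb{R}^{m\times r}$ (this ranges over all of $\mathcal{A}_{2r}$ and $\mathcal{A}_r$). For fixed $(x,y)$,
$$\iprod{U^Tx}{y-B^Tx}\;=\;\sum_{i=1}^{2r}(A^Tx)_i\,(C^Ty)_i\;-\;\sum_{i=1}^{2r}\sum_{j=1}^{r}(A^Tx)_i\,(E^Tx)_j\,(C^TD)_{ij},$$
a polynomial of total degree at most $4$ in the $3r(p+m)$ real entries of $(A,C,E,D)$. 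Hence membership in a set of $\mathcal{H}$ is decided by the sign of a fixed-degree polynomial in $O(r(p+m))$ real parameters, and a Warren-type bound on the number of sign patterns such a polynomial realizes on $n$ points shows no set of size $Cr(p+m)$ can be shattered; that is, $\mathrm{VC}(\mathcal{H})\lesssim r(p+m)$. Combined with steps (i)--(iii) this yields the claimed bound.

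\textbf{Main obstacle.} The one step with content is (iv). Since $\mathcal{A}_r$ and $\mathcal{A}_{2r}$ are not linear spaces, $\mathcal{H}$ is not just the family of nonnegativity sets of functions from a fixed finite-dimensional linear space, so one cannot simply read off the VC dimension as that dimension; one must instead exploit the bilinear low-rank factorization and invoke the polynomial-parameterization VC bound, being careful that the parameter count stays $O(r(p+m))$ and the degree is an absolute constant, so that no extraneous $\log p$ or $\log m$ appears. The cruder alternative --- covering the Grassmannians carrying the column spaces of $U$ and $B$ and treating $\mathcal{H}$ as a finite union of linear-space pieces --- would produce precisely such logarithmic factors and would need an extra step to remove them.
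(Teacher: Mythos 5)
Your proposal is correct and follows essentially the same route as the paper: reduce the depth deviation to a uniform deviation over a class of sets cut out by signs of a constant-degree polynomial in $O(r(p+m))$ low-rank factor parameters, bound its VC dimension via a Warren-type sign-pattern argument, and combine McDiarmid's inequality with the VC bound on the expected deviation. The only (inessential) difference is bookkeeping: the paper linearizes $\iprod{U^TX}{Y-B^TX}=\Tr(WZ^T)$ with a block matrix $W(U,B)$ of rank $O(r)$ and invokes its Lemma on rank-constrained halfspaces (degree-$2$ polynomial in the factors of $W$), whereas you factor $U$ and $B$ directly and work with a degree-$4$ polynomial in $3r(p+m)$ variables, which yields the same $O(r(p+m))$ VC bound up to the absolute constant.
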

Note that Proposition \ref{prop:reduced} is an extension of Proposition \ref{prop:multiple}. For a full rank matrix, $r=p\wedge m$, and therefore $r(p+m)\asymp pm$.

To present the error rate of (\ref{eq:rrr-est}), 
define the quantity
$$\kappa=\inf_{v\neq 0}\frac{\|\Sigma^{1/2}v\|}{\|v\|}.$$
Consider i.i.d. observations from $\mathbb{P}_{(\epsilon,B,Q)}=(1-\epsilon)P_B+\epsilon Q$.

\begin{thm}\label{thm:rrr}
Consider the estimator $\wh{B}$.
Assume that $\epsilon^2+\frac{r(p+m)}{n}$ is sufficiently small. Then, we have
\begin{eqnarray*}
\Tr((\wh{B}-B)^T\Sigma(\wh{B}-B)) &\leq& C\sigma^2\left(\frac{r(p+m)}{n}\vee\epsilon^2\right),\\
\fnorm{\wh{B}-B}^2 &\leq& C\frac{\sigma^2}{\kappa^2}\left(\frac{r(p+m)}{n}\vee\epsilon^2\right),\\
\nunorm{\wh{B}-B}^2 &\leq& C\frac{\sigma^2}{\kappa^2}\left(\frac{r^2(p+m)}{n}\vee r\epsilon^2\right),
\end{eqnarray*}
with $\mathbb{P}_{(\epsilon,B,Q)}$-probability at least $1-\exp\left(-C'\left(r(p+m)+n\epsilon^2\right)\right)$ uniformly over all $Q$ and $B\in\mathcal{A}_r$, where $C,C'$ are some absolute constants.
\end{thm}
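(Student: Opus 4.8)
The plan is to follow the template used for Theorem \ref{thm:multiple}, adapting it to the rank-constrained parameter space: the dimension count $pm$ is replaced by $r(p+m)$, and one extra step handles the nuclear-norm bound. Write $\U:=\mathcal{A}_{2r}$ and $\Delta:=\wh{B}-B$. If $\Delta=0$ the three bounds are trivial, so assume $\Delta\neq0$; then $\rank(\Delta)\leq\rank(\wh{B})+\rank(B)\leq 2r$, so $\Delta\in\mathcal{A}_{2r}=\U$ and $\Delta$ is an admissible direction in the infimum $(\ref{eq:depth})$ defining $\D_{\U}$. The first step is deterministic chaining. Since $\wh{B}$ maximizes $\D_{\U}(\cdot,\mathbb{P}_n)$ over $\mathcal{A}_r\ni B$, we have $\D_{\U}(\wh{B},\mathbb{P}_n)\geq\D_{\U}(B,\mathbb{P}_n)$; Proposition \ref{prop:reduced} with $\mathbb{P}=\mathbb{P}_{(\epsilon,B,Q)}$ gives, on an event of probability at least $1-2\delta$, the bound $|\D_{\U}(B',\mathbb{P}_n)-\D_{\U}(B',\mathbb{P}_{(\epsilon,B,Q)})|\leq\gamma_n:=C\sqrt{r(p+m)/n}+\sqrt{\log(1/\delta)/(2n)}$ simultaneously for $B'\in\{\wh{B},B\}$; since the depth is an infimum of probabilities of events and $\TV(\mathbb{P}_{(\epsilon,B,Q)},P_B)=\epsilon\,\TV(Q,P_B)\leq\epsilon$, one has $|\D_{\U}(B',\mathbb{P}_{(\epsilon,B,Q)})-\D_{\U}(B',P_B)|\leq\epsilon$; and $\D_{\U}(B,P_B)\geq\tfrac12$ since, conditionally on $X$, $\iprod{U^TX}{Y-B^TX}=\sigma\iprod{U^TX}{Z}$ is centered Gaussian in $Z$ and hence nonnegative with conditional probability at least $\tfrac12$. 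Chaining these four facts yields, on that event,
\begin{equation*}
\D_{\U}(\wh{B},P_B)\geq\D_{\U}(B,P_B)-2\gamma_n-2\epsilon\geq\tfrac12-2\gamma_n-2\epsilon .
\end{equation*}

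The core is a modulus-of-continuity bound on the population depth of the Gaussian model (the same bound underlies Theorems \ref{thm:multiple} and \ref{thm:group}). Take $U=\Delta$ in the infimum and write $Y-\wh{B}^TX=\sigma Z-\Delta^TX$, so that $\iprod{\Delta^TX}{Y-\wh{B}^TX}=\sigma\iprod{\Delta^TX}{Z}-\|\Delta^TX\|^2$ is, conditionally on $X$, distributed as $N\!\big(-\|\Delta^TX\|^2,\sigma^2\|\Delta^TX\|^2\big)$; hence
\begin{equation*}
\D_{\U}(\wh{B},P_B)\leq\Expect_X\big[\bar\Phi(\|\Delta^TX\|/\sigma)\big],\qquad\bar\Phi:=1-\Phi .
\end{equation*}
Using the elementary inequality $\bar\Phi(u)\leq\tfrac12-c_0(u\wedge1)$ together with the Gaussian small-ball bound $\Expect_X\big[\|\Delta^TX\|\wedge\sigma\big]\geq c_1\big(\|\Sigma^{1/2}\Delta\|_{\rm F}\wedge\sigma\big)$ then gives
\begin{equation*}
\D_{\U}(\wh{B},P_B)\leq\tfrac12-c_0c_1\Big(\tfrac{\|\Sigma^{1/2}\Delta\|_{\rm F}}{\sigma}\wedge1\Big).
\end{equation*}

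Comparing the displays of the two previous paragraphs forces $(\|\Sigma^{1/2}\Delta\|_{\rm F}/\sigma)\wedge1\leq(c_0c_1)^{-1}(2\gamma_n+2\epsilon)$. Choosing $\delta=\tfrac12\exp(-C'(r(p+m)+n\epsilon^2))$ makes $\log(1/\delta)\lesssim r(p+m)+n\epsilon^2$, hence $\gamma_n^2\lesssim r(p+m)/n\vee\epsilon^2$; the assumption that $\epsilon^2+r(p+m)/n$ is sufficiently small then pushes the right-hand side strictly below $1$, so the truncation at $1$ is inactive, and squaring yields the prediction-loss bound $\Tr((\wh{B}-B)^T\Sigma(\wh{B}-B))=\|\Sigma^{1/2}\Delta\|_{\rm F}^2\leq C\sigma^2(r(p+m)/n\vee\epsilon^2)$ with probability at least $1-\exp(-C'(r(p+m)+n\epsilon^2))$. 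The Frobenius bound follows on dividing by $\kappa^2=\lambda_{\min}(\Sigma)$, using $\|\Sigma^{1/2}\Delta\|_{\rm F}^2\geq\kappa^2\fnorm{\Delta}^2$ (apply the definition of $\kappa$ column by column). The nuclear-norm bound follows from $\rank(\Delta)\leq 2r$ and Cauchy--Schwarz, $\nunorm{\Delta}^2\leq\rank(\Delta)\,\fnorm{\Delta}^2\leq 2r\,\fnorm{\Delta}^2$, which turns the Frobenius rate into $r^2(p+m)/n\vee r\epsilon^2$.

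The step I expect to be the main obstacle is the Gaussian small-ball bound $\Expect_X[\|\Delta^TX\|\wedge\sigma]\gtrsim\|\Sigma^{1/2}\Delta\|_{\rm F}\wedge\sigma$; the bound $\bar\Phi(u)\leq\tfrac12-c_0(u\wedge1)$ and all the chaining are routine. Establishing it amounts to ruling out the norm of the Gaussian vector $\Delta^TX$ being atypically small, which requires comparing $\Expect_X\|\Delta^TX\|$ with $(\Expect_X\|\Delta^TX\|^2)^{1/2}=\|\Sigma^{1/2}\Delta\|_{\rm F}$ uniformly over the spectrum of $\Delta^T\Sigma\Delta$ --- handling both the flat regime, where Gaussian concentration of the norm suffices, and the spiked regime, where a single coordinate dominates --- and then converting this mean lower bound into a truncated-mean lower bound via a Paley--Zygmund small-ball estimate. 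Apart from this, the argument is the proof of Theorem \ref{thm:multiple} with $pm$ replaced by $r(p+m)$, plus the one-line rank inequality above for the nuclear loss.
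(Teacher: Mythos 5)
Your proof is correct, and its skeleton is the paper's: uniform convergence of the empirical depth (Proposition \ref{prop:reduced}) to lower-bound $\D_{\mathcal{A}_{2r}}(\wh{B},P_B)$, a curvature/modulus bound at the population level exploiting $\wh{B}-B\in\mathcal{A}_{2r}$ to get the prediction loss, then $\kappa$ for the Frobenius loss and $\nunorm{\Delta}\leq\sqrt{\rank(\Delta)}\,\fnorm{\Delta}$ for the nuclear loss. The one genuinely different step is how you handle contamination: the paper splits the sample into the clean and contaminated parts, controls $n_2/n_1$ by binomial concentration (Lemma \ref{lem:ratio}), and applies the uniform convergence only to the clean subsample via the counting inequality $n_1\D(\cdot,\text{clean})\geq n\D(\cdot,\text{all})-n_2$; you instead apply Proposition \ref{prop:reduced} to the full contaminated empirical measure (legitimate, since the proposition is distribution-free) and pass to $P_B$ through $\TV(\mathbb{P}_{(\epsilon,B,Q)},P_B)\leq\epsilon$, which the infimum-of-probabilities structure of the depth indeed respects. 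This avoids Lemma \ref{lem:ratio} altogether and is slightly cleaner, at no cost in the rate (you get $2\epsilon$ where the paper gets $\epsilon/(1-\epsilon)$ plus a deviation term, which is immaterial up to constants). Your curvature step is essentially Lemma \ref{lem:curve-m} re-derived: the Paley--Zygmund/second-moment computation ($\Expect\|\Delta^TX\|^4\leq3(\Expect\|\Delta^TX\|^2)^2$, hence $\mathbb{P}(\|\Delta^TX\|^2>\tfrac14\Expect\|\Delta^TX\|^2)\geq\tfrac3{16}$) is exactly the paper's, and your truncated comparison $\bar\Phi(u)\leq\tfrac12-c_0(u\wedge1)$ plus the smallness assumption plays the role of the paper's condition $\eta<3/20$ that pins $t\leq4$; you correctly identified this small-ball estimate as the only non-routine ingredient, and your sketch of it is adequate.
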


Theorem \ref{thm:rrr} gives rates in terms of prediction loss, squared Frobenius loss and squared nuclear loss. The rates are identical to those of Theorem \ref{thm:trace} for low-rank trace regression, with $p+m$ corresponding to $p_1+p_2$ in Theorem \ref{thm:trace}. This is due to the similarity of the two problems. In both problems, the regression matrix $B$ is assumed to belong to the low-rank set $\mathcal{A}_r$. The only difference is that for trace regression, the response is univariate and the covariate is a matrix, and for reduced rank regression, the response is multivariate and the covariate is a vector.

Applying the minimax lower bounds in \cite{ma2014adaptive}, we find that the rates given by Theorem \ref{thm:rrr} are optimal when $\epsilon=0$. Though the lower bounds in \cite{ma2014adaptive} are for a fixed design setting and they did not give explicit dependence on $\kappa$, the results can be easily modified to the random design setting considered here. The dependence on $\kappa$ can be made explicit as well. We refer the readers to the discussion in \cite{raskutti2011minimax,chen2015general} for details. In addition, Theorem \ref{thm:lower} and similar calculations of modulus of continuity as in Section \ref{sec:rank} imply that the rates given by Theorem \ref{thm:rrr} are also optimal when $\epsilon>0$.

\section{Discussion}\label{sec:disc}

\subsection{Extension to General Error Distributions}

The error distributions we consider in Section \ref{sec:uni} and Section \ref{sec:multi} are all Gaussian. This assumption can be greatly relaxed. In this section, we consider error distributions that have elliptical shapes.
\begin{definition} 
A random vector $W\sim EC(0,\Gamma,F)$ is distributed according to a centered continuous elliptical distribution with a scatter matrix $\Gamma\in\mathbb{R}^{d\times d}$ and a marginal cumulative distribution function $F$ if and only if $W=\Gamma^{1/2}E$, and $F(t)=\mathbb{P}(u^TE\leq t\|u\|)$ does not depend on $u\in\mathbb{R}^d$. Moreover, there is a density function $f$, such that $f(0)=1$ and $F(t)=\int_{-\infty}^tf(s)ds$.
\end{definition}

A more general definition of elliptical distributions is referred to \cite{fang1990symmetric}. Here, we only consider those 
that have marginal densities. Without loss of generality, we impose the constraint that $f(0)=1$. Otherwise, the scatter matrix $\Gamma$ would only be defined up to a multiplicative factor. When the dimension is $1$, the definition covers all random variables with symmetric density functions centered at zero.

Consider the setting of multivariate linear regression in Section \ref{sec:multiple}. The regression model $P_B$ for $X\in\mathbb{R}^p$ and $Y\in \mathbb{R}^m$ is specified by the sampling process $X\sim N(0,\Sigma)$ and $(Y-B^TX)|X\sim EC(0,\Gamma, F)$. 
The dependence on $\Sigma,\Gamma,F$ are suppressed in the notation of $P_B$. For i.i.d. observations generated by $\mathbb{P}_{(\epsilon,B,Q)}=(1-\epsilon)P_B+\epsilon Q$, the results of Theorem \ref{thm:multiple} are extended to the following theorem.

\begin{thm}\label{thm:EC}
Consider the estimator $\wh{B}$ defined in (\ref{eq:multiple-est}). Assume that $\epsilon^2+\frac{pm}{n}$ is sufficiently small. Moreover, there are some constants $c_1$ and $c_2$ such that $\min_{|t|\leq c_1}f(t)\geq c_2$. Then, we have
\begin{eqnarray}
\label{eq:multiple-pred}\Tr((\wh{B}-B)^T\Sigma(\wh{B}-B)) &\leq& C\sigma^2\left(\frac{pm}{n}\vee\epsilon^2\right), \\
\label{eq:multiple-frob}\fnorm{\wh{B}-B}^2 &\leq& C\frac{\sigma^2}{\kappa^2}\left(\frac{pm}{n}\vee\epsilon^2\right),
\end{eqnarray}
with $\mathbb{P}_{(\epsilon,B,Q)}$-probability at least $1-\exp\left(-C'\left(pm+n\epsilon^2\right)\right)$ uniformly over all $Q$ and $B\in\mathbb{R}^{p\times m}$, where $\kappa$ is defined in (\ref{eq:multiple-kappa}), $\sigma^2=\opnorm{\Gamma}$ and  $C,C'$ are some absolute constants.
\end{thm}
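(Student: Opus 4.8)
The plan is to re-run the proof of Theorem~\ref{thm:multiple} almost verbatim, isolating the single place where the elliptical assumption enters and treating only that. The proof rests on three ingredients. First, the uniform-deviation bound of Proposition~\ref{prop:multiple}: it is proved by controlling the complexity of the half-space class $\{(x,y):\iprod{U^Tx}{y-B^Tx}\ge 0\}$ and never uses the law of the noise, so it applies here unchanged, giving $\sup_{B'}\bigl|\D_{\mathbb{R}^{p\times m}\backslash\{0\}}(B',\mathbb{P}_n)-\D_{\mathbb{R}^{p\times m}\backslash\{0\}}(B',\mathbb{P}_{(\epsilon,B,Q)})\bigr|\le\zeta$ on an event of probability $1-2\delta$, with $\zeta:=C\sqrt{pm/n}+\sqrt{\log(1/\delta)/(2n)}$. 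Second, the total-variation stability $\bigl|\D_{\U}(B',\mathbb{P}_{(\epsilon,B,Q)})-\D_{\U}(B',P_B)\bigr|\le\TV(\mathbb{P}_{(\epsilon,B,Q)},P_B)\le\epsilon$, valid for every $B'$ since $\D_{\U}$ is an infimum of probabilities of fixed events. Third, a population-level separation inequality for the elliptical model, described next; this is the only genuinely new piece.

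For the separation, I would first note $\D_{\U}(B,P_B)=\tfrac12$: conditionally on $X$, $\iprod{U^TX}{Y-B^TX}=u^TE$ with $u=\Gamma^{1/2}U^TX$ and $E$ as in the definition, and $\mathbb{P}(u^TE\ge0)=1-F(0)=\tfrac12$ because $F$ has a symmetric density; the infimum over $U$ is then $\tfrac12$. Next fix $B'=B-\Delta$ with $\Delta\neq0$ and $\tau:=\fnorm{\Sigma^{1/2}\Delta}>0$ (if $\tau=0$ the prediction bound is already trivial), and use the competitor $U=-\Delta$. Since $Y-(B')^TX=\Delta^TX+\Gamma^{1/2}E$,
\[
\iprod{-\Delta^TX}{\,\Delta^TX+\Gamma^{1/2}E\,}=-\norm{\Delta^TX}^2-(\Gamma^{1/2}\Delta^TX)^TE,
\]
so, conditionally on $X$ (the event $\Delta^TX=0$ being negligible when $\tau>0$), this is nonnegative with probability $F\!\bigl(-\norm{\Delta^TX}^2/\norm{\Gamma^{1/2}\Delta^TX}\bigr)$. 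Because $\sigma^2=\opnorm{\Gamma}$ we have $\norm{\Gamma^{1/2}\Delta^TX}\le\sigma\norm{\Delta^TX}$, and $F$ is nondecreasing, so this probability is at most $F(-\norm{\Delta^TX}/\sigma)$. The hypothesis $\min_{|t|\le c_1}f(t)\ge c_2$ gives $F(-t)=\tfrac12-\int_0^t f\le\tfrac12-c\min(t,1)$ for a constant $c=c(c_1,c_2)>0$, whence
\[
\D_{\U}(B',P_B)\le\mathbb{E}\,F\!\bigl(-\tfrac{\norm{\Delta^TX}}{\sigma}\bigr)\le\tfrac12-c\,\mathbb{E}\min\!\bigl(\tfrac{\norm{\Delta^TX}}{\sigma},1\bigr).
\]
Since $\norm{\Delta^TX}^2$ is a nonnegative quadratic form in the Gaussian $X$ with $\mathbb{E}\norm{\Delta^TX}^2=\Tr(\Delta^T\Sigma\Delta)=\tau^2$ and $\mathbb{E}\norm{\Delta^TX}^4\le 3\tau^4$, the Paley--Zygmund inequality yields $\mathbb{P}(\norm{\Delta^TX}\ge\tau/2)\ge 3/16$, so $\D_{\U}(B',P_B)\le\tfrac12-c'\min(\tau/\sigma,1)$ for a constant $c'>0$.

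With the separation in hand the endgame is mechanical. Optimality $\D_{\U}(\wh B,\mathbb{P}_n)\ge\D_{\U}(B,\mathbb{P}_n)$ combined with the first two ingredients gives
\[
\D_{\U}(\wh B,P_B)\ \ge\ \D_{\U}(\wh B,\mathbb{P}_n)-\zeta-\epsilon\ \ge\ \D_{\U}(B,\mathbb{P}_n)-\zeta-\epsilon\ \ge\ \D_{\U}(B,P_B)-2\zeta-2\epsilon\ =\ \tfrac12-2(\zeta+\epsilon),
\]
so by the separation applied at $B'=\wh B$, $c'\min\!\bigl(\fnorm{\Sigma^{1/2}(\wh B-B)}/\sigma,1\bigr)\le 2(\zeta+\epsilon)$. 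Taking $\delta=\exp(-C'(pm+n\epsilon^2))$ so that $\log(1/\delta)/n\asymp pm/n+\epsilon^2$, the standing smallness assumption forces $\zeta+\epsilon<c'/2$, so the minimum is attained at its first argument; squaring gives $\Tr((\wh B-B)^T\Sigma(\wh B-B))=\fnorm{\Sigma^{1/2}(\wh B-B)}^2\lesssim\sigma^2(pm/n\vee\epsilon^2)$, the first bound of the theorem. The Frobenius bound follows column-by-column from $\norm{\Sigma^{1/2}v}\ge\kappa\norm{v}$, i.e. $\fnorm{\Sigma^{1/2}(\wh B-B)}^2\ge\kappa^2\fnorm{\wh B-B}^2$. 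All inequalities hold on the uniform-deviation event, of probability $1-2\delta=1-\exp(-C'(pm+n\epsilon^2))$ after adjusting $C'$.

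The hard part — really the only part not immediate from the Gaussian proof — is the population separation: conditionally on $X$, the effective noise along the competitor direction $U=-\Delta$ is the one-dimensional elliptical variable $(\Gamma^{1/2}\Delta^TX)^TE$, whose scale is $\norm{\Gamma^{1/2}\Delta^TX}$ rather than $\norm{\Delta^TX}$. The reduction to the single scalar $\sigma=\sqrt{\opnorm{\Gamma}}$ via $\norm{\Gamma^{1/2}\Delta^TX}\le\sigma\norm{\Delta^TX}$ together with monotonicity of $F$ is what keeps the bound dimension-free, and the local density lower bound $\min_{|t|\le c_1}f\ge c_2$ is exactly the ingredient that converts a depth deficit of order $\eta$ into a parameter deficit of the same order for small $\eta$. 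Everything else is the bookkeeping already carried out in the Gaussian case.
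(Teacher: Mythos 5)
Your proposal is correct, and its core -- the elliptical curvature step -- is essentially the paper's own argument: you reduce to the scalar scale $\sigma=\sqrt{\opnorm{\Gamma}}$ via $\|\Gamma^{1/2}\Delta^TX\|\leq\sigma\|\Delta^TX\|$ and monotonicity of $F$, and then convert the local density lower bound $\min_{|t|\leq c_1}f(t)\geq c_2$ into a linear depth deficit using the same moment/Paley--Zygmund control of $Y=\|\Delta^TX\|^2/\Tr(\Delta^T\Sigma\Delta)$ that appears in the proof of Lemma \ref{lem:curve-m}; this is exactly the paper's elliptical extension of that lemma, only phrased through the competitor $U=-\Delta$ and the bound $F(-t)\leq\tfrac12-c\min(t,1)$ rather than through the function $g(t)=\mathbb{E}F(t\sqrt{Y})$. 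Where you genuinely depart from the paper is the contamination bookkeeping: the paper (in the proofs of Theorems \ref{thm:nonpar}--\ref{thm:multiple}, to which the proof of Theorem \ref{thm:EC} defers) splits the sample into its clean and contaminated parts, applies Proposition \ref{prop:multiple} to the clean subsample, and controls $n_2/n_1$ by the binomial concentration of Lemma \ref{lem:ratio}, arriving at a deficit $\tfrac{\epsilon}{1-\epsilon}+O\bigl(\sqrt{(pm+\log(1/\delta))/n}\bigr)$; you instead apply Proposition \ref{prop:multiple} to the mixture $(1-\epsilon)P_B+\epsilon Q$ itself and use the deterministic total-variation stability $|\D_{\U}(B',(1-\epsilon)P_B+\epsilon Q)-\D_{\U}(B',P_B)|\leq\epsilon$, valid because the depth is an infimum of probabilities of fixed measurable events. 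Your route avoids Lemma \ref{lem:ratio} and the conditioning argument entirely, yields $\epsilon$ in place of $\epsilon/(1-\epsilon)$, and produces the same rates and the same probability statement, so for this fixed-class (non-sparse) setting it is the more economical bookkeeping; the paper's sample-splitting device is simply the one it reuses uniformly across all its theorems. Your handling of the degenerate cases (only $\D_{\U}(B,P_B)\geq\tfrac12$ is needed, and $\Gamma^{1/2}\Delta^TX=0$ with $\Delta^TX\neq0$ gives conditional probability zero, which is still dominated by $F(-\|\Delta^TX\|/\sigma)$) is also sound.
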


In addition to Theorem \ref{thm:multiple}, all the other results (except those of Gaussian graphical model) in Section \ref{sec:uni} and Section \ref{sec:multi} can be extended to the setting of general elliptical error distributions. The results are the same and thus the details are omitted. Theorem \ref{thm:EC} implies that the regression depth maximizer is not only robust to contamination, but is also robust to general error distributions.

Besides the error distribution, the Gaussian assumption for the covariates can also be extended similarly. However, this requires significantly more technical details and there are more than one ways to do it. We therefore do not explore all the possibilities here.

\subsection{A General Notion of Depth for Linear Operators}

Consider a general covariate space $\mathcal{X}$ and a general response space $\mathcal{Y}$. We assume the response space $\mathcal{Y}$ is a Hilbert space equipped with an inner product $\iprod{\cdot}{\cdot}$. Let $\ell(\mathcal{X},\mathcal{Y})$ be a class of linear operators $f:\mathcal{X}\rightarrow \mathcal{Y}$. The inner product structure on the response space allows us to define a general depth function for a linear operator $f\in\ell(\mathcal{X},\mathcal{Y})$. Given a probability distribution $(X,Y)\sim\mathbb{P}$ on $\mathcal{X}\times\mathcal{Y}$, the depth of an $f\in\ell(\mathcal{X},\mathcal{Y})$ is defined as
$$\D_{\mathcal{G}}(f,\mathbb{P})=\inf_{g\in\mathcal{G}}\mathbb{P}\left\{\iprod{g(X)}{Y-f(X)}\geq 0\right\},$$
where $\mathcal{G}$ is a subset of $\ell(\mathcal{X},\mathcal{Y})$.

This general definition not only covers the multivariate regression depth studied in this paper, but also allows the covariate to be a function. Some special cases are listed in the following table.
\begin{center}
  \begin{tabular}{ l | c | r }
    \hline
     & $\mathcal{X}$ & $\mathcal{Y}$ \\ \hline
    Tukey's depth \citep{tukey1975mathematics} & $\{1\}$ & $\mathbb{R}^m$ \\ \hline
    regression depth \citep{rousseeuw1999regression} & $\mathbb{R}^p$ & $\mathbb{R}$ \\ \hline
   multivariate regression depth \citep{bern2000multivariate,mizera2002depth} & $\mathbb{R}^p$ & $\mathbb{R}^m$ \\ \hline
   depth for functional linear regression & $\mathcal{C}[0,1]$ & $\mathbb{R}$ \\ \hline
   depth for multivariate functional linear regression & $\mathcal{C}[0,1]$ & $\mathbb{R}^m$ \\ \hline
  \end{tabular}
\end{center}
When $\mathcal{X}\times\mathcal{Y}$ takes $\{1\}\times \mathbb{R}^m$, $\mathbb{R}^p\times\mathbb{R}$ and $\mathbb{R}^p\times\mathbb{R}^m$, respectively, we recover Tukey's depth, regression depth and multivariate regression depth. Moreover, when $\mathcal{X}$ takes the class of all continuous functions on the unit interval $\mathcal{C}[0,1]$, the depth function can be used for robust functional linear regression. This application will be considered in future projects.

\section{Proofs}\label{sec:pf}

This section collects the proofs of the results presented in the paper. Section \ref{sec:pf-uniform} proves uniform convergence of all the empirical depth functions used in the paper. This includes the proofs of Propositions \ref{prop:nonpar}, \ref{prop:sparse}, \ref{prop:rank}, \ref{prop:multiple}, \ref{prop:group} and \ref{prop:reduced}. Section \ref{sec:pf-curve} establishes the curvature of the population depth functions. Finally, in Section \ref{sec:pf-main}, we prove all the theorems in the paper.

\subsection{Uniform Convergence of the Empirical Depth Functions}\label{sec:pf-uniform}

To establish uniform convergence of the empirical depth functions, it is essential to bound $\sup_{A\in\mathcal{A}}|\mathbb{P}_n(A)-\mathbb{P}(A)|$ over a collection $\mathcal{A}$. The first step is to use McDiarmid's bounded difference inequality. The following version can be found in Chapter 3.1 of \cite{devroye2012combinatorial}.
\begin{lemma}\label{lem:bdd}
For any probability measure $\mathbb{P}$ and its associated empirical measure $\mathbb{P}_n$, we have for any $t>0$,
$$\sup_{A\in\mathcal{A}}|\mathbb{P}_n(A)-\mathbb{P}(A)|\leq \mathbb{E}\left\{\sup_{A\in\mathcal{A}}|\mathbb{P}_n(A)-\mathbb{P}(A)|\right\}+t,$$
with probability at least $1-2e^{-2nt^2}$.
\end{lemma}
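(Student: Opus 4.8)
The plan is to recognize the left-hand side as a function of the i.i.d.\ sample that has very small bounded differences, and then invoke the classical McDiarmid bounded-difference inequality. Write $Z_1,\dots,Z_n$ for the underlying observations, so that $\mathbb{P}_n=\frac1n\sum_{i=1}^n\delta_{Z_i}$, and define
$$\Phi(z_1,\dots,z_n)=\sup_{A\in\mathcal{A}}\left|\frac1n\sum_{i=1}^n\mathbb{I}\{z_i\in A\}-\mathbb{P}(A)\right|,$$
which is exactly $\sup_{A\in\mathcal{A}}|\mathbb{P}_n(A)-\mathbb{P}(A)|$ evaluated at the sample. The whole argument reduces to understanding how $\Phi$ reacts to changing one coordinate.

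First I would verify the bounded-difference condition. Fix a coordinate $i$ and replace $z_i$ by $z_i'$ while keeping the other coordinates fixed. For each fixed $A\in\mathcal{A}$, the empirical mass $\frac1n\sum_j\mathbb{I}\{z_j\in A\}$ changes by $\frac1n\left(\mathbb{I}\{z_i'\in A\}-\mathbb{I}\{z_i\in A\}\right)$, whose absolute value is at most $1/n$; hence the quantity inside the supremum changes by at most $1/n$, uniformly over $A$. Using the elementary inequality $\bigl|\sup_A a_A-\sup_A b_A\bigr|\le\sup_A|a_A-b_A|$ one then gets $|\Phi(\dots,z_i,\dots)-\Phi(\dots,z_i',\dots)|\le 1/n$. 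So $\Phi$ has the bounded-difference property with constants $c_i=1/n$ for every $i$.

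Next I would apply McDiarmid's inequality to $\Phi(Z_1,\dots,Z_n)$. Since $\sum_{i=1}^n c_i^2=n\cdot(1/n)^2=1/n$, it yields, for every $t>0$, both $\mathbb{P}\{\Phi-\mathbb{E}\Phi\ge t\}\le e^{-2nt^2}$ and $\mathbb{P}\{\Phi-\mathbb{E}\Phi\le -t\}\le e^{-2nt^2}$. A union bound over these two events gives $|\Phi-\mathbb{E}\Phi|\le t$, and in particular $\Phi\le\mathbb{E}\Phi+t$, with probability at least $1-2e^{-2nt^2}$; recalling $\mathbb{E}\Phi=\mathbb{E}\{\sup_{A\in\mathcal{A}}|\mathbb{P}_n(A)-\mathbb{P}(A)|\}$ finishes the proof.

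There is essentially no hard step here; the argument is a one-line application once the bounded-difference constants are identified. The only points deserving a word of care are (i) measurability of the supremum over a possibly uncountable class $\mathcal{A}$, which is handled either by restricting to a countable determining subclass or by reading the probabilities through outer measure, and (ii) the passage from "supremum of the per-set differences is at most $1/n$" to "the difference of the two suprema is at most $1/n$", which is exactly the elementary inequality invoked above.
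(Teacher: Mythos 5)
Your proof is correct and is exactly the standard bounded-difference argument: the paper does not prove this lemma itself but cites it from Devroye and Lugosi, where the proof is the same application of McDiarmid's inequality with constants $c_i=1/n$. (In fact the one-sided deviation bound alone already gives the stated inequality with probability $1-e^{-2nt^2}$; the factor $2$ merely reflects the two-sided version you derived.)
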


By Lemma \ref{lem:bdd}, it is sufficient to bound the expectation $\mathbb{E}\left\{\sup_{A\in\mathcal{A}}|\mathbb{P}_n(A)-\mathbb{P}(A)|\right\}$. This quantity can be controlled by the VC dimension of $\mathcal{A}$. The following lemma can be bound in Chapter 4.3 of \cite{devroye2012combinatorial}.

\begin{lemma}\label{lem:VC}
For any class $\mathcal{A}$ with VC dimension $V$,
$$\mathbb{E}\left\{\sup_{A\in\mathcal{A}}|\mathbb{P}_n(A)-\mathbb{P}(A)|\right\}\leq C\sqrt{\frac{V}{n}},$$
where $C>0$ is a universal constant.
\end{lemma}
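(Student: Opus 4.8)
The plan is to prove this classical empirical-process bound by symmetrization followed by chaining, which is the route behind Chapter~4.3 of \cite{devroye2012combinatorial}. One may assume $n\geq V$, since otherwise $\sqrt{V/n}\geq 1$ while the left-hand side is always at most $1$, so the inequality is trivial.

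\textbf{Symmetrization.} First I would introduce an independent ghost sample $X_1',\dots,X_n'$ and i.i.d.\ Rademacher signs $\sigma_1,\dots,\sigma_n$ and apply the standard symmetrization inequality to obtain
$$\mathbb{E}\left\{\sup_{A\in\mathcal{A}}|\mathbb{P}_n(A)-\mathbb{P}(A)|\right\}\leq 2\,\mathbb{E}\left\{\sup_{A\in\mathcal{A}}\left|\frac1n\sum_{i=1}^n\sigma_i\mathbb{I}\{X_i\in A\}\right|\right\}.$$
The purpose is that, conditionally on $X_1,\dots,X_n$, the inner supremum is that of a Rademacher—hence sub-Gaussian—process indexed by $\mathcal{A}$ under the random semimetric $\rho(A,A')^2=\mathbb{P}_n(A\triangle A')$, which has diameter at most $1$.

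\textbf{Chaining.} Next I would apply Dudley's entropy bound conditionally on the data,
$$\mathbb{E}_{\sigma}\left\{\sup_{A\in\mathcal{A}}\left|\frac1n\sum_{i=1}^n\sigma_i\mathbb{I}\{X_i\in A\}\right|\right\}\leq\frac{C}{\sqrt n}\int_0^1\sqrt{\log N(\epsilon,\mathcal{A},L_2(\mathbb{P}_n))}\,d\epsilon,$$
where $N(\epsilon,\mathcal{A},L_2(Q))$ is the $\epsilon$-covering number of $\mathcal{A}$, viewed as a family of indicator functions, in $L_2(Q)$. Invoking the VC covering-number bound $\log N(\epsilon,\mathcal{A},L_2(Q))\leq C\,V\log(1/\epsilon)$, valid for every probability measure $Q$ and $\epsilon\in(0,1)$, makes the entropy integral at most $C'\sqrt V$ uniformly in $\mathbb{P}_n$. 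Taking expectations over the data and combining with the symmetrization step then gives $\mathbb{E}\{\sup_{A\in\mathcal{A}}|\mathbb{P}_n(A)-\mathbb{P}(A)|\}\leq C''\sqrt{V/n}$.

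\textbf{Main obstacle.} The substantive step is the VC covering-number bound, which is the only place the dimension $V$ enters. The plan there is to bound the shatter coefficient $S(\mathcal{A},m)$ by $(em/V)^V$ via the Sauer--Shelah lemma, and then to convert this combinatorial estimate into the metric covering bound by a probabilistic extraction argument: take a maximal $\epsilon$-separated subfamily, pass to a random subsample whose size $m$ is of order $\epsilon^{-2}$ times the log-cardinality of that subfamily, and argue that the separation survives with positive probability, so that the subfamily embeds into $\{0,1\}^m$ and hence has cardinality at most $S(\mathcal{A},m)$; optimizing over $m$ yields the polynomial-in-$1/\epsilon$ bound. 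The chaining in the previous step is exactly what removes the spurious $\sqrt{\log(n/V)}$ factor that a single union bound over one shattered configuration would leave behind; an alternative that avoids explicit chaining is a peeling/induction-on-scales argument, but symmetrization plus the Dudley integral plus the VC covering bound is the cleanest self-contained route.
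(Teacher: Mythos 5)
Your proposal is correct, and it is essentially the argument behind the paper's treatment of this lemma: the paper gives no proof at all, simply citing Chapter~4.3 of \cite{devroye2012combinatorial}, and the route you lay out (symmetrization, then Dudley's entropy integral under $L_2(\mathbb{P}_n)$, then the Sauer--Shelah/Haussler covering-number bound $\log N(\epsilon,\mathcal{A},L_2(Q))\lesssim V\log(1/\epsilon)$) is exactly the standard chaining proof that reference contains. Your remark that chaining is what removes the $\sqrt{\log n}$ factor a plain union bound would leave is the right key point; no gaps.
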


Lemma \ref{lem:VC} suggests that we need to give an upper bound for the VC dimension of the class $\mathcal{A}$. For the depth functions considered in the paper, the relevant class is
\begin{equation}
\mathcal{A}=\left\{\{Z\in\mathbb{R}^{d_1\times d_2}:\Tr(WZ^T)\geq 0\}: W\in\mathbb{R}^{d_1\times d_2},\text{rank}(W)\leq r\right\}. \label{eq:low-rank-A}
\end{equation}
Intuitively, the matrix $W$ in $\mathcal{A}$ defined above has at most $r(d_1+d_2)$ degrees of freedom, which suggests a VC dimension bound $r(d_1+d_2)$. It was shown by \cite{wolf2007modeling} that the VC dimension of $\mathcal{A}$ is bounded by $r(d_1+d_2)\log(r(d_1+d_2))$. Using a slightly modified proof, we obtain a bound with the rate $r(d_1+d_2)$ at the cost of a larger constant.

\begin{lemma}\label{lem:VCrank}
The VC dimension of (\ref{eq:low-rank-A}) is bounded by $8r(d_1+d_2)$.
\end{lemma}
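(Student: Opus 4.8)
The plan is to bound the VC dimension of the class $\mathcal{A}$ in (\ref{eq:low-rank-A}) by expressing each set $\{Z:\Tr(WZ^T)\geq 0\}$ with $\rank(W)\leq r$ as a Boolean combination of a bounded number of halfspaces in a lower-dimensional linear space, and then invoking standard combinatorial bounds on the VC dimension of such combinations. The key observation is the factorization: if $\rank(W)\leq r$ then $W=UV^T$ with $U\in\mathbb{R}^{d_1\times r}$ and $V\in\mathbb{R}^{d_2\times r}$, so that $\Tr(WZ^T)=\Tr(UV^TZ^T)=\Tr(V^TZ^TU)=\sum_{\ell=1}^r v_\ell^TZ^Tu_\ell$, where $u_\ell,v_\ell$ are the columns of $U,V$. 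Thus the indicator $\mathbb{I}\{\Tr(WZ^T)\geq 0\}$ is a function of the $r$ bilinear forms $\{u_\ell^TZv_\ell\}_{\ell=1}^r$ together with the $r(d_1+d_2)$ real parameters in $U,V$.

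First I would reduce to a polynomial-threshold / sign-pattern counting argument. Fix a finite set of points $Z_1,\dots,Z_N\in\mathbb{R}^{d_1\times d_2}$. For the class to shatter this set, the map $(U,V)\mapsto\big(\mathrm{sign}\,\Tr(UV^TZ_i^T)\big)_{i=1}^N$ must realize all $2^N$ sign patterns. Each coordinate is the sign of a polynomial in the $r(d_1+d_2)$ entries of $(U,V)$ of degree $2$ (it is bilinear in $U$ and $V$, hence quadratic overall). By the Warren/Milnor-type bound — or more directly by the standard lemma that $N$ polynomials of degree $\leq D$ in $k$ real variables realize at most $(O(ND/k))^k$ distinct sign patterns (this is exactly the tool \cite{wolf2007modeling} uses, and what underlies the Goldberg–Jerrum bound) — the number of realizable sign patterns is at most $\left(\frac{c N D}{k}\right)^{k}$ for $N\geq k$, with $D=2$ and $k=r(d_1+d_2)$. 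For shattering we need this to be at least $2^N$, giving $2^N\leq (c'N/k)^k$, i.e. $N\leq k\log_2(c'N/k)$. A routine manipulation of this inequality (using $N\geq k$, and that $x\mapsto x/\log_2 x$ is increasing) shows $N\leq C k$ for a suitable absolute constant; tracking the constants carefully — choosing them so that the bound closes at $N=8r(d_1+d_2)$ — yields $\mathrm{VC}(\mathcal{A})\leq 8r(d_1+d_2)$.

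The main obstacle, and the only place where real care is needed, is pinning down the constant to be exactly $8$ rather than merely $O(1)$. This means I cannot be cavalier with the $O(\cdot)$'s in the sign-pattern bound: I need the explicit form, e.g. that $N$ polynomials of degree $D$ in $k$ variables have at most $2\left(\frac{2eND}{k}\right)^{k}$ nonempty sign cells (for $N\geq k$), plug in $D=2$, set $N=8k$, and verify $2(4e\cdot 8)^k < 2^{8k}$, which holds since $4e\cdot 8\approx 87<256$. Two secondary points to handle cleanly: (i) the degenerate cases $Z=0$ and the boundary $\Tr(WZ^T)=0$ do not affect the argument since we only count sign patterns; (ii) the parametrization $W=UV^T$ is redundant but that only helps — it can only decrease the number of realizable sign patterns relative to treating all rank-$\leq r$ matrices, and in any case an upper bound on patterns realizable by the (surjective) parametrization is an upper bound for $\mathcal{A}$. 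I would present it essentially as a one-paragraph adaptation of \cite{wolf2007modeling}, emphasizing that replacing their union bound over ranks by the direct $(U,V)$-parametrization removes the extra $\log$ factor, at the cost of the constant $8$.
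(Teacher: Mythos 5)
Your proposal is correct and follows essentially the same route as the paper: factor the rank-$r$ matrix as $W=UV^T$ so that $\Tr(WZ^T)$ becomes a degree-$2$ polynomial in the $r(d_1+d_2)$ entries of $(U,V)$, then apply the Warren-type sign-pattern bound from \cite{warren1968lower,wolf2007modeling} and verify numerically that the choice $x=8r(d_1+d_2)$ closes the inequality. The only difference is the exact form of the counting bound you quote, which changes nothing substantive.
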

\begin{proof}
For a matrix with rank at most $r$, it has decomposition $W=\sum_{l=1}^ru_lv_l^T$. Thus, $\Tr(WZ^T)=\sum_{l=1}^ru_l^TZv_l=\sum_{l=1}^r\sum_{i=1}^{d_1}\sum_{j=1}^{d_2}Z_{ij}u_{li}v_{lj}$ is a polynomial of degree $2$ in $r(d_1+d_2)$ variables. According to \cite{warren1968lower,wolf2007modeling}, if there is some $x\geq r(d_1+d_2)$, such that
\begin{equation}
\left(\frac{8ex}{r(d_1+d_2)}\right)^{r(d_1+d_2)} \leq 2^x\label{eq:warren}
\end{equation}
holds, then the VC dimension is bounded by $x$. It is easy to check that $x=8r(d_1+d_2)$ satisfies (\ref{eq:warren}), and thus is an upper bound for the VC dimension.
\end{proof}

Now we are ready to give proofs for all the uniform convergence results of the empirical depth functions.

\begin{proof}[Proof of Proposition \ref{prop:nonpar}]
For a general multi-task regression depth function, we have
\begin{eqnarray*}
&& \sup_{B\in\mathcal{B}}|\D_{\U}(B,\mathbb{P})-\D_{\U}(B,\mathbb{P}_n)| \\
&\leq& \sup_{B\in\mathcal{B}}\sup_{U\in\U}\left|\mathbb{P}_n\left\{\iprod{U^TX}{Y-B^TX}\geq 0\right\}-\mathbb{P}\left\{\iprod{U^TX}{Y-B^TX}\geq 0\right\}\right|.
\end{eqnarray*}
Since
\begin{eqnarray*}
&& \iprod{U^TX}{Y-B^TX} \\
&=& \Tr(UYX^T)-\Tr(UB^TXX^T) \\
&=& \Tr(WZ^T),
\end{eqnarray*}
where
\begin{equation}
W=W(U,B)=\begin{pmatrix}
U & 0 \\
0 & UB^T
\end{pmatrix}\quad\text{and}\quad Z^T=\begin{pmatrix}
YX^T & 0 \\
0 & -XX^T
\end{pmatrix},\label{eq:matrix-W}
\end{equation}
we have
\begin{equation}
\sup_{B\in\mathcal{B}}|\D_{\U}(B,\mathbb{P})-\D_{\U}(B,\mathbb{P}_n)|\leq \sup_{A\in\mathcal{A}}|\mathbb{P}_n(A)-\mathbb{P}(A)|.\label{eq:uniform}
\end{equation}
We use $\mathbb{P}$ to denote the distribution of $Z$ with slight abuse of notation. The set $\mathcal{A}$ is defined as
\begin{equation}
\mathcal{A}=\left\{\{Z\in\mathbb{R}^{2p\times (p+m)}:\Tr(WZ^T)\geq 0\}: W=W(U,B),U\in\U,B\in\mathcal{B}\right\}.\label{eq:A}
\end{equation}
In the setting of Proposition \ref{prop:nonpar},
\begin{equation}
W=\begin{pmatrix}
u & 0 \\
0 & u\beta^T
\end{pmatrix},\label{eq:vector-W}
\end{equation}
for any $u\in\mathbb{R}^{k}$ and $\beta\in\mathbb{R}^k$. Hence, $W$ is of rank at most $1$. By Lemma \ref{lem:bdd}, Lemma \ref{lem:VC} and Lemma \ref{lem:VCrank}, we obtain the desired result.
\end{proof}

\begin{proof}[Proof of Proposition \ref{prop:sparse}]
The same argument that leads to (\ref{eq:uniform}) gives the bound
$$\sup_{\beta\in\Theta_s}\left|\D_{\Theta_{2s}}(\beta,\mathbb{P}_n)-\D_{\Theta_{2s}}(\beta,\mathbb{P})\right|\leq \max_{S_1\in\{S\subset[p]:|S|=s\},S_2\in\{S\subset[p]:|S|=2s\}}\sup_{A\in\mathcal{A}_{S_1,S_2}}|\mathbb{P}_n(A)-\mathbb{P}(A)|,$$
where
$$\mathcal{A}_{S_1,S_2}=\left\{\{Z\in\mathbb{R}^{2p\times (p+1)}:\Tr(WZ^T)\geq 0\}: W=W(u,\beta),u\in\Theta_{S_2},\beta\in\Theta_{S_1}\right\},$$
and $W(u,\beta)$ is in the form of (\ref{eq:vector-W}).
For any subset $S\subset[p]$, $\Theta_S$ is defined as
$$\Theta_S=\left\{u\in\mathbb{R}^p: u_j=0\text{ for all }j\in S^c\right\}.$$
By Lemma \ref{lem:bdd} and a union bound argument, we have
\begin{eqnarray*}
&& \sup_{\beta\in\Theta_s}\left|\D_{\Theta_{2s}}(\beta,\mathbb{P}_n)-\D_{\Theta_{2s}}(\beta,\mathbb{P})\right| \\
&& \leq \max_{S_1\in\{S\subset[p]:|S|=s\},S_2\in\{S\subset[p]:|S|=2s\}}\mathbb{E}\left\{\sup_{A\in\mathcal{A}_{S_1,S_2}}|\mathbb{P}_n(A)-\mathbb{P}(A)|\right\}+t,
\end{eqnarray*}
with probability at least $1-2e^{-2nt^2+4s\log\left(\frac{ep}{s}\right)}$. Finally, in view of Lemma \ref{lem:VC}, it is sufficient to upper bound the VC dimension of $\mathcal{A}_{S_1,S_2}$. Note that $\mathcal{A}_{S_1,S_2}$ contains matrices of the form (\ref{eq:vector-W}) with $u\in\Theta_{S_2}$ and $\beta\in\Theta_{S_1}$, the VC dimension is bounded by $8(5s+1)$ according to Lemma \ref{lem:VCrank}. Hence, we have
$$\sup_{\beta\in\Theta_s}\left|\D_{\Theta_{2s}}(\beta,\mathbb{P}_n)-\D_{\Theta_{2s}}(\beta,\mathbb{P})\right|\leq C\sqrt{\frac{s}{n}}+t,$$
with probability at least $1-2e^{-2nt^2+4s\log\left(\frac{ep}{s}\right)}$ for some universal constant $C>0$. The desired result follows by setting $t^2=\frac{4s\log\left(\frac{ep}{s}\right)+\log(1/\delta)}{2n}$.
\end{proof}

\begin{proof}[Proof of Proposition \ref{prop:rank}]
For the trace regression depth function, we have
\begin{eqnarray*}
&& \sup_{B\in\mathcal{A}_r}\left|\D_{\mathcal{A}_{2r}}(B,\mathbb{P}_n)-\D_{\mathcal{A}_{2r}}(B,\mathbb{P})\right| \\
&\leq& \sup_{B\in\mathcal{A}_r}\sup_{U\in\mathcal{A}_{2r}}|\mathbb{P}_n\left\{\iprod{U}{X}(y-\iprod{B}{X})\geq 0\right\}-\mathbb{P}\left\{\iprod{U}{X}(y-\iprod{B}{X})\geq 0\right\}|.
\end{eqnarray*}
Since
\begin{eqnarray*}
&& \iprod{U}{X}(y-\iprod{B}{X}) \\
&=& U^TXy-\Tr(BU^TXX^T) \\
&=& \Tr(WZ^T),
\end{eqnarray*}
where
$$W=W(U,B)=\begin{pmatrix}
U^T & 0 \\
0 & BU^T
\end{pmatrix}\quad\text{and}\quad Z^T=\begin{pmatrix}
Xy & 0 \\
0 & -XX^T
\end{pmatrix},$$
and
we thus have
$$\sup_{B\in\mathcal{A}_r}\left|\D_{\mathcal{A}_{2r}}(B,\mathbb{P}_n)-\D_{\mathcal{A}_{2r}}(B,\mathbb{P})\right|\leq \sup_{A\in\mathcal{A}}|\mathbb{P}_n(A)-\mathbb{P}(A)|.$$
We use $\mathbb{P}$ to denote the distribution of $Z$ with slight abuse of notation. The set $\mathcal{A}$ is defined as
$$\mathcal{A}=\left\{\{Z\in\mathbb{R}^{(p_1+p_2)\times 2p_1}:\Tr(WZ^T)\geq 0\}: W=W(U,B),U\in\U,B\in\mathcal{B}\right\}.$$
By Lemma \ref{lem:VCrank}, the VC dimension of $\mathcal{A}$ is bounded by $16r(3p_1+p_2)$. Together with Lemma \ref{lem:bdd} and Lemma \ref{lem:VC}, we obtain the desired result.
\end{proof}

\begin{proof}[Proof of Proposition \ref{prop:multiple}]
Using the argument that leads to (\ref{eq:uniform}), we have
$$\sup_{B\in\mathbb{R}^{p\times m}}|\D_{\mathbb{R}^{p\times m}\backslash\{0\}}(B,\mathbb{P})-\D_{\mathbb{R}^{p\times m}\backslash\{0\}}(B,\mathbb{P}_n)|\leq \sup_{A\in\mathcal{A}}|\mathbb{P}_n(A)-\mathbb{P}(A)|,$$
where $\mathcal{A}$ is defined in (\ref{eq:A}), which involves matrices $W$ of dimension $2p\times (p+m)$ with rank at most $p\wedge m$. According to Lemma \ref{lem:VCrank}, its VC dimension is bounded by $8(p\wedge m)(3p+m)\leq 32pm$. Together with Lemma \ref{lem:bdd} and Lemma \ref{lem:VC}, we obtain the desired result.
\end{proof}

\begin{proof}[Proof of Proposition \ref{prop:group}]
The same argument that leads to (\ref{eq:uniform}) gives the bound
$$\sup_{B\in\Xi_s}\left|\D_{\Xi_{2s}}(B,\mathbb{P}_n)-\D_{\Xi_{2s}}(B,\mathbb{P})\right|\leq \max_{S_1\in\{S\subset[p]:|S|=s\},S_2\in\{S\subset[p]:|S|=2s\}}\sup_{A\in\mathcal{A}_{S_1,S_2}}|\mathbb{P}_n(A)-\mathbb{P}(A)|,$$
where
$$\mathcal{A}_{S_1,S_2}=\left\{\{Z\in\mathbb{R}^{2p\times (p+m)}:\Tr(WZ^T)\geq 0\}: W=W(U,B),U\in\Xi_{S_2},B\in\Xi_{S_1}\right\},$$
and $W(U,B)$ is defined in (\ref{eq:matrix-W}).
For any subset $S\subset[p]$, $\Xi_S$ is defined as
$$\Xi_S=\left\{U\in\mathbb{R}^{p\times m}: U_{j*}=0\text{ for all }j\in S^c\right\}.$$
By Lemma \ref{lem:bdd} and a union bound argument, we have
\begin{eqnarray*}
&& \sup_{B\in\Xi_s}\left|\D_{\Xi_{2s}}(B,\mathbb{P}_n)-\D_{\Xi_{2s}}(B,\mathbb{P})\right| \\
&& \leq \max_{S_1\in\{S\subset[p]:|S|=s\},S_2\in\{S\subset[p]:|S|=2s\}}\mathbb{E}\left\{\sup_{A\in\mathcal{A}_{S_1,S_2}}|\mathbb{P}_n(A)-\mathbb{P}(A)|\right\}+t,
\end{eqnarray*}
with probability at least $1-2e^{-2nt^2+4s\log\left(\frac{ep}{s}\right)}$. Finally, in view of Lemma \ref{lem:VC}, it is sufficient to upper bound the VC dimension of $\mathcal{A}_{S_1,S_2}$. Note that $\mathcal{A}_{S_1,S_2}$ contains matrices of the form (\ref{eq:matrix-W}) with $U\in\Xi_{S_2}$ and $B\in\Xi_{S_1}$, the VC dimension is bounded by $8(2s\wedge m)(5s+m)\leq 64ms$ according to Lemma \ref{lem:VCrank}. Hence, we have
$$ \sup_{B\in\Xi_s}\left|\D_{\Xi_{2s}}(B,\mathbb{P}_n)-\D_{\Xi_{2s}}(B,\mathbb{P})\right|\leq C\sqrt{\frac{ms}{n}}+t,$$
with probability at least $1-2e^{-2nt^2+4s\log\left(\frac{ep}{s}\right)}$ for some universal constant $C>0$. The desired result follows by setting $t^2=\frac{4s\log\left(\frac{ep}{s}\right)+\log(1/\delta)}{2n}$.
\end{proof}

\begin{proof}[Proof of Proposition \ref{prop:reduced}]
Using the argument that leads to (\ref{eq:uniform}), we have
$$\sup_{B\in\mathcal{A}_r}\left|\D_{\mathcal{A}_{2r}}(B,\mathbb{P}_n)-\D_{\mathcal{A}_{2r}}(B,\mathbb{P})\right|\leq \sup_{A\in\mathcal{A}}|\mathbb{P}_n(A)-\mathbb{P}(A)|,$$
where $\mathcal{A}$ is defined in (\ref{eq:A}), which involves matrices $W$ of dimension $2p\times (p+m)$ with rank at most $2r$. According to Lemma \ref{lem:VCrank}, its VC dimension is bounded by $16r(3p+m)$. Together with Lemma \ref{lem:bdd} and Lemma \ref{lem:VC}, we obtain the desired result.
\end{proof}

\subsection{Curvature of the Populational Depth Functions}\label{sec:pf-curve}

In addition to the uniform convergence results, another key ingredient we need is the curvature of the population depth function. They are characterized for both univariate regression and multivariate regression by the following two lemmas, respectively.

\begin{lemma}\label{lem:curve}
Let $P_{\beta}$ denote the joint distribution of $(X,y)\in\mathbb{R}^p\times\mathbb{R}$ specified by $X\sim N(0,\Sigma)$ and $y|X\sim N(\beta^TX,\sigma^2)$. For any $\alpha\in\mathbb{R}^p$ such that $\alpha-\beta\in\U$, as long as $\D_{\U}(\alpha,P_{\beta})\geq\frac{1}{2}-\eta$ for some $\eta<\frac{5}{12}$, we have
$$\|\Sigma^{1/2}(\alpha-\beta)\|\leq C\sigma\eta,$$
where $C>0$ is  some universal constant.
\end{lemma}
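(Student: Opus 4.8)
The hypothesis $\alpha-\beta\in\U$ is there precisely so that we may probe the infimum defining $\D_{\U}(\alpha,P_\beta)$ at the single direction $u=\delta:=\alpha-\beta$. This collapses the problem to an explicit Gaussian orthant probability. First I would dispose of the trivial case $\delta=0$ (then the bound is vacuous), and otherwise write $y=X^T\beta+\sigma z$ with $z\sim N(0,1)$ independent of $X$, so that
$$u^TX(y-X^T\alpha)\big|_{u=\delta}=(\delta^TX)\big(\sigma z-\delta^TX\big).$$
Setting $t:=\|\Sigma^{1/2}\delta\|>0$, the pair $(\delta^TX,\ \sigma z-\delta^TX)$ is centered bivariate Gaussian with variances $t^2$ and $\sigma^2+t^2$ and covariance $-t^2$, hence correlation $r=-t/\sqrt{\sigma^2+t^2}$.

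Next I would observe that $\{(\delta^TX)(\sigma z-\delta^TX)\ge 0\}$ is the event that the two coordinates share the same sign. Using the symmetry $(A,B)\leftrightarrow(-A,-B)$ together with the classical orthant identity $\mathbb{P}\{A>0,B>0\}=\tfrac14+\tfrac1{2\pi}\arcsin r$ for a standard bivariate normal with correlation $r$ (the identity depends only on $r$, so the unequal variances are removed by normalization), I get
$$\mathbb{P}_\beta\big\{\delta^TX(y-X^T\alpha)\ge 0\big\}=\tfrac12+\tfrac1\pi\arcsin r=\tfrac12-\tfrac1\pi\arcsin\!\Big(\tfrac{t}{\sqrt{\sigma^2+t^2}}\Big).$$
Since $\D_{\U}(\alpha,P_\beta)$ is an infimum over $\U\ni\delta$, the left-hand side is at least $\D_{\U}(\alpha,P_\beta)\ge\tfrac12-\eta$, which rearranges to $\arcsin\big(t/\sqrt{\sigma^2+t^2}\big)\le\pi\eta$.

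Finally, because $\eta<\tfrac5{12}$ forces $\pi\eta<\tfrac{5\pi}{12}<\tfrac\pi2$, I may apply $\sin$ and then solve for $t$ (square, use $1-\sin^2=\cos^2$) to obtain $t\le\sigma\tan(\pi\eta)$. As $\tan$ is convex on $[0,\tfrac{5\pi}{12}]$ and vanishes at $0$, the chord bound gives $\tan(\pi\eta)\le\frac{12\tan(5\pi/12)}{5}\,\eta$, so $\|\Sigma^{1/2}(\alpha-\beta)\|=t\le C\sigma\eta$ with $C=\tfrac{12(2+\sqrt3)}{5}$, as claimed.

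\textbf{Main obstacle.} There is no deep difficulty: the computation is elementary once the right test direction $u=\delta$ is chosen. The only points needing care are (i) the reduction of the orthant probability to a function of the correlation alone, via normalizing the two Gaussian coordinates, and (ii) keeping track of the sign of $r$ so that the depth upper bound comes out as $\tfrac12$ minus a positive, increasing function of $t/\sigma$. The numerical constant $\tfrac5{12}$ plays no structural role; it merely keeps $\pi\eta$ bounded away from $\tfrac\pi2$ so that both the inversion of $\arcsin$ and the convexity bound on $\tan$ yield a clean estimate linear in $\eta$.
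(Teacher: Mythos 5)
Your proof is correct. Both you and the paper hinge on the same key move --- using $\alpha-\beta\in\U$ to test the infimum defining $\D_{\U}(\alpha,P_\beta)$ at the single direction $u=\alpha-\beta$, which reduces everything to a univariate question about $t=\|\Sigma^{1/2}(\alpha-\beta)\|/\sigma$ --- but you solve that univariate problem differently. The paper conditions on $X$, writes the probed probability as $1-\mathbb{E}\Phi(t|Z|)$ with $Z\sim N(0,1)$, shows $g(t)=\mathbb{E}\Phi(t|Z|)$ is increasing with $g(4)>11/12$ (this is where $\eta<5/12$ enters, to certify $t\leq 4$), and then lower-bounds $g(t)-\tfrac12\geq c\,t$ with $c=\phi(4)\,\mathbb{E}\min\{1,|Z|\}$. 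You instead evaluate the same probability exactly via Sheppard's orthant formula for the bivariate Gaussian pair $(\delta^TX,\sigma z-\delta^TX)$, obtaining $\tfrac12-\tfrac1\pi\arcsin\bigl(t/\sqrt{1+t^2}\bigr)=\tfrac12-\tfrac1\pi\arctan t$, and invert; the two computations are literally consistent since $\mathbb{E}\Phi(t|Z|)=\tfrac12+\tfrac1\pi\arctan t$. Your route buys a closed form, an explicit constant $C=12(2+\sqrt3)/5$, and makes transparent that the threshold $5/12$ is not structural (any $\eta$ bounded away from $1/2$ suffices). What the paper's cruder bounding template buys is reusability: the same skeleton (monotone $g$, a threshold step, a linear lower bound built from a density minimum and $\mathbb{E}\min\{1,\sqrt{Y}\}$) is applied verbatim in the multivariate curvature result, Lemma~\ref{lem:curve-m}, and in the elliptical-error extension, Theorem~\ref{thm:EC}, where no exact orthant identity is available. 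Two bits of housekeeping for your write-up: besides $\alpha=\beta$, also dispose of the degenerate case $\Sigma^{1/2}(\alpha-\beta)=0$ with $\alpha\neq\beta$ (the conclusion is then trivial), and note that $\sigma>0$ gives $|r|<1$, so the pair is nondegenerate and the boundary event $\{AB=0\}$ has probability zero, making the choice of $\geq$ versus $>$ immaterial.
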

\begin{proof}
By the definition of the depth function, we have
$$\D_{\U}(\alpha,P_{\beta})=1-\sup_{u\in\U}\mathbb{E}\Phi\left(\frac{u^TXX^T(\alpha-\beta)}{\sigma|u^TX|}\right),$$
where $\Phi(\cdot)$ is the cumulative distribution function of $N(0,1)$.
Together with the condition $\D_{\U}(\alpha,P_{\beta})\geq\frac{1}{2}-\eta$, we obtain
$$\sup_{u\in\U}\mathbb{E}\Phi\left(\frac{u^TXX^T(\alpha-\beta)}{\sigma|u^TX|}\right)-\Phi(0)\leq \eta.$$
Since $\alpha-\beta\in\U$, we have
$$\mathbb{E}\Phi\left(\frac{|X^T(\alpha-\beta)|}{\sigma}\right)-\Phi(0)\leq \eta.$$
For $Z\sim N(0,1)$, consider the function $g(t)=\mathbb{E}\Phi(t|Z|)$. It is easy to check that $g(t)$ is increasing in $t$. Since $g(4)>11/12$, the fact that $g(t)\leq 1/2+\eta$ for some $\eta<5/12$ implies that $t\leq 4$. The definition of $g(t)$ implies that
$$g(t)-\frac{1}{2}=\mathbb{E}\Phi(t|Z|)-\Phi(0)\geq \phi(t)\mathbb{E}\min\{t,t|Z|\}\geq t\phi(4)\mathbb{E}\min\{1,|Z|\},$$
where $\phi(\cdot)$ is the density function of $N(0,1)$.
Therefore,
$$\mathbb{E}\Phi\left(\frac{|X^T(\alpha-\beta)|}{\sigma}\right)-\Phi(0)=g\left(\frac{\|\Sigma^{1/2}(\alpha-\beta)\|}{\sigma}\right)-\frac{1}{2}\geq c \frac{\|\Sigma^{1/2}(\alpha-\beta)\|}{\sigma},$$
where $c=\phi(4)\mathbb{E}\min\{1,|Z|\}$. This leads to the conclusion
$$\|\Sigma^{1/2}(\alpha-\beta)\|\leq c^{-1}\sigma\eta.$$
Thus, the proof is complete.
\end{proof}

\begin{lemma}\label{lem:curve-m}
Let $P_{B}$ denote the joint distribution of $(X,Y)\in\mathbb{R}^p\times\mathbb{R}^m$ specified by $X\sim N(0,\Sigma)$ and $Y|X\sim N(B^TX,\sigma^2I_m)$. For any $A\in\mathbb{R}^{p\times m}$ such that $A-B\in\U$, as long as $\D_{\U}(A,P_B)\geq \frac{1}{2}-\eta$ for some $\eta<\frac{3}{20}$, we have
$$\sqrt{\Tr((A-B)^T\Sigma(A-B))}\leq C\sigma\eta,$$
where $C>0$ is  some universal constant.
\end{lemma}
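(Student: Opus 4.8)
The argument follows the three moves of the proof of Lemma~\ref{lem:curve}, with the scalar $|Z|$ there replaced by the Euclidean norm of a Gaussian vector. First I reduce the depth condition to a statement about a single Gaussian. Conditioning on $X$ and using $Y\mid X\sim N(B^TX,\sigma^2 I_m)$, the scalar $\iprod{U^TX}{Y-A^TX}$ is, given $X$, Gaussian with mean $-\iprod{U^TX}{(A-B)^TX}$ and variance $\sigma^2\|U^TX\|^2$, so
$$\D_{\U}(A,P_B)=1-\sup_{U\in\U}\mathbb{E}\,\Phi\!\left(\frac{\iprod{U^TX}{(A-B)^TX}}{\sigma\|U^TX\|}\right).$$
Because $A-B\in\U$, I may take $U=A-B$ in the supremum, which collapses the ratio to $\|(A-B)^TX\|/\sigma$; hence $\D_{\U}(A,P_B)\ge\frac{1}{2}-\eta$ forces $\mathbb{E}[\Phi(\|W\|/\sigma)-\Phi(0)]\le\eta$, where $W:=(A-B)^TX\sim N(0,M)$ with $M:=(A-B)^T\Sigma(A-B)$. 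Since $\Tr M=\Tr((A-B)^T\Sigma(A-B))$, the goal becomes $\sqrt{\Tr M}\le C\sigma\eta$.

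\textbf{Confining the scale.} Put $t=\sqrt{\Tr M}/\sigma$. From $\mathbb{E}\|W\|^2=\Tr M$ and $\mathbb{E}\|W\|^4=2\fnorm{M}^2+(\Tr M)^2\le 3(\Tr M)^2$, Cauchy--Schwarz together with Jensen give the first-moment bound $\mathbb{E}\|W\|\ge 3^{-3/4}\sqrt{\Tr M}$, which substitutes for the trivial remark $\mathbb{E}|Z|>0$ in Lemma~\ref{lem:curve}. The same fourth-moment bound feeds Paley--Zygmund on $\|W\|^2$: for any $\theta\in(0,1)$, $\mathbb{P}\{\|W\|/\sigma\ge\sqrt\theta\,t\}\ge (1-\theta)^2/3$, so $\mathbb{E}[\Phi(\|W\|/\sigma)-\Phi(0)]\ge\frac{(1-\theta)^2}{3}(\Phi(\sqrt\theta\,t)-\Phi(0))$. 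Taking $\theta$ small and then $T_0$ large (so $\sqrt\theta\,T_0$ is large), the right side exceeds $\frac{3}{20}$ whenever $t\ge T_0$ --- possible since the bound approaches $\frac{1}{6}>\frac{3}{20}$ as $\theta\downarrow 0$ --- which contradicts $\eta<\frac{3}{20}$. Hence $t\le T_0$ for an absolute constant $T_0$; this is the analogue of the ``$t<4$'' step.

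\textbf{Linear lower bound and finish.} Since $\phi$ decreases on $[0,\infty)$, $\Phi(x)-\Phi(0)\ge\phi(a)\min(x,a)$ for every $a>0$ and $x\ge 0$. Choosing $a=2\cdot 3^{3/4}T_0$ and using the pointwise inequality $(x-a)_+\le x^2/a$, one gets $\mathbb{E}\min(\|W\|/\sigma,a)\ge\mathbb{E}\|W\|/\sigma-\mathbb{E}\|W\|^2/(\sigma^2a)\ge(3^{-3/4}-T_0/a)\,t=\frac{1}{2}\,3^{-3/4}\,t$, the last step using $\mathbb{E}\|W\|^2/\sigma^2=t^2\le T_0 t$. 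Therefore $\eta\ge\mathbb{E}[\Phi(\|W\|/\sigma)-\Phi(0)]\ge\phi(a)\cdot\frac{1}{2}\,3^{-3/4}\,t$, i.e. $\sqrt{\Tr M}=\sigma t\le C\sigma\eta$ with $C=2\cdot 3^{3/4}/\phi(a)$ a (large) absolute constant --- the assertion.

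\textbf{Main obstacle.} The genuinely new difficulty, compared with Lemma~\ref{lem:curve}, is that $\|W\|$ is not a deterministic multiple of a fixed random variable, so the clean monotone reparametrization $g(t)=\mathbb{E}\Phi(t|Z|)$ is unavailable. Everything instead hangs on two matching moment estimates for $\|W\|$: the fourth-moment anti-concentration that forces $\|W\|/\sigma$ to be large once $\Tr M$ is large, and the first-moment lower bound $\mathbb{E}\|W\|\gtrsim\sqrt{\Tr M}$. Checking that the ensuing numerology actually clears the threshold $\frac{3}{20}$ is the fussiest point, and it is also where the specific constant in the hypothesis gets used.
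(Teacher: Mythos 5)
Your proof is correct and follows essentially the same route as the paper's: reduce via the choice $U=A-B$ to $\mathbb{E}\bigl[\Phi(\|(A-B)^TX\|/\sigma)-\Phi(0)\bigr]\leq\eta$, use the fourth-moment bound $\mathbb{E}\|(A-B)^TX\|^4\leq 3\bigl(\Tr((A-B)^T\Sigma(A-B))\bigr)^2$ together with anti-concentration and the hypothesis $\eta<\tfrac{3}{20}$ to cap $t=\sqrt{\Tr((A-B)^T\Sigma(A-B))}/\sigma$ by an absolute constant, and then linearize $\Phi$ near $0$ to obtain a lower bound proportional to $t$. The only differences are bookkeeping (Paley--Zygmund with generic $\theta,T_0$ and the reverse-H\"older first-moment bound with $\min(x,a)\geq x-x^2/a$, versus the paper's $g(t)=\mathbb{E}\Phi(t\sqrt{Y})$ with $t\leq 4$ and $\mathbb{E}\min\{1,\sqrt{Y}\}\geq\tfrac{1}{2}\mathbb{P}(Y>\tfrac14)$), yielding the same conclusion with a different universal constant.
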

\begin{proof}
By the definition of the depth function, we have
$$\D_{\U}(A,P_B)=1-\sup_{U\in\U}\mathbb{E}\Phi\left(\sigma^{-1}\iprod{\frac{U^TX}{\|U^TX\|}}{(A-B)^TX}\right),$$
where $\Phi(\cdot)$ is the cumulative distribution function of $N(0,1)$. Together with the condition $\D_{\U}(A,P_B)\geq \frac{1}{2}-\eta$, we obtain
$$\sup_{U\in\U}\mathbb{E}\Phi\left(\sigma^{-1}\iprod{\frac{U^TX}{\|U^TX\|}}{(A-B)^TX}\right)-\Phi(0)\leq\eta.$$
Sime $A-B\in\U$, we have
\begin{equation}
\mathbb{E}\Phi\left(\frac{\|(A-B)^TX\|}{\sigma}\right)-\Phi(0)\leq\eta.\label{eq:depth-bound}
\end{equation}
Consider the random variable $Y=\frac{\|(A-B)^TX\|^2}{\Tr((A-B)^T\Sigma(A-B))}$. We need a lower bound for the probability $\mathbb{P}(Y>c)$. By Cauchy-Schwarz inequality, we have
$$\mathbb{E}Y\leq c+\mathbb{E}Y\mathbb{I}\{Y>c\}\leq c+\sqrt{\mathbb{E}Y^2}\sqrt{\mathbb{P}(Y>c)}.$$
This leads to the inequality
\begin{equation}
\sqrt{\mathbb{P}(Y>c)} \geq \frac{\mathbb{E}Y-c}{\sqrt{\mathbb{E}Y^2}}.\label{eq:lower-tail}
\end{equation}
Thus, we need a lower bound for $\mathbb{E}Y$ and an upper bound for $\mathbb{E}Y^2$. It is easy to see that $\mathbb{E}Y=1$. To bound $\mathbb{E}Y^2$, we write
$$\|(A-B)^TX\|^2=\|(A-B)^T\Sigma^{1/2}Z\|^2=\sum_{j=1}^m|K_j^TZ|^2,$$
where $Z\sim N(0,I_p)$. Thus,
$$\Tr((A-B)^T\Sigma(A-B))=\sum_{j=1}^m\|K_j\|^2.$$
Therefore,
\begin{eqnarray*}
\mathbb{E}\|(A-B)^TX\|^4 &=& \sum_{j=1}^m\sum_{l=1}^m\mathbb{E}|K_j^TZ|^2|K_l^TZ|^2 \\
&=& \sum_{j=1}^m\sum_{l=1}^m\|K_j\|^2\|K_l\|^2\mathbb{E}\frac{|K_j^TZ|^2}{\|K_j\|^2}\frac{|K_l^TZ|^2}{\|K_l\|^2} \\
&\leq& 3\sum_{j=1}^m\sum_{l=1}^m\|K_j\|^2\|K_l\|^2 \\
&=& 3\left(\sum_{j=1}^m\|K_j\|^2\right)^2.
\end{eqnarray*}
Hence,
$$\mathbb{E}Y^2= \frac{\mathbb{E}\|(A-B)^TX\|^4}{\left(\sum_{j=1}^m\|K_j\|^2\right)^2}\leq 3.$$
The inequality (\ref{eq:lower-tail}) leads to
$$\mathbb{P}\left(Y>\frac{1}{4}\right) \geq \frac{3}{16}.$$
Now we define the function
$$g(t)=\mathbb{E}\Phi\left(t\frac{\|(A-B)^TX\|}{\sqrt{\Tr((A-B)^T\Sigma(A-B))}}\right)=\mathbb{E}\Phi(t\sqrt{Y}).$$
It is easy to check that $g(t)$ is increasing in $t$. Moreover,
\begin{eqnarray*}
g(4) &=& \mathbb{E}\Phi(4\sqrt{Y}) \\
&=& \mathbb{E}\Phi(4\sqrt{Y})\mathbb{I}\left\{Y>\frac{1}{4}\right\}+\mathbb{E}\Phi(4\sqrt{Y})\mathbb{I}\left\{Y\leq\frac{1}{4}\right\} \\
&\geq& \Phi(2)\mathbb{P}\left(Y>\frac{1}{4}\right)+\Phi(0)\mathbb{P}\left(Y\leq\frac{1}{4}\right) \\
&=& \left(\Phi(2)-\Phi(0)\right)\mathbb{P}\left(Y>\frac{1}{4}\right)+\frac{1}{2} \\
&\geq& \frac{1}{2}+\frac{3}{20}.
\end{eqnarray*}
Therefore, $g(t)\leq\frac{1}{2}+\eta$ for some $\eta<\frac{3}{20}$ implies that $t\leq 4$. The definition of $g(t)$ implies that
$$g(t)-\frac{1}{2}=\mathbb{E}\Phi(t\sqrt{Y})-\Phi(0)\geq t\phi(t)\mathbb{E}\min\{1,\sqrt{Y}\}\geq t\phi(4)\mathbb{E}\min\{1,\sqrt{Y}\},$$
where $\phi(\cdot)$ is the density function of $N(0,1)$.
Finally, we need to lower bound $\mathbb{E}\min\{1,\sqrt{Y}\}$. We have
\begin{eqnarray*}
\mathbb{E}\min\{1,\sqrt{Y}\} &\geq& \frac{1}{2}\mathbb{P}\left(\min\{1,\sqrt{Y}\}>\frac{1}{2}\right) \\
&\geq& \frac{1}{2}\mathbb{P}\left(Y>\frac{1}{4}\right) \\
&\geq& \frac{3}{32}.
\end{eqnarray*}
Hence,
\begin{eqnarray*}
\mathbb{E}\Phi\left(\frac{\|(A-B)^TX\|}{\sigma}\right)-\Phi(0) &=& g\left(\frac{\sqrt{\Tr((A-B)^T\Sigma(A-B))}}{\sigma}\right)-\frac{1}{2} \\
&\geq& c\frac{\sqrt{\Tr((A-B)^T\Sigma(A-B))}}{\sigma},
\end{eqnarray*}
where $c=\frac{3\phi(4)}{32}$. Using (\ref{eq:depth-bound}), we obtain the desired conclusion, and the proof is complete.
\end{proof}

\subsection{Proofs of Main Results}\label{sec:pf-main}

This section gives proofs of Theorems \ref{thm:nonpar}, \ref{thm:sparse}, \ref{thm:GGM}, \ref{thm:trace}, \ref{thm:multiple}, \ref{thm:group} and \ref{thm:rrr} as well as Theorem \ref{thm:EC}.
For i.i.d. data $\{(X_i,Y_i)\}_{i=1}^n$ from a contaminated distribution $(1-\epsilon)P+\epsilon Q$, it can be written as $\{(X_i^P,Y_i^P)\}_{i=1}^{n_1}\cup\{(X_i^Q,Y_i^Q)\}_{i=1}^{n_2}$. Marginally, we have $n_2\sim\text{Binomial}(n,\epsilon)$ and $n_1=n-n_2$. Conditioning on $n_1$ and $n_2$, $\{(X_i^P,Y_i^P)\}_{i=1}^{n_1}$ are i.i.d. from $P$ and $\{(X_i^Q,Y_i^Q)\}_{i=1}^{n_2}$ are i.i.d. from $Q$. The following lemma controls the ratio $n_2/n_1$.
\begin{lemma} \label{lem:ratio}
Assume $\epsilon<1/2$.
For any $\delta>0$ satisfying $n^{-1}\log(1/\delta)<c$ for some sufficiently small constant $c$, we have
\begin{equation}
\frac{n_2}{n_1}\leq \frac{\epsilon}{1-\epsilon}+C\sqrt{\frac{\log(1/\delta)}{n}},\label{eq:ratio}
\end{equation}
with probability at least $1-\delta$, where $C>0$ is a universal constant.
\end{lemma}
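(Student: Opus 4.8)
The plan is to reduce the claim to a standard binomial tail bound followed by an elementary piece of algebra. First I would write $n_2=\sum_{i=1}^n \xi_i$, where $\xi_1,\dots,\xi_n$ are i.i.d. $\text{Bernoulli}(\epsilon)$ random variables, so that $\mathbb{E}n_2=n\epsilon$. Applying Hoeffding's inequality to the bounded summands $\xi_i\in[0,1]$ gives, for every $t>0$,
$$\mathbb{P}\left(n_2\geq n\epsilon+t\right)\leq \exp\left(-\frac{2t^2}{n}\right).$$
Taking $t=\sqrt{\tfrac12 n\log(1/\delta)}$ shows that with probability at least $1-\delta$ we have $n_2\leq n\epsilon+t$, and hence $n_1=n-n_2\geq n(1-\epsilon)-t$.

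On this event I would then bound the ratio directly. Since $\epsilon<1/2$, we have $1-\epsilon>1/2$; and since $n^{-1}\log(1/\delta)<c$ for a sufficiently small absolute constant $c$, the quantity $t/n=\sqrt{\tfrac{1}{2}n^{-1}\log(1/\delta)}$ is at most $1/4$, so that $n_1\geq n(1-\epsilon)-t\geq n/2-t\geq n/4>0$; in particular the ratio is well-defined. A direct computation then gives
$$\frac{n_2}{n_1}-\frac{\epsilon}{1-\epsilon}\;\leq\;\frac{n\epsilon+t}{n(1-\epsilon)-t}-\frac{\epsilon}{1-\epsilon}\;=\;\frac{t}{(1-\epsilon)\big(n(1-\epsilon)-t\big)}\;\leq\;\frac{t}{(1/2)(n/4)}\;=\;\frac{8t}{n},$$
and substituting $t=\sqrt{\tfrac12 n\log(1/\delta)}$ yields $\tfrac{n_2}{n_1}\leq \tfrac{\epsilon}{1-\epsilon}+C\sqrt{\log(1/\delta)/n}$ with $C=8/\sqrt2$, which is exactly (\ref{eq:ratio}).

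There is no serious obstacle here; the only point requiring care is converting the additive deviation bound on $n_2$ into a bound on the ratio $n_2/n_1$, which forces one to keep the denominator $n_1$ bounded away from $0$ — and this is precisely where the hypothesis $n^{-1}\log(1/\delta)<c$ is used. One could instead invoke Bernstein's inequality to obtain a sharper bound reflecting the variance $n\epsilon(1-\epsilon)$, but since the lemma only asserts the crude rate $\sqrt{\log(1/\delta)/n}$, Hoeffding's inequality already suffices.
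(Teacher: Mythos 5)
Your proof is correct: the Hoeffding bound on the binomial count $n_2$, the choice $t=\sqrt{\tfrac12 n\log(1/\delta)}$, and the algebraic identity $\frac{n\epsilon+t}{n(1-\epsilon)-t}-\frac{\epsilon}{1-\epsilon}=\frac{t}{(1-\epsilon)(n(1-\epsilon)-t)}$ all check out, and you use the hypotheses $\epsilon<1/2$ and $n^{-1}\log(1/\delta)<c$ exactly where they are needed, namely to keep the denominator of order $n$. The paper states this lemma without proof, and your argument is precisely the standard binomial-concentration reasoning it implicitly relies on, so there is nothing to reconcile.
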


Now we are ready to prove the main results.

\begin{proof}[Proof of Theorem \ref{thm:nonpar}]
By Lemma \ref{lem:ratio}, we decompose the data $\{(X_i,y_i)\}_{i=1}^n=\{(X_i^P,y_i^P)\}_{i=1}^{n_1}\cup\{(X_i^Q,y_i^Q)\}_{i=1}^{n_2}$. The following analysis is on the intersection of the events of (\ref{eq:ratio}) and Proposition \ref{prop:nonpar} that holds with probability at least $1-2\delta$.
For any $f=\sum_{j=1}^{\infty}\beta_j\phi_j\in S_{\alpha}(M)$, there exists some $\beta_{[k]}\in\U_k$, such that for the corresponding $f_{[k]}$,
\begin{equation}
\|f_{[k]}-f\|^2=\|\beta_{[k]}-\beta\|^2\leq C_1k^{-2\alpha},\label{eq:bias}
\end{equation}
for some constant $C_1>0$ that only depends on $\alpha$ and $M$. Recall the notation $P_f$. By the definition of the depth function and Proposition \ref{prop:nonpar}, we have
\begin{eqnarray}
\label{eq:dU1}\D_{\U_k}(\hat{\beta},P_{f}) &\geq& \D_{\U_k}(\hat{\beta},\{(X_i^P,y_i^P)\}_{i=1}^{n_1})-C\sqrt{\frac{k}{n_1}}-\sqrt{\frac{\log(1/\delta)}{2n_1}} \\
\label{eq:bf1}&\geq& \frac{n}{n_1}\D_{\U_k}(\hat{\beta},\{(X_i,y_i)\}_{i=1}^{n})-\frac{n_2}{n_1}-C\sqrt{\frac{k}{n_1}}-\sqrt{\frac{\log(1/\delta)}{2n_1}} \\
\label{eq:defGamma}&\geq& \frac{n}{n_1}\D_{\U_k}(\beta_{[k]},\{(X_i,y_i)\}_{i=1}^{n})-\frac{n_2}{n_1}-C\sqrt{\frac{k}{n_1}}-\sqrt{\frac{\log(1/\delta)}{2n_1}} \\
\label{eq:bf2}&\geq& \D_{\U_k}(\beta_{[k]},\{(X_i^P,y_i^P)\}_{i=1}^{n_1})-\frac{n_2}{n_1}-C\sqrt{\frac{k}{n_1}}-\sqrt{\frac{\log(1/\delta)}{2n_1}} \\
\label{eq:dU2}&\geq& \D_{\U_k}(\beta_{[k]},P_f)-\frac{n_2}{n_1}-2C\sqrt{\frac{k}{n_1}}-2\sqrt{\frac{\log(1/\delta)}{2n_1}}.
\end{eqnarray}
The inequalities (\ref{eq:dU1}) and (\ref{eq:dU2}) are by Proposition \ref{prop:nonpar}. The inequalities (\ref{eq:bf1}) and (\ref{eq:bf2}) are due to the property of depth function that
$$n_1\mathcal{D}_{\mathcal{U}_k}(\beta,\{Y_i\}_{i=1}^{n_1})\geq n\mathcal{D}_{\mathcal{U}_k}(\beta,\{X_i\}_{i=1}^{n})-n_2\geq n_1\mathcal{D}_{\mathcal{U}_k}(\beta,\{Y_i\}_{i=1}^{n_1})-n_2,$$
for any $\beta\in\U_k$. The inequality (\ref{eq:defGamma}) is by the definition of $\hat{\beta}$. Moreover,
\begin{eqnarray}
\nonumber && \left|\D_{\U_k}(\beta_{[k]},P_f)-\D_{\U_k}(\beta,P_f)\right| \\
\nonumber &\leq& \sup_{u\in\U_k}\left|P_f\left(u^TX(y-X^T\beta)\geq 0\right)-P_f\left(u^TX(y-X^T\beta_{[k]})\geq 0\right)\right| \\
\label{eq:usePhi}&=& \sup_{u\in\U_k}\left|\mathbb{E}\Phi\left(\frac{u^TXX^T(\beta_{[k]}-\beta)}{|u^TX|}\right)-\Phi(0)\right| \\
\nonumber &\leq& \sqrt{\frac{1}{2\pi}}\mathbb{E}\left|X^T(\beta_{[k]}-\beta)\right| \\
\label{eq:useunif}&\leq& \sqrt{\frac{1}{2\pi}}\sqrt{\mathbb{E}(f_{[k]}(x)-f(x))^2} \\
\nonumber &=& \sqrt{\frac{1}{2\pi}}\|f_{[k]}-f\| \\
\label{eq:usesob}&\leq& C_1^{1/2}\sqrt{\frac{1}{2\pi}}k^{-\alpha},
\end{eqnarray}
where $\Phi(\cdot)$ is the cumulative distribution function of $N(0,1)$ in (\ref{eq:usePhi}) and $x\sim\text{Unif}[0,1]$ in (\ref{eq:useunif}). The inequality (\ref{eq:usesob}) is due to (\ref{eq:bias}). Therefore,
$$\D_{\U_k}(\beta_{[k]},P_f)\geq \frac{1}{2}-C_1^{1/2}\sqrt{\frac{1}{2\pi}}k^{-\alpha}.$$
Together with the inequality (\ref{eq:dU2}) and Lemma \ref{lem:ratio}, we have
\begin{equation}
\D_{\U_k}(\hat{\beta},P_{f})\geq \frac{1}{2}-\frac{\epsilon}{1-\epsilon}-C_2\left(\sqrt{\frac{k}{n}}+k^{-\alpha}+\sqrt{\frac{\log(1/\delta)}{n}}\right),\label{eq:startingf}
\end{equation}
with probability at least $1-2\delta$. At this point, we cannot directly use Lemma \ref{lem:curve}, because $\hat{\beta}-\beta\notin\U_k$. A slightly different argument is needed. Starting from (\ref{eq:startingf}), we have
$$\sup_{u\in\U_k}\mathbb{E}\Phi\left(\frac{u^TXX^T(\hat{\beta}-\beta)}{|u^TX|}\right)-\Phi(0)\leq \frac{\epsilon}{1-\epsilon}+C_2\left(\sqrt{\frac{k}{n}}+k^{-\alpha}+\sqrt{\frac{\log(1/\delta)}{n}}\right),$$
where the expectation is only taken over $X$.
The same argument that leads to (\ref{eq:usesob}) gives
$$\sup_{u\in\U_k}\mathbb{E}\Phi\left(\frac{u^TXX^T(\hat{\beta}-\beta_{[k]})}{|u^TX|}\right)-\Phi(0)\leq \frac{\epsilon}{1-\epsilon}+C_3\left(\sqrt{\frac{k}{n}}+k^{-\alpha}+\sqrt{\frac{\log(1/\delta)}{n}}\right).$$
Now, since $\hat{\beta}-\beta_{[k]}\in\U_k$, by the same argument in the proof of Lemma \ref{lem:curve}, we have
$$\|\hat{f}-f_{[k]}\|\leq C_4\left(\epsilon+\sqrt{\frac{k}{n}}+k^{-\alpha}+\sqrt{\frac{\log(1/\delta)}{n}}\right).$$
Using (\ref{eq:bias}) again, we have
$$\|\hat{f}-f\|\leq C_5\left(\epsilon+\sqrt{\frac{k}{n}}+k^{-\alpha}+\sqrt{\frac{\log(1/\delta)}{n}}\right).$$
The choice $k=\ceil{n^{\frac{1}{2\alpha+1}}}$ completes the proof.
\end{proof}

\begin{proof}[Proofs of Theorem \ref{thm:sparse} and Theorem \ref{thm:trace}]
We first give the proof of Theorem \ref{thm:sparse}.
By Lemma \ref{lem:ratio}, we decompose the data $\{(X_i,y_i)\}_{i=1}^n=\{(X_i^P,y_i^P)\}_{i=1}^{n_1}\cup\{(X_i^Q,y_i^Q)\}_{i=1}^{n_2}$. The following analysis is on the intersection of the events of (\ref{eq:ratio}) and Proposition \ref{prop:sparse} that holds with probability at least $1-2\delta$. Recall the notation $P_{\beta}$. Using the same arguments in (\ref{eq:dU1})--(\ref{eq:dU2}), we get
$$\D_{\Theta_{2s}}(\hat{\beta},P_{\beta})\geq \frac{1}{2}-\frac{n_2}{n_1}-2C\sqrt{\frac{s\log\left(\frac{ep}{s}\right)}{n_1}}-2\sqrt{\frac{\log(1/\delta)}{2n_1}}.$$
Lemma \ref{lem:ratio} implies that
\begin{equation}
\D_{\Theta_{2s}}(\hat{\beta},P_{\beta})\geq \frac{1}{2}-\frac{\epsilon}{1-\epsilon}-C_1\left(\sqrt{\frac{s\log\left(\frac{ep}{s}\right)}{n}}+\sqrt{\frac{\log(1/\delta)}{2n}}\right),\label{eq:interface}
\end{equation}
with probability at least $1-2\delta$. Since $\hat{\beta}-\beta\in\Theta_{2s}$, we use Lemma \ref{lem:curve} to deduce (\ref{eq:sparse-pred}). The bounds (\ref{eq:sparse-l2}) and (\ref{eq:sparse-l1}) are direct implications of (\ref{eq:sparse-pred}) by the definition of $\kappa$. Thus, the proof of Theorem \ref{thm:sparse} is complete. The proof of Theorem \ref{thm:trace} follows the same argument, and we do not repeat the details.
\end{proof}

\begin{proof}[Proof of Theorem \ref{thm:GGM}]
We use $\D_1$ to denote the first half of the data and $\D_2$ to denote the second half.
The model $X_j=\beta_{(j)}^TX_{-j}+\xi_j$ is an instance of sparse linear regression in Section \ref{sec:sparse}. Thus, the result of Theorem \ref{thm:sparse} implies that
$$\|\Sigma_{-j,-j}^{1/2}(\beta_{(j)}-\hat{\beta}_{(j)})\|^2\leq C\left(\frac{s\log\left(\frac{ep}{s}\right)}{n}\vee \epsilon^2+\frac{\log(1/\delta)}{n}\right),$$
and
$$\|\hat{\beta}_{(j)}-\beta_{(j)}\|^2_1\leq C\left(\frac{s^2\log\left(\frac{ep}{s}\right)}{n}\vee s\epsilon^2+\frac{s\log(1/\delta)}{n}\right),$$
with probability at least $1-2\delta$. The matrix $\Sigma_{-j,-j}$ is the covariance of $X_{-j}$. Now we study the error of $\wh{\Omega}_{jj}^{-1}$. Conditioning on $\D_1$,
$$X_j-\hat{\beta}_{(j)}^TX_{-j}=(\beta_{(j)}-\hat{\beta}_{(j)})^TX_{-j}+\xi_j\sim (1-\epsilon)N(0,\|\Sigma_{-j,-j}^{1/2}(\beta_{(j)}-\hat{\beta}_{(j)})\|^2+\Omega_{jj}^{-1})+\epsilon Q.$$
Theorem 3.1 of \cite{chen2015robust} implies that
$$|\wh{\Omega}_{jj}^{-1}-\Omega_{jj}^{-1}|^2\leq 2\|\Sigma_{-j,-j}^{1/2}(\beta_{(j)}-\hat{\beta}_{(j)})\|^4+C_1\left(\epsilon^2+\frac{\log(1/\delta)}{n}\right),$$
with probability at least $1-2\delta$. Therefore,
$$|\wh{\Omega}_{jj}^{-1}-\Omega_{jj}^{-1}|^2\leq C_2\left(\epsilon^2+\left(\frac{s\log\left(\frac{ep}{s}\right)}{n}\right)^2+\frac{\log(1/\delta)}{n}\right),$$
with probability at least $1-4\delta$. Combing the bounds above, we have
\begin{eqnarray*}
\|\wh{\Omega}_{-j,j}-\Omega_{-j,j}\|_1^2 &=& \left\|\wh{\Omega}_{jj}\hat{\beta}_{(j)}-\Omega_{jj}\beta_{(j)}\right\|_1^2 \\
&\leq& 2|\wh{\Omega}_{jj}|^2\|\hat{\beta}_{(j)}-\beta_{(j)}\|^2_1 + 2\|\beta_{(j)}\|_1^2|\wh{\Omega}_{jj}-\Omega_{jj}|^2 \\
&\leq& C_3\left(\frac{s^2\log\left(\frac{ep}{s}\right)}{n}\vee s\epsilon^2+\frac{s\log(1/\delta)}{n}\right),
\end{eqnarray*}
with probability at least $1-4\delta$. Therefore,
$$\|\wh{\Omega}_{*j}-\Omega_{*j}\|_1^2\leq C_4\left(\frac{s^2\log\left(\frac{ep}{s}\right)}{n}\vee s\epsilon^2+\frac{s\log(1/\delta)}{n}\right),$$
with probability at least $1-4\delta$. Finally, a union bound argument gives
$$\|\wh{\Omega}-\Omega\|^2_{\ell_1}=\max_{1\leq j\leq p}\|\wh{\Omega}_{*j}-\Omega_{*j}\|_1^2\leq C_4\left(\frac{s^2\log\left(\frac{ep}{s}\right)}{n}\vee s\epsilon^2+\frac{s\log(1/\delta)}{n}\right),$$
with probability at least $1-4p\delta$. Choose $\delta=\exp\left(-C_5(n\epsilon^2+s\log(ep/s))\right)$, and the proof is complete.
\end{proof}

\begin{proof}[Proofs of Theorem \ref{thm:multiple}, Theorem \ref{thm:group} and Theorem \ref{thm:rrr}]
We first state the proof of Theorem \ref{thm:group}. Recall the notation $P_B$. The same argument that leads to (\ref{eq:interface}) gives
$$\D_{\Xi_{2s}}(\wh{B},P_{B})\geq \frac{1}{2}-\frac{\epsilon}{1-\epsilon}-C_1\left(\sqrt{\frac{ms+s\log\left(\frac{ep}{s}\right)}{n}}+\sqrt{\frac{\log(1/\delta)}{2n}}\right).$$
Since $\wh{B}-B\in\Xi_{2s}$, we use Lemma \ref{lem:curve-m} to deduce (\ref{eq:multiple-pred}). The bound (\ref{eq:multiple-frob}) is a direct implication of (\ref{eq:multiple-pred}) by the definition of $\kappa$. This completes the proof of Theorem \ref{thm:multiple}. Setting $s=p$ gives the proof of Theorem \ref{thm:group}. The proof of Theorem \ref{thm:rrr} follows the same argument, and we omit the details.
\end{proof}

\begin{proof}[Proof of Theorem \ref{thm:EC}]
The proof is the same as that of Theorem \ref{thm:multiple}, except that we need to establish a similar curvature result as Lemma \ref{lem:curve-m} for the elliptical distribution. The same argument that leads to (\ref{eq:depth-bound}) gives
$$\mathbb{E}F\left(\frac{\|(A-B)^TX\|}{\|\Gamma^{1/2}(A-B)^TX\|}\|(A-B)^TX\|\right)-F(0)\leq \eta.$$
By the definition $\sigma^2=\opnorm{\Gamma}$, we have
$$\mathbb{E}F\left(\frac{1}{\sigma}\|(A-B)^TX\|\right)-F(0)\leq \eta.$$
Following the proof of Lemma \ref{lem:curve-m}, it is sufficient to show that $g(t)-1/2\geq Ct\mathbb{E}\{1,\sqrt{Y}\}$, where $g(t)=\mathbb{E}F(t\sqrt{Y})$. We outline the main step without repeating all the details that have already been used in the proof of Lemma \ref{lem:curve-m}. The fact that $g(t)\leq \frac{1}{2}+\eta$ for a sufficiently small $\eta$ implies that $t\leq c_1$. Then,
$$g(t)-\frac{1}{2}\geq t\min_{|t|\leq c_1} f(t)\mathbb{E}\min\{1,\sqrt{Y}\}.$$
Under the assumption that $\min_{|t|\leq c_1} f(t)\geq c_2$, the proof is complete.
\end{proof}

\section*{Acknowledgement}

The author thanks Zhao Ren and Haoyang Liu for reading the manuscript and offering insightful suggestions.

\bibliographystyle{plainnat}
\bibliography{Robust}


\end{document}